\begin{document}

{
\newcommand{\hcancel}[5]{%
  \tikz[baseline=(tocancel.base)]{
    \node[inner sep=0pt,outer sep=0pt] (tocancel) {#1};
    \draw[black, line width=0.2mm] ($(tocancel.south west)+(#2,#3)$) -- ($(tocancel.north east)+(#4,#5)$);
  }%
}
\newtheorem{exercise}{Exercise}
\let\Exercisefont\upshape
\def\Exerciseheadfont{\bfseries}

\newcommand{\CCC}{\mathcal{C}}
\newcommand{\OOO}{\mathcal{O}}
\newcommand{\NN}{\mathbb{N}}
\newcommand{\QQ}{\mathbb{Q}}
\newcommand{\RR}{\mathbb{R}}
\newcommand{\TT}{\mathbb{T}}
\newcommand{\ZZ}{\mathbb{Z}}
\newcommand{\CC}{\mathbb{C}}
\newcommand{\aaa}{\mathfrak{a}}
\newcommand{\bbb}{\mathfrak{b}}
\newcommand{\ppp}{\mathfrak{p}}
\newcommand{\qqq}{\mathfrak{q}}
\newcommand{\mmm}{\mathfrak{m}}
\newcommand{\fff}{\mathfrak{f}}
\newcommand{\defeq}{\vcentcolon=}
\newcommand{\defeqv}{\vcentcolon\equiv}
\newcommand{\Sh}{\mathrm{Sh}}
\newcommand{\ab}{\mathrm{ab}}
\newcommand{\op}{\mathrm{op}}
\newcommand{\Set}{\mathrm{Set}}
\newcommand{\Hom}{\mathrm{Hom}}
\newcommand{\Spec}{\mathrm{Spec}}
\newcommand{\Max}{\mathrm{Max}}
\newcommand{\Gal}{\mathrm{Gal}}
\newcommand{\Rad}{\mathrm{Rad}}
\newcommand{\Idl}{\mathrm{Idl}}
\newcommand{\+}{\mathpunct{.}}
\newcommand{\?}{\,{:}\,}
\newcommand{\seq}[1]{\mathrel{\vdash\!\!\!_{#1}}}
\newcommand{\pt}{\mathrm{pt}}
\newcommand{\Pt}{\mathrm{Pt}}
\newcommand{\Loc}{\mathrm{Loc}}
\newcommand{\Top}{\mathrm{Top}}
\newcommand{\ann}{\operatorname{ann}}
\newcommand{\BPIT}{\textsc{bpit}\xspace}
\newcommand{\notnot}{\emph{not not}\xspace}
\newcommand{\negg}{\neg\!\!\!\neg}
\newcommand{\bott}{\bot\!\!\!\!\bot}
\newcommand{\brak}[1]{{\llbracket{#1}\rrbracket}}

\newcommand{\bracketedrefcite}[1]{\citep{#1}}

\title{Proof and Computation}

\chapter*{Generalized spaces for constructive algebra}

\author[I. Blechschmidt]{Ingo Blechschmidt}

\address{Università di Verona \\
Department of Computer Science \\
Strada le Grazie 15 \\
37134 Verona, Italy}

\begin{abstract}
The purpose of this contribution is to give a coherent account of a particular
narrative which links locales, geometric theories, sheaf semantics and
constructive commutative algebra. We are hoping to convey a firm grasp of three
ideas:
(1)~Locales are a kind of space in which opens instead of points are
fundamental.
(2)~Sheaf semantics allows us to explore mathematical objects from custom-tailored
mathematical universes.
(3)~Without loss of generality, any reduced ring is a field.
\end{abstract}
\body

\tableofcontents

\subsection*{Introduction\footnote{
Unlike some other texts on locales or sheaves, this text is set in a
constructive (but impredicative) metatheory. We do not use the law of excluded
middle nor any version of the axiom of choice, and we also do not adopt any
nonclassical principles. We freely use powersets, but more for linguistic
convenience than by necessity.
In particular, the central idea presented in
Section~\ref{sect:appl}, the reduction technique by which we can pretend that any
given reduced ring is a field, is very robust with respect to the metatheory.
Even though we chose a semantic point of view for its derivation, the final
result is a purely syntactical transformation. As it stands, this text could be
formalized in the kind of type theory which is valid in
toposes~\cite{maietti:modular-correspondence} or
in~\textsc{izf}~\cite{crosilla:cst-izf}.}}

\addcontentsline{toc}{section}{Introduction}

\paragraph{Locales.}

The notion of \emph{space} is fundamental to large parts of mathematics. This notion
exists in various flavors, ranging from the basic cartesian spaces~$\RR^n$ to
the more general metric and topological spaces and also including several more
and slightly exotic flavors such as the diffeological spaces from differential topology.

Common to all of the mentioned flavors of space is that \emph{points are their
building blocks}: cartesian, metric, topological and diffeological spaces are,
first and foremost, sets of points. Their spatial structure -- a metric, a
topology or a diffeology -- is additional data to the underlying set:

\begin{quote}
\textbf{Definition.} A \emph{metric space} consists of a set~$X$ of points
together with a map~$d : X \times X \to \RR_{\geq 0}$ satisfying the metric
axioms.

\textbf{Definition.} A \emph{topological space} consists of a set~$X$ of points
together with a set~$\OOO(X) \subseteq P(X)$ of point sets which are deemed
\emph{open} such that (arbitrary, set-indexed) unions and finite intersections
of open sets are open.
\end{quote}

Locales are a further and particularly unique flavor of the notion of space
which turn this classical picture upside down: Locales embrace \emph{opens
instead of points as primitive building blocks}. With locales, points are a
derived concept. In particular, the opens of a locale are not sets of points;
in fact, they need not be sets of anything in peculiar.

Section~\ref{sect:locales} explores the basics of the theory of locales, with a
focus on examples and issues for constructive mathematics. Accessible
introductions to the theory include
Refs.~\bracketedrefcite{johnstone:art,johnstone:point}. The interplay between logic and
geometry is particularly stressed in
Refs.~\bracketedrefcite{vickers:continuity,vickers:locales-toposes}. We recommend
Ref.~\bracketedrefcite{belanger-marquis:pointless} for a historical guide and
Ref.~\bracketedrefcite{picado-pultr:frames-and-locales} for a comprehensive modern
textbook on the subject.

\paragraph{Geometric theories and sheaf models.} The most sensible solution to
the task
\begin{quote}
Determine the continuous complex-valued functions~$f$ with~$f(z)^2 = z$.
\end{quote}
is not the single set
\[ \{ f : \CC \to \CC \,|\, \text{$f$ is continuous and $f(z)^2 = z$ for all~$z \in \CC$} \}. \]
Rather, since the solvability of this functional equation varies with the
domain on which~$f$ should be defined (having no global solution defined on all
of~$\CC$ but two solutions on small disks not containing the origin), we should
consider the family
\[ (\{ f : U \to \CC \,|\, \text{$f$ is continuous and $f(z)^2 = z$ for all~$z
\in \CC$} \})_{U \in \OOO(\CC)} \]
of sets as the solution (or perhaps, since no solution was actually given, as
an apt description) of the task.

If we are building a prime ideal~$\ppp$ step by step, adding elements to it
when the need arises, the most sensible answer to the question
\begin{quote}
Is~$x$ contained in the prime ideal~$\ppp$?
\end{quote}
is not a single truth value, but a family of truth values, parametrized by the
possible stages of construction of~$\ppp$.

Somewhat surprisingly, these two examples can be treated by a single theory,
the \emph{theory of sheaves}. Sheaves help to organize such families in an
efficient manner and reify them as single coherent entities; the first family
is (part of) a \emph{sheaf of sets} and the second family is (part of) a
\emph{sheaf of truth values}. Sheaves will be the subject of
Sections~\ref{sect:sheaves}--\ref{sect:sheaf-semantics}.

Sheaves interact particularly well with \emph{geometric theories}, which we
review in Section~\ref{sect:geometric-theories}. Firstly, sheaves allow us to
generalize our notion of a \emph{model} of a geometric theory. For instance, a
set-based model of the geometric theory of rings is just an ordinary ring; but
now we can also consider sheaves of rings as further models.

Some geometric theories only really unfold in the world of sheaves. For
instance, the geometric theory of surjections~$\NN \twoheadrightarrow \RR$
reviewed on page~\pageref{item:theory-of-surjections} does not have any
set-based models, but this theory is consistent and it does have nontrivial sheaf
models. For the purposes of constructive algebra, sheaf models of some geometric
theories play a vital role, as we will discuss in Section~\ref{sect:appl}.

Secondly, geometric theories allow us to efficiently construct the spaces
(locales) on which we want to study sheaves. This is because a fundamental
feature of geometric theories (in contrast to, say, arbitrary first-order
theories) is that their models naturally organize to form a space, the
space (locale) of models reviewed in Section~\ref{sect:presenting-frames}.


\paragraph{Applications in constructive algebra.}
\label{par:appl-constr-alg}
Commutative algebra abounds with techniques to reduce given situations to
simpler ones, for instance passing to a quotient or passing to a
localization. These techniques facilitate short and elegant proofs, such as the
following. (Section~\ref{sect:algebraic-preliminaries} contains some algebraic
preliminaries.)

\begin{quote}
\textbf{Theorem.} Let~$M$ be an injective matrix with more columns than rows
over a reduced ring~$A$. Then~$1 = 0$ in~$A$.

\textbf{Proof.} \uwave{Assume not.} Then there is a \uwave{minimal
prime ideal} $\ppp \subseteq A$. Since localization is exact, the matrix~$M$ is also injective when
considered as a matrix over the stalk~$A_\ppp$. Since~$A_\ppp$ is a
\uwave{field}, this is a contradiction to basic linear algebra. \qed
\end{quote}

However, these reduction techniques typically require \emph{transfinite
methods}. The displayed proof appeals to the transfinite four times:
\begin{enumerate}
\item The proof proceeds by contradiction, hence requires the law of excluded
middle.  
\item The proof then requires a minimal prime ideal. Such an ideal can be
obtained in two steps: First, the Boolean Prime Ideal
Theorem~\BPIT is invoked to obtain some prime ideal. Then
Zorn's lemma fabricates a minimal one.\footnote{Standard textbooks prove the statement ``any nontrivial
ring has a prime ideal'' by using Zorn's lemma, which
in the presence of the law of excluded middle is equivalent to the full axiom
of choice; however, the statement is actually equivalent to the weaker Boolean
Prime Ideal Theorem~\cite{scott:bpit,banaschewski-harting:lattice-aspects}.
The combined statement ``any nontrivial ring has a minimal prime ideal'' is equivalent to
the full axiom of choice~\cite{savin:minimal-prime-ideals}.}
\item Finally, the proof exploits that the stalks of reduced rings at minimal
prime ideals are fields. This requires two further invocations
of~\BPIT.\footnote{By~\BPIT, the intersection of all prime ideals of~$A_\ppp$
is its nilradical. Since~$A_\ppp$ is reduced, the nilradical is the zero ideal.
Since the prime ideals of~$A_\ppp$ are in bijection with those prime ideals
of~$A$ which are contained in~$\ppp$, the ring~$A_\ppp$ has exactly one prime
ideal. By~\BPIT, any noninvertible element of~$A_\ppp$ is contained in some
prime ideal, hence in the zero ideal. Thus~$A_\ppp$ is a field.}
\end{enumerate}

This state of affairs is not satisfactory. A statement as simple as the
displayed example should admit an explicit calculational proof, presenting a
concrete method for transforming the given conditional equations expressing
injectivity into the equation~$1 = 0$ without any appeals to the transfinite.

The completeness theorem for coherent logic~\cite[Corollary~D1.5.10]{johnstone:elephant} even
gives an a priori reason why (for a given matrix size) a finitary proof has to
exist. However, since the proof of the completeness theorem itself uses~\BPIT
(and indeed is equivalent to it), it does not give any indication how such a
proof could be found. (Still, beautiful constructive proofs are known and
presented, for instance, in a celebrated short note by Fred Richman on nontrivial
uses of trivial rings~\cite{richman:trivial-rings} and in the recent
textbook~\cite{lombardi-quitte:constructive-algebra} by Henri Lombardi and
Claude Quitté on constructive commutative algebra.)

The key issue with the usual reduction techniques in commutative algebra is
their dependence on \emph{ideal objects} such as prime or maximal ideals. In
general, those objects can only be obtained by transfinite methods. However, in
practice many arguments do not actually require completed ideal objects:
Their computational core applies just as well to finite approximations of
these ideal objects.

It is then a challenging task in mathematical logic to devise efficient means to
extract this obscured constructive content in a mostly mechanical manner. We
envision to reinterpret any given classical proof employing ideal objects in a
constructive fashion, realizing ideal objects as convenient fictions.

Locales and sheaf models contribute to this program by providing new reduction
techniques for commutative algebra. These techniques have a similar effect as the
classical reduction techniques, but are powered by purely constructive
underpinnings. In a nutshell, the idea is as follows.

\begin{enumerate}
\item Instead of replacing a given ring with another (for instance a ring~$A$ with
one of its stalks~$A_\ppp$ at a minimal prime ideal), we replace a given ring
with a \emph{sheaf of rings}.
\item We then maintain the convenient illusion that we are working with a plain old ring
instead of a sheaf by employing the sheaf semantics.
\end{enumerate}

In some cases, these new techniques go even beyond their classical role models.
For instance, while a module need not be free ``on a dense open'' if all its
stalks at minimal prime ideals are, a module~$M$ is free in that sense if its
mirror image~$M^\sim$ is \notnot free. This observation is the basis for a
new proof of Grothendieck's generic freeness lemma, reviewed in
Section~\ref{sect:example-applications}, which is shorter and arguably more
perspicuous than the previously known classical proofs.

\paragraph{Where to go from here.} The themes which this contribution touches
on are embedded into a larger context, and we invite readers to follow those
interesting tangents.

Regarding spaces, the notion of locales can be modified in several ways.
Firstly, there are \emph{sites} and closely related
\emph{toposes}.~\cite{leinster:introduction,moerdijk-maclane:sheaves-logic}.
Given two opens~$U$ and~$V$ of a locale, there is only a truth value as to
whether~$U$ is contained in~$V$; with sites and toposes, there can be a
nontrivial \emph{set} of ways that~$U$ is contained in~$V$. In an orthogonal
direction, there is the notion of a \emph{formal
topology}~\cite{sambin:some-points,sambin:ifs}. These implement the idea of
pointfree topology in predicative settings. In this context also basics of
algebraic geometry have been
developed~\cite{schuster:formal-zariski,cls:spectral-schemes,cls:projective-spectrum}.

Regarding sheaf semantics, Section~\ref{sect:sheaf-semantics} only presents
the first-order case, but the theory can be extended to include unbounded
quantification~\cite{shulman:stack-semantics}, function types and power types
and even a fully-fledged type theory~\cite{maietti:modular-correspondence}. The
sheaf semantics of appropriate toposes has been used to give \emph{synthethic
accounts} of several fields, such as differential geometry~\cite{kock:sdg},
domain theory~\cite{hyland:synthetic-domain-theory} and computability
theory~\cite{bauer:synthetic-computability-theory}.

Regarding constructive algebra, there are by now several
textbooks~\cite{mines-richman-ruitenburg:constructive-algebra,lombardi-quitte:constructive-algebra,yengui:constructive-commutative-algebra}
on constructive algebra available and there is a vast literature on
the intriguing subject of constructivizing classical commutative algebra.
Starting points include
Refs.~\bracketedrefcite{coste-lombardi-roy:dynamical-method,coquand-lombardi:logical-approach,rinaldi-schuster-wessel:edde,cederquist-coquand:entrel}.

\paragraph{Notes for readers familiar with the themes of this contribution.}
We present a self-contained proof of the simple
version of Barr's theorem as Theorem~\ref{thm:baby-barr}. For reducing
intuitionistic provability to geometric provability, we employ (what amounts
to) the syntactic site, similar to the refined completeness result
by Thierry Coquand~\cite{coquand:site} but closer to the standard description
of the syntactic site. For reducing classical provability to intuitionistic
provability, we employ a variant of the double negation translation. We were
not able to track down a reference to this approach in the context of
infinitary logic, but the basic idea has long been known and is summarized, for
instance, in Ref.~\bracketedrefcite{aczel:russell-prawitz}.

In Section~\ref{sect:sheaves}, where we introduce sheaves, we should
probably also introduce sites. They would fit the narrative well and allow for
the construction of generic models of arbitrary geometric theories and not only
propositional ones. They would also allow us to cut down on the required metatheory,
sitting comfortably within predicative environments such as arithmetic
universes~\cite{maietti:au,vickers:sketches} or~\textsc{czf}.
We do not, however, mainly to not require
prerequisites in category theory.

The technique presented in Section~\ref{sect:appl} can be
regarded as a first-order (and, if desired, higher-order) coating of an
underlying geometric theory. We propose to take this coating seriously; in
particular to make good use of surprising nongeometric sequents validated by the
generic model; and, for ease of use by algebraists, to cast it in semantic terms,
even though it is all syntax behind the scenes.\footnote{By now it is an established idea in constructive algebra that we should
consider the theory of prime filters instead of the prime filters themselves,
hoping that, for instance, a given mathematical proof of the statement~``no
prime filter contains~$x$'' can also be cast in the
geometric language of the theory, which then yields the nilpotency of~$x$
constructively. If the given proof uses only geometric reasoning, this is
immediate; if it uses intuitionistic reasoning, it can be compiled down (for
instance precisely by the technique we use); and if it uses classical reasoning, a
version of Barr's theorem might be applicable. The elusive actual prime filters
of the ring are replaced with the concrete and purely syntactical generic prime
filter.

This approach misses that the generic model is interesting on its own and in
particular that it satisfies peculiar nongeometric sequents such as ``nonunits
are nilpotent'' and ``any ideal is \notnot finitely generated''. In this sense,
the generic model is better than the actual models it aims to replace, and we
believe that potential applications of this insight should be properly
explored.}

\section{Locales}
\label{sect:locales}

The formal definition of a locale will be given below as
Definition~\ref{defn:locale}, but we first review examples and comment on
the relevance of locales to constructive mathematics.

\subsection{Spaces without points}
\label{sect:examples-no-points}

A metric or topological space without any points is not very interesting: It is
empty, and up to isomorphism there is only one such space. In contrast, a
locale can be nontrivial even if it does not contain any points. This
phenomenon is an instance of a general guiding principle, namely that relinquishing
points increases flexibility.

\paragraph{The locale of surjections~$\NN \twoheadrightarrow \RR$.} As is
well-known, there are no surjections from~$\NN$ to~$\RR$.\footnote{More
precisely, it is a theorem of classical mathematics that the reals are
uncountable in the sense of admitting no surjection from the naturals. The
situation is more subtle in constructive mathematics. Firstly, in the absence
of countable choice, the reals bifurcate into several distinct flavors, hence
one needs to state which flavor of the reals one is referring to. Secondly, all
known proofs of the uncountability of the Cauchy and the Dedekind reals assume
either the law of excluded middle or the axiom of countable choice. In the
absence of either of these axioms, only the MacNeille reals are known to be
uncountable~\cite{blechschmidt-hutzler:macneille}.} Hence there is no
interesting topological space of those surjections. The set~$\{ f : \NN
\twoheadrightarrow \RR \,|\, \text{$f$ is a surjection} \}$ is empty, and there
is not much more to say about this state of affairs. However, there is a
well-defined and nontrivial locale~$X$ of those surjections.

The points of~$X$ are in canonical one-to-one correspondence with the
surjections~$\NN \twoheadrightarrow \RR$, hence~$X$ does not have any points.
But this locale does have uncountably many basic opens~$U_{nx}$, where~$n$
ranges over the naturals and~$x$ ranges over the reals. We picture~$U_{nx}$ as
the ``open of those surjections~$f$ for which~$f(n) = x$'' and we can compute with
these opens, consider functions on~$X$ and ponder sublocales of~$X$. For instance:
\begin{enumerate}
\item If~$x \neq y$, then the intersection of~$U_{nx}$ with~$U_{ny}$
is truly empty.
\item The union of the~$U_{nx}$, where~$x$ is a fixed real
number and~$n$ ranges over the naturals, is all of~$X$. No finite number
of these opens covers~$X$, hence~$X$ is not compact.
\item Assuming~$x = y \vee \neg(x = y)$ for any pair of reals, there is, for any real~$x$, a well-defined continuous function from~$X$ to
(the localic version of) the naturals which, on the level of points, would map
a surjection~$f : \NN \to \RR$ to the smallest number~$n$ such that~$f(n) = x$.
\item There is a well-defined continuous function from~$X$ to (the localic
version of) the reals which, on the level of points, would map a surjection~$f$
to the number~$\sum_{n=0}^\infty 2^{-n} \arctan(f(n))$. Unlike the previous
example, this function is not locally constant.
\item\label{item:intersection-sublocales} The locale~$X$ is a sublocale of the
locale~$Y$ of arbitrary functions~$\NN \to \RR$ (which can also be realized as
a topological space). It can be obtained as the intersection of the uncountably
many sublocales~$Y_x$, where~$Y_x$ is the sublocale of~$Y$ consisting of those
functions which hit the real~$x$.
\item A certain sublocale of~$X$, the locale~$X'$ of those functions~$\NN \to
\RR$ for which any real has infinitely many preimages, has a fractal nature: It
is covered by the (intersections with~$X'$ of the) opens~$U_{0x}$, where~$x$ ranges over the reals; the pairwise
intersection of these opens is (truly) empty; and they are each isomorphic
to~$X'$ -- on the level of points, by mapping a surjection~$f$ in~$U_{0x}$ to
the surjection~$(n \mapsto f(n+1))$.
\end{enumerate}

There is nothing special about the real numbers in this example;
in fact, the example works just as well with any set~$M$ in place of the reals.
For any set~$M$, there is a locale of surjections~$\NN \twoheadrightarrow M$,
and this locale is trivial (isomorphic to the empty locale) if and only if~$M$
is empty. Locales of these kind are used as an important reduction step in the
extension of Grothendieck's Galois theory by André Joyal and
Myles Tierney~\cite[Section~V.3]{joyal-tierney:galois-theory}.

\paragraph{Intersection of dense sublocales.} In ordinary topology, the
intersection of dense subspaces need not be dense. A simple example is the
intersection of~$\QQ$ with its complement in~$\RR$. In contrast, the
intersection of (even an arbitrary set of) dense sublocales is always again
dense -- even if the intersection might well have no points.

For instance, the locale-theoretic intersection of (the localic version
of)~$\QQ$ with its complement does not have any points and is dense in (the
localic version of) the reals. Intuitively, while these two sublocales do not
have any points in common, there still is nontrivial ``localic glue''.


Another example is given by item~(\ref{item:intersection-sublocales}) above:
Each of the sublocales~$Y_x$ is dense in~$Y$, hence their intersection~$X$ is
so as well.

\paragraph{The Banach--Tarski paradox.} The Banach--Tarski paradox is the
unintuitive statement that a three-dimensional solid ball in~$\RR^3$ of
radius~$r$ can be partitioned into five disjoint subsets in such a way that
rearranging those subsets using only Euclidean motions yields two disjoint
solid balls of radius~$r$ each. The axiom of choice is required to procure
these subsets, and the Banach--Tarski paradox is not in
contradiction with the basic properties of the Lebesgue measure in~$\RR^3$
because these intermediate subsets are not measurable.

The traditional way to avoid the Banach--Tarski paradox is to adopt the
\emph{axiom of determinacy} instead of the axiom of choice. Just as the axiom
of choice posits that a certain property of the finite domain also holds for the
infinite, the axiom of determinacy is a certain statement whose finitary
analogue is provable in unadorned Zermelo--Fraenkel set theory. It entails that
all subsets of~$\RR^n$ are measurable.

The Banach--Tarski paradox can also be avoided by adopting a localic point of
view: While the localic counterparts of the five pieces do not have any points
in common, the locale-theoretic pairwise intersections are still nontrivial~\cite{simpson:measure}.
Hence there is no paradox, as one would not expect a rearrangement of
overlapping sublocales to preserve volume.

\paragraph{Random sequences.} \emph{Cantor space} is the space~$2^\NN$ of
infinite binary sequences, equipped with the product topology. A subset~$U
\subseteq 2^\NN$ is open with respect to this topology if and only if for
every~$\alpha \in U$, there is a number~$n \in \NN$ such that
\[ \{ \beta \in 2^\NN \,|\, \forall i < n\+ \alpha(i) = \beta(i) \} \subseteq U. \]

The open subsets of~$2^\NN$ are precisely those subsets which encode
\emph{observable} (or \emph{semidecidable}) properties of binary sequences,
those properties for which a finite record of digits suffices to verify that a
sequence has it (but not necessarily to falsify it). For instance, the sets~$V
= \{ \alpha \in 2^\NN \,|\, \alpha(0) = \alpha(1) = 1 \}$ and~$W = \{
\alpha \in 2^\NN \,|\, \exists n\in\NN\+ \alpha(n) = 1 \}$ are open and the sets
\begin{align*}
  A &\defeq \{ \alpha \in 2^\NN \,|\, \forall n\in\NN\+ \alpha(n) = 0 \} \\
  B &\defeq \Bigl\{ \alpha \in 2^\NN \,\Big|\, \lim_{n \to \infty} \frac{1}{n} \sum_{i=0}^{n-1} \alpha(i) = \frac{1}{2} \Bigr\}
\end{align*}
are not. With respect to the standard Lebesgue measure on~$2^\NN$, the sets~$V$
and~$W$ have measure~$\frac{1}{4}$ and~$1$, respectively.

Unlike a regular sequence such as~$(0,0,0,\ldots)$, we intuitively expect that, if the
digits of a \emph{random sequence} are revealed step by step, we will
eventually verify that it is an element of~$W$; and moreover, that it has any
observable property~$U$ of Lebesgue measure~$1$.

Elevating this expectation to a definition, it seems prudent to declare that a
sequence is \emph{random} if and only if it is contained in the intersection of
all open subsets~$U \subseteq 2^\NN$ with Lebesgue measure~$1$. One could then
endeavor to setup a develop a theory of probability on this notion.

However, the resulting theory will be trivial for the plain reason that
according to this definition, there are no random sequences: For any
sequence~$\alpha$, the set~$\{ \beta \in 2^\NN \,|\, \exists n \in \NN\+
\alpha(n) \neq \beta(n) \}$ is open, has measure~$1$ and does not
contain~$\alpha$.

Despite this fundamental issue, one can still argue that this approach has
merit~\cite{simpson:measure}, and hence it is worthwhile to find a
mathematical foundation which is capable of formalizing it. Topological
spaces are too restrictive, but Alex Simpson observed that locales are
sufficiently flexible: There is a \emph{locale of random sequences}, defined as
the locale-theoretic intersection of all measure-$1$ opens of~$2^\NN$. This
locale is nontrivial despite having no points.

\subsection{Constructive concerns}

By relinquishing points, locales provide a more flexible notion of space.
Section~\ref{sect:examples-no-points} substantiates this observation
with several examples of nontrivial locales without any points. Constructive
mathematics gives a further, orthogonal motivation to study locales: There are
situations in which the relevant spaces do have enough points, but only if one
subscribes to the axiom of choice or similar non-constructive principles. In
these situations, the pointfree approach facilitated by locales helps to give
constructive versions of classical results. We will further discuss this theme
in Section~\ref{sect:presenting-frames}. Examples for this phenomenon include
the following.

\paragraph{Compactness of the unit interval.} The unit interval, when realized
as a topological space, can fail to be compact in constructive
mathematics. For instance, it fails in the Russian school because the Kleene
tree provides a computable open covering of~$[0,1]$ with no computable finite
subcovering (a self-contained exposition of this phenomenon is contained in
Ref.~\bracketedrefcite{bauer:kleene-tree}).

In contrast, the localic version of the unit interval is always compact. The
proof is by an explicit computation with its basic opens, exploiting the inductive
nature of derivations~\cite{cederquist-negri:heine-borel}.

Similarly, Tychonoff's theorem that the product of any number of compact
topological spaces is again compact is equivalent to the axiom of choice. Its
localic counterpart, the statement that the product of any number of compact
locales is again compact, can be verified without any nonconstructive
principles~\cite{vickers:tychonoff}.

\paragraph{Galois theory.} Let~$L|k$ be a Galois extension. The
fundamental theorem of (infinite) Galois theory states that there is a bijection between
the intermediate extensions~$L|E|k$ and the closed subgroups of the topological
Galois group~$\Gal(L|k)$. The bijection maps an intermediate extension~$L|E|k$
to the subgroup~$\Gal(L|E)$ and its inverse maps a closed subgroup~$H \subseteq
\Gal(L|k)$ to the fixed field~$L^H$.

Much of the proof of the fundamental theorem of Galois theory is constructive,
but some parts use the law of excluded middle and Zorn's lemma in order to
construct certain extensions of given field homomorphisms.\footnote{For
instance, the statement~$E \subseteq L^{\Gal(L|E)}$ is trivial. For the converse
inclusion, let~$x \in L^{\Gal(L|E)}$. Assume for the sake of contradiction
that~$x \not\in E$. Using Zorn's lemma and the law of excluded middle, we find
a homomorphism~$\sigma : L \to L$ with~$\sigma|_E = \operatorname{id}$
and~$\sigma(x) \neq x$. This is a contradiction to~$x \in L^{\Gal(L|E)}$.}
As a consequence, the fundamental theorem of Galois theory as stated is not
provable in constructive mathematics.

However, this failure is not for fundamental Galois-theoretic reasons, but because of an
unfortunate choice in the definitions. There is a notion of a localic group~\cite{wraith:localic-groups} (a
locale~$G$ equipped with continuous maps~$G \times G \xrightarrow{\circ} G$, $G
\xrightarrow{(\cdot)^{-1}} G$, $1 \xrightarrow{e} G$ satisfying the group
axioms), and the topological Galois group has a localic counterpart. The
fundamental theorem can be reformulated to refer to this localic Galois group,
and the proof of this reformulation is entirely constructive~\cite{wraith:galois-topos}.

If one is so inclined, then one can obtain the topological version of the
fundamental theorem as a corollary of the localic version; the required
non-constructive principles for this step are neatly packaged up in the study
of the relation of the localic Galois group with the topological one.

\begin{remark}Incidentally, the fundamental theorem of Galois theory also
showcases a related general phenomenon, namely that classical mathematics
allows to push back topological concerns for a bit longer,
whereas in constructive mathematics we have to embrace topology
(in a sufficiently pointfree form such as locales) from the beginning.

To be more specific, classically, the basic version of the fundamental theorem
(intermediate extensions correspond to subgroups) only holds for finite Galois
extensions. For infinite Galois extensions, we have to restrict to
\emph{closed} subgroups. Constructively, the basic version cannot even be shown
for finite extensions; we have to employ spatial language even for those, or
else settle for a discrete version of the fundamental theorem: For finite field
extensions, finite intermediate extensions correspond to finite
subgroups~\cite[Theorem~8.8]{mines-richman-ruitenburg:constructive-algebra}.
Classically, intermediate extensions of finite extensions and subgroups of
finite groups are automatically finite, but constructively this can
fail.\footnote{For instance, the field extension~$\QQ(\sqrt{2})|\QQ$ is finite.
However, if the intermediate extension~$E \defeq \{ x \in \QQ(\sqrt{2}) \,|\, x
\in \QQ \vee \varphi \}$ is finite, then~$\varphi \vee \neg\varphi$. Using the
sheaf semantics reviewed in Section~\ref{sect:sheaf-semantics}, this
intermediate extension can be turned into a topological
counterexample~\cite[page~65]{wraith:localic-groups}.}
\end{remark}

\paragraph{Gelfand duality.} The celebrated Gelfand correspondence states that
the mapping
\[ X \longmapsto \Hom(X, \CC) = \{ f : X \to \CC \,|\, \text{$f$ is continuous} \} \]
defines a contravariant equivalence between compact Hausdorff spaces and
commutative unital~$C^\star$-algebras. In the other direction,
a~$C^\star$-algebra~$A$ is mapped to its \emph{maximal spectrum}~$\Max(A)$, the
topological space of multiplicative linear functionals~$A \to \CC$.

Some parts of the proof require the axiom of choice, (only) for fabricating
appropriate elements of the maximal spectrum. But as with the fundamental
theorem of infinite Galois theory, the Gelfand correspondence has a
constructive core, and a pointfree reformulation unearths this constructive
content: There is a contravariant equivalence between compact completely
regular locales and commutative
unital~$C^\star$-algebras~\cite{banaschewski-mulvey:gelfand,coquand-spitters:gelfand,henry:gelfand}.

The constructive Gelfand correspondence has been used to construct phase spaces
for quantum-mechanical systems in the Bohr topos approach to quantum
mechanics~\cite{butterfield-hamilton-isham:bohr,heunen-landsman-spitters:bohr,henry:bohr}.
Briefly, a quantum mechanical system is described by a~$C^\star$-algebra~$A$
which is in all interesting cases noncommutative. As such the Gelfand correspondence cannot be
applied to~$A$. But there is a mirror image of~$A$ as a sheaf model on a
certain locale, the \emph{Bohr topos} of~$A$, which \emph{is} commutative.
Since the universe of sheaves supports intuitionistic reasoning, as we will
review in Section~\ref{sect:sheaf-semantics}, the constructive Gelfand
correspondence can then be applied to this mirror image.

\begin{remark}For a time, we only knew that~\textsc{zfc} proved that there is a
constructive proof of the localic version of Gelfand duality without directly
knowing such a constructive proof. This situation arose because the first proof
of the localic duality~\cite{banaschewski-mulvey:gelfand} employed \emph{Barr's
theorem}, to be reviewed on page~\pageref{par:barr}.\end{remark}

\subsection{The basics of the theory of locales}

The starting point of the theory of locales is the following
observation. The set~$\OOO(X)$ of open subsets of a topological
space~$X$ forms a partially ordered set which has
\[ \begin{array}{@{}ccc@{}}
\text{arbitrary joins (suprema)}
&\quad\text{and}\quad&
\text{finite meets (infima)}, \\
\bigvee && \wedge
\end{array} \]
and where finite meets distribute over arbitrary joins:
\[ U \wedge \bigvee_i V_i = \bigvee_i (U \wedge V_i). \]

The key idea of locales is to elevate this observation to a definition, in the
process dropping the requirement for the elements of the ordered set to be sets
of points:
\begin{definition}A \emph{frame} is a partially ordered set with (arbitrary,
set-indexed) joins and finite meets such that the distributive law
holds. A \emph{frame homomorphism~$\alpha : A \to A'$} is a monotone map~$A
\to A'$ which preserves arbitrary joins and finite meets.\end{definition}

The least element of a frame (the empty join) is denoted~``$\bot$'' and the
largest element (the empty meet)~``$\top$''. The notion of a frame is
(infinitarily) algebraic. To obtain a geometric notion, we ``reverse the
direction of the arrows'':

\begin{definition}\label{defn:locale}
A \emph{locale}~$X$ is given by a frame~$\OOO(X)$, the ``frame
of opens of~$X$''. A \emph{morphism~$f : X \to X'$ of locales} (or ``continuous
map of locales'') is a frame homomorphism~$\OOO(X') \to \OOO(X)$.\end{definition}

In place of the open sets of points, locales have arbitrary \emph{opens}, the
elements of their underlying frame. The opens of locales behave similar to the
open sets in topology in that arbitrary unions and finite intersections make
sense; but unlike before, they need not be sets of points. Occasionally we
abuse notation and denote the largest open of a locale~$X$ by~``$X$''.

Examples for locales include the following.
\begin{enumerate}
\item Any topological space~$Y$ induces a locale~$L(Y)$ by
setting~$\OOO(L(Y)) \defeq \OOO(Y)$. A continuous map~$f : Y \to Y'$ of topological
spaces induces the frame homomorphism~$\OOO(Y') \to \OOO(Y),\,U \mapsto f^{-1}[U]$
in the other direction and hence a morphism~$L(Y) \to L(Y')$ of locales in the
same direction.
\item The \emph{one-point locale}~$\pt$ is the locale induced
by the one-point topological space~$\{\star\}$. Its frame of opens is
the powerset of~$\{\star\}$, also known as the set~$\Omega$ of truth
values. Its least element is~$\bot = \emptyset$ and its largest element
is~$\top = \{\star\}$, and potentially not all elements of~$\Omega$ are equal
to one of these two.
\item The locale of surjections~$\NN \twoheadrightarrow \RR$ and the localic
version of the reals of Section~\ref{sect:examples-no-points} are best
constructed as \emph{classifying locales}, a notion to be introduced in
Section~\ref{sect:presenting-frames}.
\item The radical ideals of any ring~$A$ form a frame and hence give rise to a
locale. We will identify this locale in Theorem~\ref{thm:workhorse} to be the
\emph{spectrum} of~$A$ as studied in algebraic geometry.
\end{enumerate}

Several notions in topology only refer to open sets and not to points. Such
notions have an immediate counterpart in locale theory. For instance, a
locale~$X$ is \emph{compact} iff for any family~$(U_i)_{i \in I}$ of opens
of~$X$ such that~$\top = \bigvee_{i \in I} U_i$, there is a (Kuratowski-)finite
subset~$I' \subseteq I$ such that~$\top = \bigvee_{i \in I'} U_i$.\footnote{A
set~$M$ is \emph{Kuratowski-finite} iff there is a surjection~$[n]
\twoheadrightarrow M$, where~$[n] = \{1,2,\ldots,n\}$. In constructive
mathematics, we distinguish this notion from the stronger condition for a
set~$M$ to be \emph{Bishop-finite}, which requires that there is a
bijection~$[n] \to M$.}
With this definition, a topological space~$Y$ is compact iff its induced locale~$L(Y)$ is.

Points do not appear in the definition of a locale, but they can be defined as
a derived concept:
\begin{definition}A \emph{point} of a locale~$X$ is a locale morphism~$\pt \to
X$.\end{definition}

This definition is inspired from the situation with topological spaces, where
continuous maps from the one-point space to a topological space~$Y$ are in
canonical bijection with the points of~$Y$.

The underlying frame homomorphism~$\alpha : \OOO(X) \to \Omega$ of a point~$x$
of~$X$ can be pictured as mapping each open~$U$ of~$X$ to the truth value to which
extent~$x$ belongs to~$U$. Mnemonically, we write~$x \inplus U$ iff~$\alpha(U)
= \top$.

The point~$x$ is completely determined by the information to which opens it
belongs, that is by the set {$\{ U \in \OOO(X) \,|\, \alpha(U) = \top \}$}. This set is a \emph{completely
prime filter}, that is a subset~$\fff \subseteq \OOO(X)$ which is upward-closed,
closed under finite meets and for which~$\bigvee_i U_i \in \fff$ implies~$U_i
\in \fff$ for some index~$i$; and conversely, any such completely prime filter
gives rise to a point of~$X$ (Exercise~\ref{ex:points-as-filters}).

\begin{definition}A locale~$X$ is \emph{spatial} if and only if its points
suffice to detect the inclusion relation on the opens of~$X$, that is if for any opens~$U, V \in \OOO(X)$,
if~$x \inplus U \Rightarrow x \inplus V$ for all points~$x$ of~$X$, then~$U
\preceq V$.\end{definition}

An equivalent definition is: A locale~$X$ is \emph{spatial} if and only if its points
suffice to distinguish its opens, that is if for any opens~$U, V \in \OOO(X)$, if~$x \inplus U
\Leftrightarrow x \inplus V$ for all points~$x$ of~$X$, then~$U = V$.

For instance, any locale induced by a topological space is spatial. The locale
of surjections~$\NN \twoheadrightarrow \RR$ is a striking example of a locale which fails to be
spatial. The localic real line is spatial in classical mathematics and can
fail to be spatial in constructive mathematics
(Exercise~\ref{ex:spatiality-reals}). In this case, the locale induced by the
topological space of reals should not be confused with the true localic real
line. These locales have the same points, but the former might not be locally compact
while the latter always is.

\begin{remark}\label{rem:definition-sober}
The set of points of a locale~$X$ can be made into a topological space, giving
rise to a functor~$\Pt : \Loc \to \Top$. This functor is right adjoint
to the functor~$L : \Top \to \Loc$. A locale~$X$ is spatial iff the canonical
morphism~$L(\Pt(X)) \to X$ is an isomorphism, and a topological space~$Y$
is \emph{sober} iff the canonical morphism~$Y \to \Pt(L(Y))$ is a
homeomorphism. The space~$\Pt(L(Y))$ is the \emph{sobrification} of~$Y$; for
instance, the sobrification of any inhabited indiscrete space is the one-point
space.

The adjunction~$L \dashv \Pt$ restricts to an equivalence between sober
topological spaces and spatial locales. Assuming the law of excluded middle,
both Hausdorff spaces and the schemes from algebraic geometry are sober
(Exercise~\ref{ex:hausdorff-sober}), hence the spaces of a wide
range of the mathematical landscape can be faithfully studied as locales.\end{remark}

\begin{remark}\label{rem:sober-embed}
Since continuous maps between spaces induce locale morphisms, we
have a canonical map~$\Hom_\Top(Y,Y') \to \Hom_\Loc(L(Y),L(Y'))$. If~$Y'$ is
sober, this map is a bijection. In particular, in this case the set-theoretic points
of~$Y'$ and the locale-theoretic points of~$L(Y')$ are in canonical
one-to-one correspondence.
\end{remark}

\subsection*{Exercises}
\addcontentsline{toc}{subsection}{Exercises}

\begin{exercise}[The space of points of a locale]%
\label{ex:canonical-topology}%
Devise a canonical to\-po\-logy on the set of points of a locale.
\end{exercise}

\begin{exercise}[Sober topological spaces as locales]%
Verify the statements made in Remark~\ref{rem:sober-embed}.
\end{exercise}

\begin{exercise}[Hausdorff spaces are sober]%
\label{ex:hausdorff-sober}%
A \emph{meet-irreducible open subset} of a topological space~$Y$ is an
open subset~$W \subseteq Y$ such that~$(U \cap V \subseteq W) \Rightarrow (U
\subseteq W) \vee (V \subseteq W)$ for all open subsets~$U,V \subseteq Y$ and
such that~$Y \not\subseteq W$.
\begin{alphlist}[(c)]
\item Assuming the law of excluded middle, show that the completely prime
filters of a topological space are in canonical one-to-one correspondence with
the meet-irreducible open subsets.\smallskip

{\scriptsize\emph{Hint.} Given a completely prime filter~$\fff \subseteq
\OOO(Y)$, the set~$\bigcup\{ U \in \OOO(Y) \,|\, U \not\in \fff \}$ is a
meet-irreducible open subset.\par}
\item Assuming the law of excluded middle, show that a topological space is
sober in the sense of Remark~\ref{rem:definition-sober} if and only if for
every meet-irreducible open subset~$W$ there is a unique point~$y \in Y$ such
that~$W = Y \setminus \overline{\{y\}}$.
(This is a classical definition of sobriety.)

\item Assuming the law of excluded middle, show that Hausdorff topological
spaces validate the condition stated in part~(b) and hence are sober.
\end{alphlist}
{\scriptsize\emph{Note.} Without the law of excluded middle, Hausdorff spaces
and even metric spaces may fail to be sober. In fact, already the topological
space of rational numbers can fail to be
sober~\cite[Example~8.14(iv)]{fourman-scott:sheaves-and-logic}.\par}
\end{exercise}

\begin{exercise}[Complete metric spaces are sober]%
A \emph{metric space} is a set~$Y$ together with a relation~$d \subseteq Y
\times Y \times \QQ_{\geq0}$, written~``$d(x,y) \leq q$'', such that the
following axioms are satisfied.
\begin{align*}
  d(x,y) \leq q \wedge q \leq r &\Longrightarrow d(x,y) \leq r \\
  (\forall r \in \QQ_{> q}\+ d(x,y) \leq r) &\Longrightarrow d(x,y) \leq q \\
  &\mathrel{\phantom{\Longrightarrow}} \exists q \in \QQ_{\geq0}\+ d(x,y) \leq q \\
  &\mathrel{\phantom{\Longrightarrow}} d(x,x) \leq 0 \\
  d(x,y) \leq 0 &\Longrightarrow x = y \\
  d(x,y) \leq q &\Longrightarrow d(y,x) \leq q \\
  d(x,y) \leq q \wedge d(y,z) \leq r &\Longrightarrow d(x,z) \leq q+r
\end{align*}
A \emph{Cauchy process} (or \emph{multi-valued modulated Cauchy sequence}) in a metric
space~$Y$ is a map~$\alpha : \QQ_{>0} \to P(Y)$ such that all the sets~$\alpha(q)$
are inhabited and such that~$d(x,x') \leq q + q'$ for all~$q,q' \in \QQ_{>0}$
and~$x \in \alpha(q), x' \in \alpha(q')$. Such a process \emph{converges
to}~$x_0 \in Y$ iff~$d(x,x_0) \leq q$ for all~$q \in \QQ_{>0}$
and~$x \in \alpha(q)$. A metric space~$Y$ is \emph{complete} iff every Cauchy
process in~$Y$ converges.

Verify, without using the law of excluded middle, that any complete metric
space is sober. \smallskip

\noindent{\scriptsize\emph{Note.} In the presence of the countable axiom of choice, any
Cauchy process can be refined to a Cauchy sequence. In its absence, however, the
familiar completion construction using equivalence classes of Cauchy sequences
fails to yield a Cauchy-complete space. In contrast, any Cauchy sequence and
more generally any Cauchy process converges in the space of equivalence classes
of Cauchy processes.\par}
\end{exercise}

\begin{exercise}[Isomorphisms of frames]%
\label{ex:isomorphisms-of-frames}%
Show that a frame homomorphism~$\alpha : A \to A'$ is an isomorphism (that is,
admits an inverse frame homomorphism) if and only if~$\alpha$ is surjective and
reflects the ordering (that is,~$\alpha(U) \preceq \alpha(V) \Rightarrow U
\preceq V$).
\end{exercise}

\begin{exercise}[The covariant approach to locales]%
Let~$A$ be a frame.
\begin{alphlist}[(c)]
\item Show that~$A$ contains arbitrary (set-indexed) meets (though they are not
required to be preserved by frame homomorphisms), by the impredicative
construction
\[ \bigwedge_{i \in I} U_i = \bigvee\{ V \in A \,|\, \text{$V \preceq U_i$ for all~$i \in I$}
\}. \]
\item What are the arbitrary meets in the frame of open subsets of a
topological space?
\item Show that for any frame homomorphism~$\alpha : A \to A'$ there exists a
monotone map~$\beta : A' \to A$ such that~\emph{$\beta$ is right adjoint
to~$\alpha$}, that is such that
\[ \alpha(U) \preceq V \quad\text{iff}\quad U \preceq \beta(V) \]
for all~$U \in A$ and~$V \in A'$.
\end{alphlist}
{\scriptsize\emph{Note.} Based on the observation in part~(c), a
\emph{covariant approach} to locales can be developed, in which locale maps~$X
\to X'$ are defined to be certain kinds of maps~$\OOO(X) \to
\OOO(X')$~\cite{picado-pultr:covariant}.\par}
\end{exercise}

\begin{exercise}[Points as complete prime filters]%
\label{ex:points-as-filters}%
Verify that the points of a locale~$X$ are in canonical one-to-one correspondence
with the completely prime filters of~$\OOO(X)$.
\end{exercise}

\begin{exercise}[Dense opens]%
\label{ex:dense-opens}%
An open~$U$ of a locale is \emph{dense} iff for any open~$V$, $U \wedge V =
\bot$ implies~$V = \bot$.
\begin{alphlist}[(b)]
\item Let~$M$ be a set and endow it with the discrete topology. Show that an
open~$U \in \OOO(L(M))$ is dense iff for any element~$x \in M$, $\neg\neg(x \in
U)$.
\item Let~$\varphi$ be a truth value, hence an open of the one-point
locale~$\pt$. Show that~$\varphi$ is dense in~$\pt$ iff~$\neg\neg\varphi$.
\end{alphlist}
\end{exercise}

\begin{exercise}[Characterizing spatial locales]%
Let~$X$ be a locale. Show that the following conditions are equivalent.
\begin{alphlist}[(c)]
\item For any opens~$U, V \in \OOO(X)$: If~$x \inplus U \Rightarrow y \inplus V$
for all points~$x$ of~$X$, then~$U \preceq V$.
\item For any opens~$U, V \in \OOO(X)$: If~$x \inplus U \Leftrightarrow y \inplus V$
for all points~$x$ of~$X$, then~$U = V$.
\item There is a topological space~$Y$ and a surjective morphism~$L(Y) \to X$
of locales. (A morphism of locales is \emph{surjective} iff its underlying
frame homomorphism is injective.)
\item The canonical morphism~$L(\Pt(X)) \to X$ is an isomorphism.
\end{alphlist}
\end{exercise}

\begin{exercise}[The frame of radical ideals]%
\label{ex:frame-of-radical-ideals}%
Let~$A$ be a ring. We order the set~$\Idl(A)$ of ideals of~$A$ by inclusion.
(See Section~\ref{sect:algebraic-preliminaries} for some algebraic
preliminaries.)
\begin{alphlist}[(b)]
\item Verify that~$\Idl(A)$ has arbitrary meets and joins, given by
intersection and sum of ideals, but that it is in general not a
frame.\smallskip

{\scriptsize\emph{Hint.} The distributive law fails in the polynomial ring~$\QQ[X,Y]$.\par}
\item Show that the sub-poset~$\Rad(A)$ of radical ideals is a frame, with
the same finite meets but with joins given by the radical of the join computed
in~$\Idl(A)$.
\item\label{item:join-top} Let~$\varphi$ be a truth value. Let~$\{\top\,|\,\varphi\}$ be the
subsingleton subset of~$\Rad(A)$ which contains the top element~$\sqrt{(1)}$
iff~$\varphi$. Verify that
\[ \bigvee\{\top\,|\,\varphi\} = \{ f \in A \,|\, \text{$f$ is nilpotent
or~$\varphi$} \} \]
in~$\Rad(A)$.
\end{alphlist}
\end{exercise}

\begin{exercise}[Coproduct of locales]%
\begin{alphlist}[(c)]
\item Show that the category of frames has small products.
\item Deduce that the category of locales has small coproducts.\smallskip

{\scriptsize\emph{Note.} The category of locales also has small products;
Exercise~\ref{ex:products-of-locales} is devoted to this fact.\par}
\item Let~$M$ be a set and endow it with the discrete topology. Show
that~$L(M)$ is the coproduct~$\coprod_{x \in M} \pt$.
\end{alphlist}
\end{exercise}

\begin{exercise}[Local locales]%
\label{ex:local-locales}%
A locale~$X$ is \emph{local} iff for any open covering~$\top = \bigvee_i U_i$,
there is an index~$i$ such that~$\top = U_i$. (For instance, the spectrum of a
ring~$A$ is local iff~$A$ is local.)
Show that any local locale has a canonical point, its \emph{focal point}.
\end{exercise}

\begin{exercise}[Maps into discrete spaces]%
Let~$X$ be a locale. Let~$A$ be a set and endow it with the discrete topology. Show
that morphisms~$X \to L(A)$ are in canonical correspondence with
families~$(U_a)_{a \in A}$ of opens of~$X$ such that~$\top = \bigvee_{a \in A}
U_a$ and~$U_a \wedge U_b \preceq \bigvee\{ \top \,|\, a = b \}$.
\end{exercise}

\section{Geometric theories and sheaf models}

\subsection{Geometric theories}
\label{sect:geometric-theories}

\begin{definition}\label{defn:geometric-theory}
A \emph{geometric theory} consists of
\begin{enumerate}
  \item a set of sorts: $X$, $Y$, $Z$, \ldots
  \item a set of function symbols: $f : X \times Y \to Z$, \ldots
  \item a set of relation symbols: $R \hookrightarrow X \times Y \times Z$, \ldots
  \item a set of geometric sequents as axioms: $\varphi \vdash_{x:X, y:Y} \psi$, \ldots
\end{enumerate}
A \emph{geometric sequent} (in some context~$x_1\?X_1,\ldots,x_n\?X_n$) is a
formula built using only the ingredients ${=}\ {\top}\ {\wedge}\ {\bot}\
{\vee}\ {\bigvee}\ {\exists}$ and the relation symbols (but no ${\Rightarrow}\
{\forall}$). We follow the usual convention that negation is understood as
syntactic sugar for implication to~$\bot$. The symbol~``$\bigvee$'' refers to disjunction of arbitrary
set-indexed families of formulas. The theory \emph{proves} a sequent~$\sigma$ if and
only if there exists a \emph{derivation} of~$\sigma$, where the class of
derivations is inductively generated by the axioms and the rules displayed in
Table~\ref{table:geometric-logic}.\footnote{Because of the infinitary
disjunctions, some care is needed to not ``build in'' the axiom of choice in
the definition of derivations. More specifically, we allow inhabited sets of
subdeductions instead of requiring that subdeductions are given by a function.
Details on the implementation in~\textsc{czf} can be found in
Ref.~\cite[Definition~5.2]{rathjen:barr}.}
\end{definition}

We refer to Ref.~\cite[Section~D1.1]{johnstone:elephant} for a more detailed
version of Definition~\ref{defn:geometric-theory}.

{\makeatletter
\newcommand*{\shifttext}[1]{%
  \settowidth{\@tempdima}{#1}%
  \makebox[\@tempdima]{\hspace*{\@tempdima}#1}%
}
\makeatother
\begin{table}[t]
  \tbl{The rules of geometric logic}{\begin{minipage}{0.8\textwidth}
    \centering
    \medskip

    \textnormal{structural rules} \\\smallskip
    \phantom{a}\hfill
    \AxiomC{$\phantom{\seq{\vec x}}$}\UnaryInfC{$\varphi \seq{\vec x} \varphi$}\DisplayProof\hfill
    \AxiomC{$\varphi \seq{\vec x} \psi$}\AxiomC{$\psi \seq{\vec x}
    \chi$}\BinaryInfC{$\varphi \seq{\vec x} \chi$}\DisplayProof\hfill
    \AxiomC{$\varphi \seq{\vec x} \psi$}\UnaryInfC{$\varphi[\vec s/\vec x]
    \seq{\vec y} \psi[\vec s/\vec x]$}\DisplayProof
    \makebox[0pt]{\shifttext{\tiny ($\vec y$ not bound)}}  
    \phantom{a}\hfill
    \bigskip

    \textnormal{rules for conjunction} \\\smallskip
    \phantom{a}\hfill
    \AxiomC{$\phantom{\seq{\vec x}}$}\UnaryInfC{$\varphi \seq{\vec x} \top$}\DisplayProof\hfill
    \AxiomC{$\phantom{\seq{\vec x}}$}\UnaryInfC{$\varphi \wedge \psi \seq{\vec x} \varphi$}\DisplayProof\hfill
    \AxiomC{$\phantom{\seq{\vec x}}$}\UnaryInfC{$\varphi \wedge \psi \seq{\vec x} \psi$}\DisplayProof\hfill
    \AxiomC{$\varphi \seq{\vec x} \psi$}\AxiomC{$\varphi \seq{\vec x} \chi$}\BinaryInfC{$\varphi \seq{\vec x} \psi \wedge \chi$}\DisplayProof
    \phantom{a}\hfill
    \bigskip

    \textnormal{rules for finitary disjunction} \\\smallskip
    \phantom{a}\hfill
    \AxiomC{$\phantom{\seq{\vec x}}$}\UnaryInfC{$\bot \seq{\vec x} \varphi$}\DisplayProof\hfill
    \AxiomC{$\phantom{\seq{\vec x}}$}\UnaryInfC{$\varphi \seq{\vec x} \varphi \vee \psi$}\DisplayProof\hfill
    \AxiomC{$\phantom{\seq{\vec x}}$}\UnaryInfC{$\psi \seq{\vec x} \varphi \vee \psi$}\DisplayProof\hfill
    \AxiomC{$\varphi \seq{\vec x} \chi$}\AxiomC{$\psi \seq{\vec x} \chi$}\BinaryInfC{$\varphi \vee \psi \seq{\vec x} \chi$}\DisplayProof
    \phantom{a}\hfill
    \bigskip

    \textnormal{rules for infinitary disjunction} \\\smallskip
    \phantom{a}\hfill
    \AxiomC{$\phantom{\seq{\vec x}}$}\UnaryInfC{$\varphi_{i_0} \seq{\vec x} \bigvee_{i \in I} \varphi_i$}\DisplayProof\hfill
    \AxiomC{$\varphi_i \seq{\vec x} \chi \text{ for each~$i \in I$}$}\UnaryInfC{$\bigvee_{i \in I} \varphi_i \seq{\vec x} \chi$}\DisplayProof
    \phantom{a}\hfill
    \bigskip

    \textnormal{double rule for existential quantification} \\\smallskip
    \phantom{a}\hfill
    \Axiom$\varphi \ \fCenter\seq{\vec x, y} \psi$
    \doubleLine
    \UnaryInf$\exists y\?Y\+ \varphi\ \fCenter\seq{\vec x} \psi$
    \DisplayProof
    \makebox[0pt]{\shifttext{\tiny ($y$ not free in~$\psi$)}}
    \hfill\phantom{a}
    \bigskip

    \textnormal{mixed rules (unnecessary in presence of the rule for implication)} \\\smallskip
    \vspace{-0.5em}
    \phantom{a}\hfill
    \AxiomC{$\phantom{\seq{\vec x}}$}
    \UnaryInfC{$(\bigvee_i \varphi_i) \wedge \psi \seq{\vec x} \bigvee_i (\varphi_i \wedge \psi)$}
    \DisplayProof
    \hfill
    \AxiomC{$\phantom{\seq{\vec x}}$}
    \UnaryInfC{$(\exists y\?Y\+ \varphi) \wedge \psi \seq{\vec x} \exists y\?Y\+ (\varphi \wedge \psi)$}
    \DisplayProof
    \makebox[0pt]{\shifttext{\tiny ($y$ none of the~$\vec x$)}}
    \hfill\phantom{a}
    \bigskip

    \textnormal{rules for equality} \\\smallskip
    \vspace{-0.5em}
    \phantom{a}\hfill
    \AxiomC{$\phantom{\seq{\vec x}}$}
    \UnaryInfC{$\top \seq{x} x = x$}
    \DisplayProof
    \hfill
    \AxiomC{$\phantom{\seq{\vec x}}$}
    \UnaryInfC{$(\vec x = \vec y) \wedge \varphi \seq{\vec z} \varphi[\vec y/\vec x]$}
    \DisplayProof
    \hfill\phantom{a} \\[0.5em]
    (``$\vec x = \vec y\,$'' is an abbreviation for~``$x_1 = y_1 \wedge \cdots \wedge x_n =
    y_n$''.)%
  \end{minipage}}
  \label{table:geometric-logic}
\end{table}}

{\makeatletter
\newcommand*{\shifttext}[1]{%
  \settowidth{\@tempdima}{#1}%
  \makebox[\@tempdima]{\hspace*{\@tempdima}#1}%
}
\makeatother
\begin{table}[t]
  \tbl{The rules of infinitary intuitionistic first-order logic}{\begin{minipage}{0.8\textwidth}
    \centering
    \medskip

    (structural rules and rules for~${=}\ {\top}\ {\wedge}\ {\bot}\ {\vee}\ {\bigvee}\
    {\exists}$ as in Table~\ref{table:geometric-logic})
    \medskip

    \textnormal{rules for infinitary conjunction} \\\smallskip
    \phantom{a}\hfill
    \AxiomC{$\phantom{\seq{\vec x}}$}\UnaryInfC{$\bigwedge_{i \in I} \varphi_i \seq{\vec x} \varphi_{i_0}$}\DisplayProof\hfill
    \AxiomC{$\psi \seq{\vec x} \varphi_i \text{ for each~$i \in I$}$}\UnaryInfC{$\psi \seq{\vec x} \bigwedge_{i \in I} \varphi_i$}\DisplayProof
    \phantom{a}\hfill
    \bigskip

    \textnormal{double rule for implication} \\\smallskip
    \phantom{a}\hfill
    \Axiom$\varphi \wedge \psi\ \fCenter\seq{\vec x} \chi$
    \doubleLine
    \UnaryInf$\varphi\ \fCenter\seq{\vec x} \psi \Rightarrow \chi$
    \DisplayProof
    \phantom{a}\hfill
    \bigskip

    \textnormal{double rule for universal quantification} \\\smallskip
    \phantom{a}\hfill
    \Axiom$\varphi \ \fCenter\seq{\vec x, y} \psi$
    \doubleLine
    \UnaryInf$\varphi\ \fCenter\seq{\vec x} \forall y\?Y\+ \psi$
    \DisplayProof
    \makebox[0pt]{\shifttext{\tiny ($y$ not free in~$\varphi$)}}
    \hfill\phantom{a}
    \bigskip

  \end{minipage}}
  \label{table:first-order-logic}
\end{table}}

Even though superficially similar, geometric theories play a
substantially different role than formal systems such as Peano arithmetic or
Zermelo--Fraenkel set theory. There are a number of notable differences:
\begin{enumerate}
\item Formal systems are typically of foundational interest and can be
fruitfully employed as metatheories. Geometric theories are more interesting
for specific applications, for instance for presenting spaces. This aspect is discussed
in Section~\ref{sect:presenting-frames}.
\item Formal systems typically come with an intended model; geometric theories
do not. For instance, the geometric theory of groups has all groups as models;
when we are writing down the axioms of a group, we are not setting out to
capture the properties of any one specific group.
\item Formal systems are typically required to be recursively
axiomatizable. Many interesting geometric theories are not, and indeed contain
an uncountable number of sorts, function symbols, relation symbols or axioms. Geometric theories also allow for arbitrary
set-indexed disjunctions; for this to make sense, the study of geometric
theories can only be carried out in a sufficiently rich background theory.
\item For formal systems, typically a version of Gödel's completeness theorem
holds: A formula is derivable iff it holds in all models. In contrast,
geometric theories can be consistent yet do not admit any set-based models. An
example is the geometric theory of surjections~$\NN \twoheadrightarrow \RR$
reviewed below.
\item Perhaps the most important difference to formal systems is that geometric
theories, unlike formal systems, often depend on given mathematical objects. For instance, for each
ring~$A$, there is the geometric theory of prime filters of~$A$. This theory
will play a crucial role in Section~\ref{sect:appl}.
\end{enumerate}

\begin{definition}A \emph{set-based model}~$M$ (or ``Tarski model'') of a
geometric theory~$\TT$ consists of
\begin{enumerate}
  \item a set~$\brak{X}$ for each sort~$X$,
  \item a function~$\brak{f} : \brak{X_1} \times \cdots \times \brak{X_n} \to
  \brak{Y}$
  for each function symbol~$f : X_1 \times \cdots \times X_n \to Y$ and
  \item a relation~$\brak{R} \subseteq \brak{X_1} \times \cdots \times \brak{X_n}$
  for each relation symbol~$R \hookrightarrow X_1 \times \cdots \times X_n$
\end{enumerate}
such that~$M$ validates the axioms of~$\TT$.
\end{definition}

We explicitly allow empty carriers, that is we do not demand that the
sets~$\brak{X}$ are inhabited.
Examples for geometric theories include the following.
\begin{enumerate}
\item \emph{The geometric theory of rings.} This theory has one sort, $R$; five function symbols:
$0$ and $1$ (nullary), $-$ (unary), $+$ and $\cdot$ (binary); no relation
symbols; and the usual axioms, such as $\top \vdash_{x:R,y:R} x + y = y + x$.

A set-based model of the theory of rings is an ordinary ring.

By adding the axiom
\begin{align*}
  \top &\seq{x:R} (x = 0) \vee (\exists y\?R\+ xy = 1),
\intertext{we describe \emph{geometric fields}, and by additionally adding for
each natural number~$n \geq 1$ the axiom}
  \top &\seq{a_0:R,\ldots,a_{n-1}:R} \exists x\?R\+ x^n + a_{n-1}x^{n-1} +
  \cdots + a_1x + a_0 = 0,
\end{align*}
we describe \emph{algebraically closed geometric fields}. (This field
condition is called ``geometric'' because it can be put as a geometric sequent.
Other, constructively not equivalent conditions such as ``any nonzero element
is invertible'' or ``any nonunit is zero'' are not geometric sequents.)

In a similar vein, there is the geometric theory of monoids, of groups, and so
on. Some of these theories use the infinite disjunctions supported by geometric
logic: For instance, the theory of fields of positive characteristic or the
theory of torsion groups have among their axioms
\[ \top \vdash \bigvee_{n \in \NN_{\geq 1}} (\underbrace{1 + \cdots + 1}_{\text{$n$ summands}} = 0)
  \quad\text{or}\quad
  \top \seq{g:G} \bigvee_{n \in \NN} (g^n = e). \]

\item \emph{The geometric theory of objects.}\label{item:theory-of-objects}
This theory has one sort, $X$, and no function
symbols, relation symbols or axioms. A set-based model of this theory is just a
set.

A variant of this theory is the theory of inhabited objects, which has the
axiom~$\top \vdash \exists x\?X\+ \top$. Its set-based models are precisely the
inhabited sets.

\item\label{item:theory-of-surjections} \emph{The geometric theory of surjections~$f : \NN \twoheadrightarrow
\RR$.} This theory has no
sorts; no function symbols; a nullary relation symbol~$\varphi_{nx}$ for each
pair~$\langle n,x \rangle \in \NN \times \RR$ (to be read as~``$f$ maps~$n$
to~$x$''); and the following axioms:
\begin{enumerate}
\item For each~$n \in \NN$, the axiom~$\top \vdash \bigvee_{x \in \RR}
\varphi_{nx}$, expressing that~$f$ is total.
\item For each~$n \in \NN$ and each~$x,y \in \RR$, the axiom~$\varphi_{nx}
\wedge \varphi_{ny} \vdash \bigvee\{ \top \,|\, x = y \}$,
expressing that~$f$ is single-valued.\footnote{The disjunction is taken over
the set~$\{ \top \,|\, x = y \}$. This is a certain subsingleton set of
formulas; it is inhabited (by the formula~$\top$) iff~$x = y$. Hence, if~$x =
y$, this axiom reads~$\varphi_{nx}
\wedge \varphi_{ny} \vdash \top$ and could also be omitted; if~$x \neq y$,
this axiom reads~$\varphi_{nx} \wedge \varphi_{ny} \vdash \bot$. If we work in
a constructive metatheory, we cannot perform this case distinction.}
\item For each~$x \in \RR$, the axiom~$\top \vdash \bigvee_{n \in \NN}
\varphi_{nx}$, expressing that~$f$ is surjective.
\end{enumerate}

A model of this geometric theory consists of an~$(\NN \times \RR)$-indexed
family of truth values, that is a subset~$G$ of~$\NN \times \RR$, satisfying
precisely those axioms which render~$G$ the graph of a surjection~$\NN
\twoheadrightarrow \RR$.

\item \emph{The geometric theory of Dedekind cuts.}\label{par:dedekind-cuts}
This theory has no sorts; no
function symbols; nullary relation symbols~$\alpha_x$ and~$\beta_x$ for each
rational number~$x$ (to be read as~``$x$ is contained in the lower (upper)
cut''); and the following axioms, spelling out that the cut is bounded,
rounded, open and located:
\begin{align*}
  \top &\vdash \textstyle\bigvee_{x \in \QQ} \alpha_x \\
  \top &\vdash \textstyle\bigvee_{x \in \QQ} \beta_x \\
  \alpha_y &\vdash \alpha_x & \text{(for each~$x,y \in \QQ$ such that~$x < y$)} \\
  \beta_x &\vdash \beta_y & \text{(for each~$x,y \in \QQ$ such that~$x < y$)} \\
  \alpha_x &\vdash \textstyle\bigvee_{y > x} \alpha_y & \text{(for each~$x \in \QQ$)} \\
  \beta_y &\vdash \textstyle\bigvee_{x < y} \beta_x & \text{(for each~$y \in \QQ$)} \\
  \alpha_x \wedge \beta_y &\vdash \bot & \text{(for each~$x,y \in \QQ$ such that~$x \geq y$)} \\
  \top &\vdash \alpha_x \vee \beta_y & \text{(for each~$x,y \in \QQ$ such that~$x < y$)}
\end{align*}

A model of this geometric theory consists of two families of truth values, both
indexed by the rational numbers, hence two subsets~$L,U \subseteq \QQ$, in such
a way that~$\langle L,U \rangle$ is a Dedekind cut.

\item \emph{The geometric theory of prime ideals of a given ring~$A$.} In classical
commutative algebra, the notion of a \emph{prime ideal} of a ring~$A$ is
fundamental. Corresponding to the definition (recalled in
Section~\ref{sect:algebraic-preliminaries}), the geometric theory of prime
ideals has no sorts; no function
symbols; one relation symbol~``$V(x)$'' for each element~$x \in A$; and the
following axioms:
\begin{align*}
  \top &\vdash V(0) \\
  V(x) \wedge V(y) &\vdash V(x+y) & \text{(for each~$x,y \in A$)} \\
  V(x) &\vdash V(xy) & \text{(for each~$x,y \in A$)} \\
  V(1) &\vdash \bot \\
  V(xy) &\vdash V(x) \vee V(y) & \text{(for each~$x,y \in A$)}
\end{align*}

For many purposes, the theory of prime \emph{filters} is actually more
relevant.\footnote{This is not to say that all the issues with prime ideals in
constructive mathematics evaporate when turning to prime filters. For a time it
was believed that algebraic geometry could be developed constructively if only
the spectrum of a ring was defined as the topological space of its prime
filters~\cite[Section~3]{lawvere:icm-address}, but this hope turned out to be
too naive. André Joyal gave an explicit example of a nontrivial sheaf model of
the theory of rings without any prime filters~\cite[pp.~200f.]{tierney:spectrum}.}
Classically, a prime filter of a ring is simply the complement of a
prime ideal; constructively, it is prudent to turn prime ideals upside down and axiomatize this notion directly.
Hence the theory of prime filters of~$A$ has no sorts, no function symbols, one
relation symbol~``$D(x)$'' for each element~$x \in A$ and the following
axioms:
\begin{align*}
  D(0) &\vdash \bot \\
  D(x+y) &\vdash D(x) \vee D(y) & \text{(for each~$x,y \in A$)} \\
  D(xy) &\vdash D(x) & \text{(for each~$x,y \in A$)} \\
  \top &\vdash D(1) \\
  D(x) \wedge D(y) &\vdash D(xy) & \text{(for each~$x,y \in A$)}
\end{align*}

\item \emph{The geometric theory of automorphisms of a field.}\label{par:theory-automorphisms}
Let~$L|k$ be a field
extension. The theory of automorphisms~$L \to L$ which fix~$k$ has
one nullary relation symbol~``$\sigma_{xy}$'' for each pair~$\langle x,y
\rangle \in L$ and the following axioms:
\begin{align*}
  \top &\vdash \bigvee_y \sigma_{xy} & \text{(for each~$x \in L$)} \\
  \top &\vdash \bigvee_x \sigma_{xy} & \text{(for each~$y \in L$)} \\
  \sigma_{xy} \wedge \sigma_{xy'} &\vdash \bigvee\{ \top \,|\, y = y' \} & \text{(for each~$x,y,y' \in L$)} \\
  \sigma_{xy} \wedge \sigma_{x'y} &\vdash \bigvee\{ \top \,|\, x = x' \} & \text{(for each~$x,x',y \in L$)} \\
  \top &\vdash \sigma_{00} \\
  \top &\vdash \sigma_{11} \\
  \sigma_{xy} \wedge \sigma_{x'y'} &\vdash \sigma_{x+x',y+y'} & \text{(for each~$x,x',y,y' \in L$)} \\
  \sigma_{xy} \wedge \sigma_{x'y'} &\vdash \sigma_{xx',yy'} & \text{(for each~$x,x',y,y' \in L$)} \\
  \top &\vdash \sigma_{xx} & \text{(for each~$x \in k$)}
\end{align*}
A model of this theory (can be reorganized to be) an element of the classical
Galois group of~$L|k$.

\item \emph{The inconsistent geometric theory.} This theory has no sorts,
function symbols or relations, but the single axiom $\top \vdash \bot$.
This theory does not admit any set-based models.

\item \emph{The empty geometric theory.} This theory does not have any sorts,
function symbols, relations or axioms. There is exactly one set-based model of
this theory, the empty structure.
\end{enumerate}

\paragraph{Extracting constructive content from classical proofs.}\label{par:barr}
Geometric logic strikes a fine balance between being sufficiently expressive in order to
have rich applications and also being sufficiently restrictive so that \emph{Barr's
theorem} holds. This theorem is concerned with eliminating classical reasoning
from proofs of geometric sequents, and it comes in a simple yet still useful
and a more sophisticated version:
\begin{enumerate}
\item If infinitary classical logic (classical logic extended by set-indexed
conjunctions and disjunctions) proves a geometric sequent, then so does
geometric logic.
\item If a specific version of extensional type theory with the law of excluded
middle and the axiom of choice proves a geometric sequent, then so does
geometric logic.
\end{enumerate}
Barr's theorem has ramifications for constructive algebra, as pointed out by
Gavin Wraith~\cite{wraith:intuitionistic-algebra}.
For instance, the \emph{Nullstellensatz} is the statement that for any
system~$f_1,\ldots,f_n \in K[X_1,\ldots,X_m]$ of polynomials over an
algebraically closed field~$K$, either the polynomials~$f_i$ have a common zero
or there are polynomials~$g_i$ such that~$1 = \sum_i f_i g_i$ (such an
expression is an \emph{algebraic witness} of the nonexistence of a common
zero).

The Nullstellensatz is a theorem of classical mathematics whose usual proof employs the
axiom of choice. Since it can be formulated as a geometric sequent over the
geometric theory of algebraically closed geometric fields, Barr's theorem
implies that there is also a constructive proof, in fact even a purely
geometric proof.

The simpler version of Barr's theorem is constructive, and although we will not
use it in the remainder of this text, we use this opportunity to present a
self-contained proof. The proof describes an explicit method for eliminating
the law of excluded middle from proofs of geometric sequents.

In contrast, the full version is not constructive.\footnote{%
The textbook argument~\cite[Theorem~7.57]{johnstone:topos-theory} establishing
the full version of Barr's theorem requires the axiom of choice. But recently
Michael Rathjen was able to lower the required metatheory
to~\textsc{zf}~\cite[Remark~4.2]{rathjen:barr}, by working internally to a
relative version of Gödel's constructible universe~$L$ over a forcing
extension, where the Barwise completeness theorem applies. To our knowledge,
the precise strength of Barr's theorem relative to~\textsc{czf} or~\textsc{izf}
has not yet been precisely calibrated. In particular, it is an open
question~\cite{henry:question-barr} whether Barr's theorem implies the law of
excluded middle, though it would be a major surprise if it did not.}
This puts us in the curious situation that we are promised, unconstructively,
that there exist certain constructive proofs, without being given any indication how
these proofs could be found or what an upper bound on their length is.

However, knowing that these proofs exist platonically is still worthwhile, as
we can be assured that looking for these proofs is not bound to fail. Indeed,
for the example of the Nullstellensatz, a constructive proof is given
in Ref.~\cite[Theorem~9.7]{lombardi-quitte:constructive-algebra}.

Also, if one is interested in constructive proofs not because of their
algorithmic content or of the mathematical insights they communicate but more
pragmatically for their applicability in, for instance, sheaf models, not
having them explicitly available is entirely bearable.

For the precise statement of the simple version of Barr's theorem, we recall
that the rules of geometric logic have been defined in
Table~\ref{table:geometric-logic}; that the rules of infinitary intuitionistic
first-order logic have been defined in Table~\ref{table:first-order-logic}; and
that infinitary classical logic is infinitary intuitionistic logic extended by
the law of excluded middle.
\begin{theorem}[Barr's theorem, simple version]
\label{thm:baby-barr}
Let~$\TT$ be a geometric theory.
Let~$\sigma$ be a geometric sequent over the signature of~$\TT$. Then the
following statements are equivalent.
\begin{enumerate}
\item There is a~$\TT$-derivation of~$\sigma$ in geometric logic.
\item There is a~$\TT$-derivation of~$\sigma$ in infinitary intuitionistic
logic.
\item There is a~$\TT$-derivation of~$\sigma$ in infinitary classical logic.
\end{enumerate}
\end{theorem}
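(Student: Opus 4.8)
The plan is as follows. The implications (1)~$\Rightarrow$~(2)~$\Rightarrow$~(3) are immediate: every derivation in geometric logic is verbatim a derivation in infinitary intuitionistic logic (the rules of Table~\ref{table:geometric-logic} are among those of Table~\ref{table:first-order-logic}), and infinitary classical logic extends infinitary intuitionistic logic by further rules. Hence all the content lies in (3)~$\Rightarrow$~(1), which I would establish in two independent stages: a reduction of classical provability to intuitionistic provability, carried out by a syntactic translation of derivations, and a reduction of intuitionistic provability to geometric provability, carried out by interpreting derivations in the \emph{generic model} of $\TT$.

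For the stage (2)~$\Rightarrow$~(1) I would build the \emph{syntactic site} $\CCC_\TT$: objects are geometric formulas-in-context $\{\vec x \+ \varphi\}$, morphisms are $\TT$-provably functional geometric relations, and a sieve on $\{\vec x \+ \varphi\}$ is declared covering precisely when geometric logic proves, from $\TT$, that $\varphi$ is the join of the subobjects in the sieve. After checking the coverage axioms one forms the topos $\Sh(\CCC_\TT)$ together with the tautological $\TT$-model $U_\TT$ (each sort $X$ interpreted as $\{x\?X \+ \top\}$, each relation symbol by the evident subobject). The heart of this stage is the \emph{conservativity lemma}: for every geometric sequent $\sigma$, the model $U_\TT$ validates $\sigma$ if and only if $\TT$ proves $\sigma$ in geometric logic. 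I would prove it by showing, by induction on the structure of a geometric formula $\psi$, that forcing $\psi$ at the stage $\{\vec x \+ \varphi\}$ in the sheaf semantics coincides exactly with ``$\TT$ proves $\varphi \seq{\vec x} \psi$ in geometric logic''; the atomic, $\wedge$-, $\vee$-, $\bigvee$-, $\exists$- and equality cases each drop out of the explicit description of the covers. Granting that the sheaf semantics of any Grothendieck topos is sound for infinitary intuitionistic first-order logic (Section~\ref{sect:sheaf-semantics}), a $\TT$-derivation of $\sigma$ in intuitionistic logic forces $U_\TT$ to validate $\sigma$, whence $\TT$ proves $\sigma$ geometrically.

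For the stage (3)~$\Rightarrow$~(2) I would use a variant of the Gödel--Gentzen double-negation translation $(\cdot)^\circ$ adapted to infinitary logic: $\neg\neg$ in front of atoms, $(\bigvee_i \varphi_i)^\circ \defeq \neg \bigwedge_i \neg \varphi_i^\circ$ (using that infinitary intuitionistic logic has set-indexed conjunctions), $(\exists y\?Y\+\varphi)^\circ \defeq \neg\forall y\?Y\+\neg\varphi^\circ$, and commutation with $\wedge$, $\bigwedge$, $\Rightarrow$ and $\forall$. A routine induction on derivations shows the translation is sound for classical logic --- the only genuinely new case being the law of excluded middle, whose translation is the intuitionistic theorem $\neg(\chi^\circ \wedge \neg\chi^\circ)$ --- so a classical $\TT$-derivation of a sequent $\varphi\seq{\vec x}\psi$ yields an intuitionistic derivation of $\varphi^\circ\seq{\vec x}\psi^\circ$ over the translated theory $\TT^\circ$ (whose axioms are the $\theta^\circ$ for $\theta$ an axiom of $\TT$). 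One then checks, by induction on a \emph{geometric} formula $\chi$, that $\chi^\circ$ is intuitionistically $\neg\neg$-stable and satisfies $\chi \vdash \chi^\circ$ and $\chi^\circ \vdash \neg\neg\chi$; this makes each translated axiom $\alpha^\circ \vdash \beta^\circ$ intuitionistically derivable from the original axiom $\alpha \vdash \beta$ via $\alpha^\circ \vdash \neg\neg\alpha \vdash \neg\neg\beta \vdash \neg\neg\beta^\circ \vdash \beta^\circ$, so that the derivation can be rerun over $\TT$ itself.

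The step I expect to be the main obstacle is passing from the translated sequent back to the original: since the translation decorates atoms with double negations, a literal reading of the previous paragraph delivers only $\TT \vdash (\varphi \seq{\vec x} \neg\neg\psi)$, which is a priori strictly weaker than the desired $\TT \vdash (\varphi \seq{\vec x} \psi)$. Removing this residual double negation is where the geometric shape of the target sequent must be used: geometric formulas contain neither $\Rightarrow$ nor $\forall$, so they are positive, and the double negations the translation wraps around their atoms and disjunctions can be stripped without reintroducing classical strength. Concretely I would refine the translation in the spirit of Friedman's trick, interpreting ``absurdity'' by the succedent $\psi$ rather than by $\bot$, arranged so that the translated succedent is intuitionistically implied by $\psi$ itself while the antecedent and the (geometric, hence well-behaved) axioms of $\TT$ stay under control; verifying that this refinement remains sound on classical derivations, and that the bookkeeping around the set-indexed disjunctions goes through, is the technical crux. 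Once this is in place one obtains a $\TT$-derivation of $\sigma$ in intuitionistic logic, and combining with stage (2)~$\Rightarrow$~(1) closes the cycle.
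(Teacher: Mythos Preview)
Your proposal is correct and follows essentially the same two-stage strategy as the paper: the Friedman-style translation (the paper's $\nabla$-translation with $\nabla\varphi \defeqv ((\varphi\Rightarrow\beta)\Rightarrow\beta)$, where $\beta$ is the succedent of the target sequent) for (3)$\Rightarrow$(2), and the classifying-topos/syntactic-site semantics for (2)$\Rightarrow$(1). The only differences are presentational: the paper goes directly to the Friedman translation rather than first attempting plain G\"odel--Gentzen and then diagnosing the residual $\neg\neg$, and for (2)$\Rightarrow$(1) it writes out the forcing clauses elementarily (Table~\ref{table:classifying-topos}, restricting to the propositional case for notational simplicity) rather than invoking the full machinery of sites and Grothendieck toposes---as the Remark following the proof confirms, the two presentations describe the same semantics.
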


\begin{proof}It is immediate that~(1) implies~(3).

To show that~(3) implies~(2), we combine the double negation translation
with Friedman's ``nontrivial exit continuation'' trick. We give a sketch of the
main thrust of the argument,
omitting lengthy verifications. Let~$\sigma \equiv (\alpha \seq{\vec a} \beta)$.
For notational simplicity, we assume that the context~$\vec a$ is empty, though
all of the following can be adapted to the nonempty case if care is taken to
avoid variable capture.

We introduce the operator~$\nabla$ with~$\nabla\varphi \defeqv
((\varphi \Rightarrow \beta) \Rightarrow \beta)$. This operator is a
\emph{local operator}~\cite[Section~14.5]{goldblatt:topoi} in the sense that
for any formula~$\varphi$ in some context~$\vec x$, infinitary intuitionistic
logic proves the sequents
\[
\top \seq{\vec x} (\varphi \Rightarrow \nabla\varphi),\
\top \seq{\vec x} (\nabla\nabla\varphi \Rightarrow \nabla\varphi),\
\top \seq{\vec x} ((\nabla\varphi \wedge \nabla\psi) \Leftrightarrow \nabla(\varphi \wedge \psi)).
\]
We then set up the~\emph{$\nabla$-translation}~$\varphi \mapsto \varphi^\nabla$
for infinitary first-order formulas over the signature of~$\TT$ as described in
Table~\ref{table:nabla-translation}.

\begin{table}[ht]
  \tbl{The definition of the~$\nabla$-translation}{\begin{minipage}{\textwidth}\begin{align*}
    (x = y)^\nabla &\defeqv \nabla(x = y) \\
    R(x_1,\ldots,x_n)^\nabla &\defeqv \nabla(R(x_1,\ldots,x_n)) \\
    \top^\nabla &\defeqv \top \quad \\
    \bot^\nabla &\defeqv \nabla\bot \\
    (\varphi \wedge \psi)^\nabla &\defeqv \varphi^\nabla \wedge \psi^\nabla &
    \textstyle (\bigwedge_i \varphi_i)^\nabla &\defeqv \textstyle \bigwedge_i \varphi_i^\nabla \\
    (\varphi \vee \psi)^\nabla &\defeqv \nabla(\varphi^\nabla \vee \psi^\nabla) &
    \textstyle (\bigvee_i \varphi_i)^\nabla &\defeqv \textstyle \nabla(\bigvee_i \varphi_i^\nabla) \\
    (\varphi \Rightarrow \psi)^\nabla &\defeqv (\varphi^\nabla \Rightarrow \psi^\nabla) \\
    (\forall x\?X\+ \varphi)^\nabla &\defeqv (\forall x\?X\+ \varphi^\nabla) &
    (\exists x\?X\+ \varphi)^\nabla &\defeqv \nabla(\exists x\?X\+ \varphi^\nabla)
  \end{align*}\end{minipage}}
  \label{table:nabla-translation}
\end{table}

Lengthy but straightforward computations by induction on the structure of
formulas or derivations then establish the following properties.
\begin{alphlist}
\item For any infinitary first-order formula~$\varphi$ in some context~$\vec y$,
infinitary intuitionistic logic proves~$\nabla(\varphi^\nabla) \seq{\vec x} \varphi^\nabla$.
\item For any geometric formula~$\varphi$ in some context~$\vec x$, infinitary
intuitionistic logic proves~$\varphi^\nabla \mathrel{\dashv\vdash\!\!\!_{\vec x}} \nabla\varphi$.
\item For any derivation of a sequent~$\varphi \seq{\vec x} \psi$ in infinitary
classical logic, there is a derivation of~$\varphi^\nabla \seq{\vec x}
\psi^\nabla$ in infinitary intuitionistic logic. This property rests on the
fact that the~$\nabla$-translation of the law of excluded middle,
\[ \top \seq{\vec x} \nabla(\varphi^\nabla \vee (\varphi^\nabla \Rightarrow \nabla\bot)), \]
is intuitionistically provable.
\end{alphlist}
Combining these ingredients, a given derivation of~$\sigma \equiv (\alpha
\vdash \beta)$ in infinitary classical logic can be transformed to give a derivation of
\[
  \alpha \vdash
  \nabla\alpha \vdash
  \alpha^\nabla \vdash
  \beta^\nabla \vdash
  \nabla\beta \equiv
  ((\beta \Rightarrow \beta) \Rightarrow \beta) \vdash
  \beta
\]
in infinitary intuitionistic logic.

To verify that~(2) implies~(1), we define a semantics~``$\alpha \models
\varphi$'' for geometric formulas~$\alpha$ and infinitary first-order
formulas~$\varphi$ by the clauses in Table~\ref{table:classifying-topos}. For
notational simplicity, we restrict to the propositional case, that is assume that the set of
sorts of~$\TT$ is empty.
The following properties of the semantics can then be established by induction:
\begin{alphlist}[(d)]
\item If~$\TT$ proves~$\beta \vdash \alpha$ (in geometric logic), then~$\alpha \models \varphi$ implies~$\beta
\models \varphi$.
\item If~$\TT$ proves~$\alpha \dashv\vdash \bigvee_i \beta_i$ (geometrically) and if~$\beta_i \models \varphi$ for
all indices~$i$, then~$\alpha \models \varphi$.
\item\label{item:baby-geom} If~$\varphi$ is a geometric formula, then~$\alpha
\models \varphi$ iff~$\TT$ proves~$\alpha \vdash \varphi$ (geometrically).
\item\label{item:baby-sound} If a first-order sequent~$(\varphi \vdash \psi)$
is derivable from the axioms of~$\TT$ in infinitary intuitionistic logic,
then~$\top \models (\varphi \Rightarrow \psi)$.
\end{alphlist}
We can then conclude as follows. Assume that~$(\varphi \vdash \psi)$
is derivable in infinitary intuitionistic logic. By~(\ref{item:baby-sound}), we
have~$\top \models (\varphi \Rightarrow \psi)$. By the clause for implication
in Table~\ref{table:classifying-topos}, we have~$\varphi \models \psi$.
By~(\ref{item:baby-geom}), there is a~$\TT$-derivation of~$\varphi \vdash
\psi$ in geometric logic.
\end{proof}

\begin{remark}The semantics set up in the proof of Theorem~\ref{thm:baby-barr}
coincides with the sheaf semantics of the \emph{classifying locale} of~$\TT$
reviewed in Section~\ref{sect:presenting-frames} and
Section~\ref{sect:sheaf-semantics}. In the nonpropositional case, the semantics
coincides with the slightly more general topos semantics of the
\emph{classifying topos} of~$\TT$~\cite[Section~2]{caramello:tst}. The local
operator defined in the proof cuts out a certain Boolean subtopos of the
classifying topos. An alternative proof proceeds by cut
elimination~\cite[Section~6]{rathjen:barr}; see also
Ref.~\bracketedrefcite{dyckhoff-negri:geometrisation}, and
Ref.~\bracketedrefcite{rathjen-toppel:reduction} for background on proof-theoretical
reduction, of which Barr's theorem is an example.
\end{remark}

\begin{table}[ht]
  \tbl{The semantics of the \emph{classifying topos of~$\TT$} (propositional case)}{\begin{tabular}{@{}l@{\ \ }c@{\ \ }l@{}}
    $\alpha \models \beta$ &iff& $\TT$ proves~$\alpha \vdash \beta$ (in geometric logic) \qquad(for~$\beta$ a relation symbol) \\
    $\alpha \models \top$ &iff& true \\
    $\alpha \models \varphi \wedge \psi$ &iff& $\alpha \models \varphi$ and $\alpha \models \psi$ \\
    $\alpha \models \bigvee_i \varphi_i$ &iff& there is a family~$(\beta_k)_{k \in K}$ of geometric
    formulas such that \\ &&\qquad $\TT$ proves~$\alpha \dashv\vdash \bigvee_k \beta_k$ and such for each index~$k \in K$,
    \\ &&\qquad\qquad there is an index~$i \in I$ such that~$\beta_k \models \varphi_i$ \\
    $\alpha \models (\varphi \Rightarrow \psi)$ &iff& for any geometric
    formula~$\beta$ such that~$\TT$ proves~$\beta \vdash \alpha$, \\
    &&\qquad if~$\beta \models \varphi$, then~$\beta \models \psi$
  \end{tabular}}
  \label{table:classifying-topos}
\end{table}

\subsection{Presenting frames by theories}
\label{sect:presenting-frames}

\begin{definition}A geometric theory~$\TT$ is \emph{propositional} if and only
if its set of sorts is empty.\end{definition}

As a consequence, a propositional geometric theory consists just of a set of
nullary relation symbols and a set of axioms. Associated to any such
theory~$\TT$ is its \emph{Lindenbaum algebra}: This is the partially ordered
set of the geometric formulas over the signature of~$\TT$ modulo~$\TT$-provable
equivalence, equipped with the ordering given by~$[\varphi] \preceq [\psi]$ iff~$\TT$
proves~$\varphi \vdash \psi$.

The Lindenbaum algebra is a frame, with finite meets given by~$[\varphi] \wedge
[\psi] = [\varphi \wedge \psi]$ and set-indexed joins given by~$\bigvee_i
[\varphi_i] = [\bigvee_i \varphi_i]$;\footnote{This short description of the
set-indexed joins picks representatives from each equivalence class and is
hence only sensible in the presence of the axiom of choice. In a
constructive metatheory, we should rather write~$\bigvee_{i \in I} M_i =
[\bigvee_{\varphi \in \bigcup_i M_i} \varphi]$.}
we can regard the Lindenbaum algebra as the free
frame generated by the nullary relation symbols of~$\TT$ modulo the axioms
of~$\TT$. Our interest in the Lindenbaum algebra is because it gives rise to a
locale:

\begin{definition}The \emph{classifying locale}~$L(\TT)$ of a propositional
geometric theory is the locale which has the Lindenbaum algebra of~$\TT$ as its
underlying frame.\end{definition}

It is an instructive exercise to verify that the points of~$L(\TT)$ are in
canonical one-to-one correspondence with the set-based models of~$\TT$. This
observation also explains why we cannot hope to construct classifying
locales of arbitrary, nonpropositional, geometric theories: Any locale only has
a set of points, but arbitrary geometric theories can have a proper class of
set-based models. (There is, however, a \emph{classifying arithmetic
space}~\cite{vickers:sketches} of any coherent theory and a \emph{classifying
topos}~\cite[Section~2]{caramello:tst} of any geometric theory.)

Many locales are fruitfully described as the classifying locale of a certain
propositional geometric theory. For instance:
\begin{enumerate}
\item The \emph{localic real line} is the classifying locale of the theory of
Dedekind cuts. Its points are the models of that theory, that is, the Dedekind
reals, and the induced topology on the set of points coincides with the usual
Euclidean topology on the reals.

The localic open unit interval is the classifying locale of the
same theory but with the additional axiom~$\top \vdash \alpha_0 \wedge \beta_1$.
\item The \emph{locale of surjections~$\NN \twoheadrightarrow \RR$} is the
classifying locale of the theory of surjections~$\NN \twoheadrightarrow \RR$.
\item The \emph{spectrum} of a ring~$A$ is the classifying locale of the
theory of prime filters of~$A$. The topological space of points of this locale
is the Zariski spectrum as familiar from algebraic
geometry.\footnote{Classically, the spectrum is usually defined as the
topological space of prime ideals instead of prime filters. However,
classically there is a canonical one-to-one correspondence between prime ideals
and prime filters, hence one could just as well use filters instead of ideals
in the definition. The classifying locale of prime ideals of~$A$ also exists,
and has the prime ideals of~$A$ as its points, but the induced topology on its
topological space of points is the \emph{flat topology} or \emph{co-Zariski
topology}~\cite{tarizadeh:flat} instead of the ordinary
Zariski topology~\cite[Proposition~4.5]{johnstone:rings-fields-and-spectra}.}
\item The \emph{localic Galois group} of a field extension~$L|k$ is the
classifying locale of the theory of ring automorphisms~$L \to L$ which fix~$k$.
\item The \emph{empty locale} is the classifying locale of the inconsistent theory.
\item The \emph{one-point locale} is the classifying locale of the empty theory.
\end{enumerate}

Just as we often refer to a topological space only by its points (``the
topological space of prime filters of~$A$''), not mentioning its topology, it is
customary to abbreviate ``the classifying locale of the propositional geometric
theory of prime filters of~$A$'' as ``the locale of prime filters of~$A$''.

\begin{remark}\label{rem:traditional-spaces}
The traditional way of constructing the real line or the Zariski
spectrum as a topological space proceeds in three steps: (1) Write down the
axioms (of Dedekind cuts or of prime filters, respectively). (2) Using the powerset
axiom and separation, construct the set of Dedekind reals or the set
of prime filters, respectively. (3) Devise a useful topology on the resulting set.

Locales provide us with a more economic and also more conceptual way of
arriving at these topological spaces: They can be obtained as the topological
space of points of the corresponding classifying locale. In particular,
step~(3) of manually devising a topology is not necessary when pursuing the
localic route.
\end{remark}

In general, different theories can give rise to isomorphic classifying locales;
such theories are called \emph{Morita-equivalent}. Olivia Caramello has built a vast research program
on this observation~\cite{caramello:tst}.

Conversely, any locale~$X$ is the classifying locale of a certain theory, namely
the \emph{theory of points of~$X$}. This theory has one nullary relation
symbol~$\varphi_U$ for each open~$U \in \OOO(X)$ (read as ``the point belongs
to~$U$'') and the following axioms:
\begin{align*}
  \varphi_U &\vdash \varphi_V & \text{(for all~$U,V \in \OOO(X)$ such that~$U \preceq V$)} \\
  \top &\vdash \varphi_\top \\
  \varphi_U \wedge \varphi_V &\vdash \varphi_{U \wedge V} & \text{(for all~$U,V \in \OOO(X)$)} \\
  \varphi_{\bigvee_i U_i} &\vdash \bigvee_i \varphi_{U_i} & \text{(for each set-indexed family~$(U_i)_i$ of opens)}
\end{align*}
These are exactly the axioms for a completely prime filter of~$\OOO(X)$. The
isomorphism from~$\OOO(X)$ to the Lindenbaum algebra of this theory maps an
open~$U$ to~$[\varphi_U]$.

The question whether a given locale is spatial is intimately related with the
existence of appropriate \emph{ideal objects} whose existence typically hinges
on the axiom of choice or one of its variants. For instance, there are the
following results (where we allow ourselves a modicum of classical logic in
order to state the results in a familiar form).
\begin{enumerate}
\item The localic spectrum of a ring~$A$ is spatial iff every nontrivial ideal
(of a certain class of rings related with~$A$) is contained in a prime ideal.
\item The localic Galois group of a Galois extension~$L|k$ is spatial iff
any~$k$-homomorphism~$\sigma : E \to L$ defined on a finite intermediate
extension~$L|E|k$ admits an extension to a~$k$-homomorphism~$L \to L$.
\item The localic real line is spatial iff every open covering of the
metric space~$[0,1]$ of Dedekind reals has a Lebesgue number (that is, if the
metric space~$[0,1]$ is Heine--Borel compact).
\item The statement that the classifying locale of any propositional coherent
theory (that is, a propositional geometric theory in which only finite
disjunctions occur) is spatial is equivalent to the Boolean Prime Ideal
Theorem~\BPIT.
\end{enumerate}

In this picture, the opens of a locale can be regarded as finite approximations
to the ideal points which potentially belong to them. For instance, the
open~$[\sigma_{\sqrt{2},-\sqrt{2}} \wedge \sigma_{\sqrt{3},\sqrt{3}}]$
of the localic Galois group of~$\overline{\QQ}|\QQ$ can be regarded as an
approximation to a certain automorphism of~$\overline{\QQ}$ about
which~$\sqrt{2} \mapsto -\sqrt{2}$ and~$\sqrt{3} \mapsto \sqrt{3}$ is known. Over time, this
approximation might be refined to a smaller open, one which contains more
information about the purported automorphism, and eventually we might actually
obtain an automorphism which is defined on all of~$\overline{\QQ}$. \emph{The opens
make good constructive sense even if the points remain elusive.}

The results mentioned above explain why we can, in classical mathematics, where all of the stated
conditions are satisfied, blithely and fruitfully use the topological
spectrum of a ring, the topological Galois group or the topological space of
reals. In classical mathematics, employing their localic counterparts is merely
for convenience or for aesthetic reasons; in constructive mathematics,
localic replacement (or similar replacements in other pointfree approaches to
topology) is vital.

\begin{remark}The case of the locale of surjections~$\NN \twoheadrightarrow
\RR$ is particular in that there, even the strong existence axioms of
classical mathematics do not suffice to materialize a point. In fact, already
the law of excluded middle or the countable axiom of choice preclude the
existence of points.
\end{remark}

\subsection{Sheaves on locales}
\label{sect:sheaves}

\begin{definition}\label{defn:presheaf}
A \emph{presheaf}~$F$ on a locale~$X$ is a functor~$\OOO(X)^\op \to \Set$, that is
\begin{enumerate}
  \item a set~$F(U)$ for each open~$U \in \OOO(X)$ and
  \item a map~$(\cdot)|^U_V : F(U) \to F(V)$ for each pair of opens~$V \preceq U$
\end{enumerate}
such that $(\cdot)|^U_U = \operatorname{id}_{F(U)}$ for all~$U \in \OOO(X)$ and
$(\cdot)|^V_W \circ (\cdot)|^U_V = (\cdot)|^U_W$ for all~$W \preceq V
\preceq U$.\end{definition}

Definition~\ref{defn:presheaf}, as well as any other definition in this section,
also makes sense for topological spaces instead of locales, since only the
notion of opens and their inclusion relation is used. The elements of~$F(U)$ are
called \emph{sections of~$F$ over~$U$} and the maps~$(\cdot)|^U_V$
are called \emph{restriction maps}. Elements of~$F(\top)$ are called
\emph{global sections}.

When first learning about presheaves, one may feel intimidated by the vast
amount of data encoded in a single presheaf (one set for each open, of which there are often uncountably many). The
situation should be compared to the perhaps more familiar Kripke models for a first-order language~$L$. The
preorder~$W$ of nodes of such a model is like the frame of opens of a locale, and the
family~$(M_w)_{w \in W}$ of~$L$-structures, one for each node, is like a
presheaf. In fact, the notion of a locale can be generalized to that of a
\emph{site} so that both locales and Kripke frames induce sites and that
presheaves on sites can be defined; however, this shall not be pursued here,
a textbook reference is Ref.~\bracketedrefcite{moerdijk-maclane:sheaves-logic}.

The prototypical example of a presheaf (and also a sheaf) is the presheaf~$\CCC$ of
continuous real-valued functions on a topological space~$X$. For this presheaf, the
set~$\CCC(U)$ is the set of continuous real-valued maps~$U \to \RR$, and the
maps~$(\cdot)|^U_V$ are given by actual restriction of functions to smaller
domains, that is by
\[ \CCC(U) \longrightarrow \CCC(V),\ s \longmapsto s|_V. \]

Any set~$M$ gives rise to a presheaf~$\underline{M}^\mathrm{pre}$, by
setting~$\underline{M}^\mathrm{pre}(U) = M$ for every open~$U$ and employing
the identity map as restriction maps. In this way, the universe of sets embeds
into the universe of presheaves.

\begin{definition}
\begin{enumerate}
\item A \emph{compatible family} of a presheaf~$F$ with respect to an open
covering~$U = \bigvee_i U_i$ is a family~$(s_i)_i$ of sections~$s_i \in F(U_i)$
such that~$s_i|^{U_i}_{U_i \wedge U_j} = s_j|^{U_j}_{U_i \wedge U_j}$ for all
indices~$i,j$.
\item A presheaf~$F$ is a \emph{sheaf}
iff for any compatible family~$(s_i)_i$ with respect to any open covering~$U =
\bigvee_i U_i$ there is a unique section~$s \in F(U)$ such
that~$s|^U_{U_i} = s_i$ for all indices~$i$ (``compatible sections glue'').
\end{enumerate}
\end{definition}

For instance, the presheaf~$\CCC$ of continuous real-valued functions is a sheaf,
while its subpresheaf~$\CCC_\text{c}$ of constant real-valued functions is
usually not a sheaf. For instance, on~$X = \RR$, the two constant functions~$f
: (-1,0) \to \RR,\,x \mapsto -1$ and~$g : (0,1) \to \RR,\,x \mapsto 1$ agree on
the intersection of their domains (which is empty), but there is no constant
function~$h : (-1,0) \cup (0,1) \to \RR$ which restricts to~$f$ on~$(-1,0)$ and
to~$g$ on~$(0,1)$.

There is a general construction called \emph{sheafification} which turns
presheaves into sheaves. Applied to the presheaf~$\CCC_\text{c}$, this process yields the
sheaf of \emph{locally} constant real-valued functions. Similarly, the
sheafification of the presheaf of bounded functions is the sheaf of locally
bounded functions, and so on; this is a general principle.

\label{par:constant-sheaf}%
The presheaf~$\underline{M}^\mathrm{pre}$ associated to a set~$M$ is almost
never a sheaf. Its sheafification~$\underline{M}$ is the \emph{constant sheaf associated
to~$M$}\label{page:constant-sheaf}. On an open~$U$, its sections are the locale
morphisms~$U \to M$, where~$M$ is regarded as the locale induced by
the discrete topological space~$M$ and~$U$ is regarded as a sublocale of~$X$.
By the constant sheaf construction, the universe of sets maps to the universe
of sheaves on~$X$. We will not require much more details on this construction
and only note two special cases:
\begin{enumerate}
\item If the locale~$X$ is induced by a topological space~$Y$, then
\[ \underline{M}(U) = \{ f : U \to M \,|\, \text{$f$ is locally constant} \}. \]
\item If~$X$ is the one-point locale, then~$\underline{M}(U) = M^U$.
\end{enumerate}

In the other direction, given a point~$x$ of a locale~$X$, any sheaf~$F$ on~$X$ gives
rise to a set, namely the \emph{stalk of~$F$ at~$x$}. The stalk is the set of
\emph{germs of sections} of~$F$ at~$x$, that is the set of all sections of~$F$
on opens~$U$ such that~$x \inplus U$, where two such sections~$\langle U,s
\rangle$ and~$\langle U',s' \rangle$ are identified iff there is an open~$V
\preceq U \wedge U'$ such that~$x \inplus V$ and~$s|^U_V = s'|^{U'}_V$.

For instance, the stalk at the origin of the sheaf of holomorphic functions
on~$\CC$ is in canonical bijection with the set~$\CC\{z\}$ of convergent power
series, and the stalks of a constant sheaf~$\underline{M}$ are all in canonical
bijection with~$M$.

\begin{definition}A \emph{subsheaf} of a sheaf~$F$ on a locale~$X$ is a
sheaf~$F'$ with~$F'(U) \subseteq F(U)$ for all opens~$U$ and the inherited
restriction maps.\end{definition}

\begin{definition}A \emph{morphism of presheaves}~$\eta : F \to G$ on a
locale~$X$ is a family~$(\eta_U)_{U \in \OOO(X)}$ of maps~$\eta_U : F(U) \to
G(U)$ such that for all sections~$s \in F(U)$ and all opens~$V \preceq U$
of~$X$, $\eta_U(s)|^U_V = \eta_V(s|^U_V)$. A \emph{morphism of sheaves} is a
morphism of the underlying presheaves.
\end{definition}

\subsection{Sheaf semantics}
\label{sect:sheaf-semantics}

\paragraph{Motivating the sheaf semantics.} Let~$F$ be a sheaf on a locale~$X$.
What is a sensible reading of the statement~``$\forall s\?F\+ (\varphi(s)
\Rightarrow \psi(s))$''? A first answer is ``for any global section~$s \in
F(X)$ such that~$\varphi(s)$,~$\psi(s)$''. However, this proposal does not do
justice to the sheaf, whose interesting behavior may well only play out with
its sections on small opens. Indeed, many important sheaves have no or few
global sections. Hence a more sophisticated answer is ``for any open~$U \in
\OOO(X)$ and any section~$s \in F(U)$ with~$\varphi(s)$,~$\psi(s)$'',
where~``$\varphi(s)$'' and~``$\psi(s)$'' have to be interpreted in a similar
vein.

Similarly, given a section~$s \in F(U)$, what should~``$\neg\varphi(s)$'' mean?
Since a section may well validate additional properties when we zoom in to
smaller opens, the sheaf-theoretically sensible answer is ``except for the
empty open~$V = \bot$,~$s|^U_V$ does not have property~$\varphi$ for any
open~$V \preceq U$'', where again~``$\varphi(s|^U_V)$'' has to be understood in a
sensible way.

As a final motivating example, consider the statement~``$\exists s\?F\+
\varphi(s)$''. This should mean that there is an open covering~$X = \bigvee_i
U_i$ such that on each open~$U_i$, there is a section~$s \in F(U_i)$ such
that~$\varphi(s)$, properly interpreted. We should not require these individual
sections to agree on the intersections~$U_i \wedge U_j$ -- by the sheaf
property, this would amount to saying that there is a global section~$s \in
F(X)$ with~$\varphi(s)$.

The \emph{sheaf semantics} formalizes the informal language agreement just
outlined. A marvelous fact with wide-ranging applications is that this
semantics is sound with respect to first-order intuitionistic logic (and even
infinitary higher-order intuitionistic logic, and even a suitable version of
extensional type theory, if the semantics is appropriately extended).

That is, if we have established a property~$\varphi$ and if there is an
intuitionistic proof that~$\varphi$ entails a further property~$\psi$, then we
may conclude that also the interpretation of~$\psi$ with the sheaf semantics
holds.

Hence the sheaf semantics allows us to reason with sheaves in much the same way
as we reason with sets, in a simple element-based language, and to import much
of the already-existing results in constructive mathematics into the world of
sheaves. For instance, any intuitionistic theorem about rings yields a
corresponding theorem about sheaves of rings.

There is only one caveat to keep in mind: Irrespective of its status in the
metatheory, the interpretation of the law of excluded middle with the sheaf
semantics is, for most locales~$X$, false (Example~\ref{ex:negneg}).
Practitioners of other flavors of constructive mathematics should also observe
that the interpretation of any form of the axiom of choice, even countable
choice or weak countable choice, is false for most locales~$X$. (About the only
exception are locales induced from discrete topological spaces. For such
locales, the law of excluded middle and countable, dependent or full choice
pass from the metatheory to the universe of sheaves.)

Incidentally, that the sheaf semantics is not sound with respect to classical
logic allows us to illustrate the failure of certain classical theorems in
constructive mathematics by geometric counterexamples and thereby verify their
unprovability in intuitionistic logic.

We now begin the formal development of the sheaf semantics. More background and
examples from theoretical computer science, analysis and differential geometry
are contained in Ref.~\bracketedrefcite{blechschmidt:custom-tailored}, which should be
regarded as a companion paper to this contribution. We also recommend
Christopher Mulvey's tutorial~\cite{mulvey:repr}, which culminates in a proof
of the Serre--Swan theorem using the sheaf semantics. Standard references for
the sheaf semantics include Refs.
\cite[Chapter~VI]{moerdijk-maclane:sheaves-logic},
\cite[Chapter~14]{goldblatt:topoi},
\cite[Section~5]{caramello:preliminaries},
\cite[Sections~12--15]{streicher:ctcl},
\cite[Chapter~6]{borceux:handbook3},
\cite[Part~D]{johnstone:elephant} and~\bracketedrefcite{shulman:categorical-logic},
which also gives a detailed account of useful type theory.

\paragraph{Definition and basic properties of the sheaf semantics.}
Let~$X$ be a locale. A \emph{formula over an open~$U \in \OOO(X)$} is an
infinitary first-order formula (made up using~${=}\ {\top}\ {\wedge}\ {\bigwedge}\ {\bot}\ {\vee}\
{\bigvee}\ {\Rightarrow}\ {\forall}\ {\exists}$) over the signature which has one sort for
each sheaf~$F$, one constant symbol of sort~$F$ for each section~$s \in F(U)$,
one function symbol~$f : F \to G$ for each morphism of sheaves, and so on. As
usual, we define~$\neg\varphi$ to be an abbreviation of~$(\varphi \Rightarrow
\bot)$.

Table~\ref{table:sheaf-semantics} defines by recursion on the structure of
formulas what it means for a formula~$\varphi$ over an open~$U$ to be
\emph{forced by~$U$} or to be \emph{true over~$U$}, written~``$U \models \varphi$''. The
main differences with the perhaps more familiar Kripke semantics are marked.
The sheaf semantics is also called \emph{Kripke--Joyal semantics}.

For the special case that~$U$ is the largest open of~$X$ we also write~``$\Sh(X)
\models \varphi$'' or~``$X \models \varphi$''.

\begin{table}[ht]
  \tbl{The (first-order fragment of) the sheaf semantics of a locale~$X$}{\begin{tabular}{@{}l@{\ \ }c@{\ \ }l@{}}
    $U \models s =_F t$ &iff& $s = t$ when evaluated as elements of~$F(U)$ \\
    $U \models R(s_1,\ldots,s_n)$ &iff& $\langle s_1,\ldots,s_n \rangle \in
    R(U)$ \\
    && \quad\quad \text{(where $R$ is the relation symbol corresponding to} \\
    && \quad\quad\quad\quad \text{a subsheaf~$R \subseteq F_1 \times \cdots \times F_n$)} \\
    $U \models \top$ &iff& true \\
    $U \models \varphi \wedge \psi$ &iff& $U \models \varphi$ and $U \models \psi$ \\
    $U \models \bigwedge_k \varphi_k$ &iff& for every index~$k$, $U \models \varphi_k$ \\
    $U \models \bot$ &iff& \hcancel{false}{0pt}{3pt}{0pt}{-2pt}\ $U = \bot$ \\
    $U \models \varphi \vee \psi$ &iff& \hcancel{$U \models \varphi$ or $U \models \psi$}{0pt}{3pt}{0pt}{-2pt}\ there exists an open covering $U = \bigvee_i U_i$ \\
    && \quad\quad such that for all indices~$i$: $U_i \models \varphi$ or $U_i \models \psi$ \\
    $U \models \bigvee_k \varphi_k$ &iff& \hcancel{for some index~$k$, $U \models \varphi_k$}{0pt}{3pt}{0pt}{-2pt}\ there exists an open covering $U = \bigvee_i U_i$ \\
    && \quad\quad such that for each index~$i$, $U_i \models \varphi_k$ for some index~$k$ \\
    $U \models \varphi \Rightarrow \psi$ &iff& for all~$V \preceq U$: $V \models \varphi$ implies $V \models \psi$ \\
    $U \models \forall s \? F\+ \varphi(s)$ &iff& for all $V \preceq U$ and sections~$s_0 \in F(V)$: $V \models \varphi(s_0)$ \\
    $U \models \exists s \? F\+ \varphi(s)$ &iff& \hcancel{there exists $s_0 \in F(U)$ such that $U \models \varphi(s_0)$}{0pt}{3pt}{0pt}{-2pt} \\
    && there exists an open covering $U = \bigvee_i U_i$ such that for all~$i$: \\
    && \quad\quad there exists~$s_0 \in F(U_i)$ such that $U_i \models \varphi(s_0)$
  \end{tabular}}
  \label{table:sheaf-semantics}
\end{table}

Some clauses in Table~\ref{table:sheaf-semantics} contain slight abuses of
notation. Namely, if~$\varphi$ is a formula over an open~$U$ and if~$V$ is an
open such that~$V \preceq U$, then~``$V \models \varphi$'' is formally not
well-defined, as~$\varphi$ is not a formula over~$V$. When we still write~``$V
\models \varphi$'', we mean that any constant symbol~$s \in F(U)$ appearing
in~$\varphi$ should be interpreted as its restriction~$s|^U_V \in F(V)$, so
that the resulting formula is a formula over~$V$.

\begin{example}\label{ex:negneg}
For instance, let~$F$ be the sheaf
\[ U \longmapsto \{ f : U \to \CC \,|\, \text{$f$ is continuous and~$f(z)^2 =
z$ for all~$z \in U$} \} \]
on~$X = \CC$. The interpretation of the statement~``$\exists s\?F\+ \top$'' is
false, as on no open which contains the origin a continuous square root
function exists), but the interpretation of~``$\neg\neg(\exists s\?F\+ \top)$''
is true: Unrolling the definition, this statement expresses that\par
\begin{quote}
the only open~$U \preceq X$ which has the property that \\
${\qquad}$ the only open~$V \preceq U$ which has the property that \\
${\qquad\qquad}$ locally on~$V$, there exist continuous square root functions \\
${\qquad}$ is~$V = \bot$ \\
is~$U = \bot$.
\end{quote}
\end{example}

The principal properties of the sheaf semantics is that it is monotone, local
and sound with respect to (infinitary) intuitionistic first-order logic as
defined in Table~\ref{table:first-order-logic} on
page~\pageref{table:first-order-logic}:

\begin{theorem}\label{thm:basic-properties-sheaf-semantics}
Let~$X$ be a locale. Let~$U$ be an open of~$X$. Let~$\varphi$ be
a formula over~$U$.
\begin{enumerate}
\item If~$V$ is an open such that~$V \preceq U$, then~$U \models
\varphi$ implies~$V \models \varphi$.
\item Let~$U = \bigvee_i U_i$ be an open covering. If~$U_i \models
\varphi$ for all~$i$, then~$U \models \varphi$.
\item Let~$\psi$ be a further formula over~$U$. If~$U \models
\varphi$ and if~$\varphi$ entails~$\psi$ intuitionistically, then also~$U
\models \psi$.
\end{enumerate}
\end{theorem}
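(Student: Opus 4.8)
The plan is to prove all three parts by induction on the structure of the formula $\varphi$, following the recursive definition of $U \models \varphi$ in Table~\ref{table:sheaf-semantics}. Parts~(1) and~(2) (monotonicity and locality) are foundational and must be established first, since the soundness proof in part~(3) will invoke them repeatedly; in fact it is cleanest to prove parts~(1) and~(2) simultaneously by a single structural induction, because the local character of some connectives (notably $\vee$, $\bigvee$, $\exists$) is what makes monotonicity work for them, and conversely.

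First I would handle monotonicity and locality together. For part~(1), the atomic cases ($=_F$, $R$) follow because restriction maps are functions and subsheaves are closed under restriction; $\top$ is trivial; $\wedge$ and $\bigwedge$ are immediate from the induction hypothesis; $\bot$ holds because $U = \bot$ and $V \preceq U$ force $V = \bot$; for $\vee$, $\bigvee$ and $\exists$ one meets the given open covering of $U$ with $V$ to get a covering of $V$ and applies the induction hypothesis to each piece; and for $\Rightarrow$ and $\forall$ one uses that $\{W \mid W \preceq V\} \subseteq \{W \mid W \preceq U\}$, so the quantification over smaller opens only shrinks. For part~(2), given $U = \bigvee_i U_i$ with $U_i \models \varphi$ for all $i$, the atomic and $\top$ cases use the sheaf/subsheaf gluing property; $\wedge$ and $\bigwedge$ are termwise; $\bot$ forces each $U_i = \bot$, hence $U = \bigvee_i U_i = \bot$; the local connectives $\vee$, $\bigvee$, $\exists$ are handled by refining each local covering of $U_i$ and assembling a covering of $U$; and $\Rightarrow$, $\forall$ require a small argument: to check $V \models \varphi$ for $V \preceq U$, one restricts along the covering $V = \bigvee_i (V \wedge U_i)$, applies monotonicity (part~(1), already available for subformulas by the simultaneous induction) to each hypothesis $U_i \models \cdots$ to get $V \wedge U_i \models \cdots$, and then uses locality on the conclusion.

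For part~(3), soundness, I would proceed by induction on the derivation of the entailment $\varphi \vdash_{\vec x} \psi$ in the system of Table~\ref{table:first-order-logic}, showing that each rule preserves the semantic validity ``for all $V \preceq U$ and all interpretations of the free variables as sections over $V$, $V \models \varphi$ implies $V \models \psi$''. Equivalently, and more smoothly, one shows that $U \models \varphi \Rightarrow \psi$ whenever $\varphi \vdash \psi$ is derivable; the clause for $\Rightarrow$ in the semantics already builds in the universal quantification over $V \preceq U$. The structural rules (identity, cut, substitution) are routine; the rules for $\wedge$, $\bigwedge$, $\top$, $\bot$, $\vee$, $\bigvee$, $\exists$, $\forall$, $\Rightarrow$ each translate directly into a statement about the semantic clauses, using parts~(1) and~(2) wherever a rule's conclusion must hold over all smaller opens or over the pieces of a covering. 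The genuinely interesting cases are the ones where intuitionistic reasoning interacts with the local (sheaf-theoretic) character of disjunction and existential quantification — for instance, verifying the distributive/elimination rules $\varphi \vee \psi \vdash \chi$ and $\exists y\,\varphi \vdash \psi$, where one is handed a covering witnessing the premise and must produce the conclusion over each piece, then glue via locality.

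The main obstacle I expect is bookkeeping rather than conceptual depth: the abuse of notation flagged after Table~\ref{table:sheaf-semantics}, whereby ``$V \models \varphi$'' for $V \preceq U$ tacitly restricts every section-constant appearing in $\varphi$, must be handled carefully so that the induction hypotheses are applied to genuinely well-formed formulas over the right opens, and so that restriction commutes with substitution of terms for variables (needed for the substitution rule and for $\exists$/$\forall$). Getting the formulation of the inductive invariant right — quantifying over all opens $V \preceq U$ and all section-valued environments, not just over $U$ and global data — is what makes the whole induction go through; once that is set up, every individual rule is a short verification. I would therefore spend the most care on stating the invariant precisely and on the $\vee$-elimination and $\exists$-elimination steps, and treat the remaining rules as routine.
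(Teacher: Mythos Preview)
Your proposal is correct and follows essentially the same approach as the paper: structural induction for parts~(1) and~(2), with the distributive law doing the work for disjunction and existential in~(1) and the sheaf condition for equality in~(2), and induction on derivations for~(3) after strengthening the invariant to quantify over all smaller opens and all section-valued environments for the free variables. The paper proves~(1) and~(2) sequentially rather than simultaneously, but this is only a presentational choice; your identification of the correct generalized invariant for~(3) is exactly what the paper singles out as the key step.
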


\begin{proof}The proof of~(1) is by a routine induction on the structure
of~$\varphi$. For the clauses pertaining disjunction and existential
quantification, the distributive law has to be used.

The proof of~(2) is also a routine induction proof. For the clause pertaining
equality, the sheaf condition has to be used.

To verify statement~(3), it is prudent to generalize the claim slightly: An
induction on the structure of derivations shows that if a sequent~$\varphi
\seq{x_1{:}X_1,\ldots,x_n{:}X_n} \psi$ is derivable in infinitary intuitionistic first-order
logic, then~$U \models \forall x_1\?X_1\+ \ldots \forall x_n\?X_n\+ (\varphi
\Rightarrow \psi)$.
\end{proof}

\begin{remark}The proof of Theorem~\ref{thm:basic-properties-sheaf-semantics}
is constructive. Hence, by unrolling the definition of the sheaf semantics, any
proof which uses the sheaf semantics as a simplifying device can mechanically
be unwound to a fully explicit proof which does not reference sheaves or the
sheaf semantics.

In general, the resulting proofs will be longer and less perspicuous
than the original proofs, in particular if the original proofs employed nested
implications (such as double negations). In these cases, there is a certain
``exotic flavor'' to the unwound proofs, as with tricks with continuations in
computer science. We display some such proofs which are still readable in
Section~\ref{sect:example-applications}.
\end{remark}

\paragraph{Sheaf models.}
\begin{definition}\label{defn:sheaf-model}
A \emph{sheaf model} of a geometric theory~$\TT$ over a locale~$X$ consists of
\begin{enumerate}
  \item a sheaf~$\brak{S}$ on~$X$ for each sort~$S$,
  \item a morphism~$\brak{f} : \brak{S_1} \times \cdots \times \brak{S_n} \to
  \brak{T}$ of sheaves for each function symbol~$f : S_1 \times \cdots \times S_n \to T$ and
  \item a subsheaf~$\brak{R} \subseteq \brak{S_1} \times \cdots \times \brak{S_n}$
  for each relation symbol~$R \hookrightarrow S_1 \times \cdots \times S_n$
\end{enumerate}
such that the axioms of~$\TT$ are validated under the sheaf semantics.
\end{definition}

\begin{example}The sheaf~$\CCC$ of continuous real-valued functions on any
locale~$X$ together with appropriate morphisms is a sheaf model of the theory
of rings. For instance, the interpretation of the axiom
\[ \forall f\?\CCC\+ \forall g\?\CCC\+ f + g = g + f \]
with the sheaf semantics amounts to the condition
\begin{quote}
For any open~$U$ and any continuous function~$f \in \CCC(U)$, \\
${\qquad}$ for any open~$V$ and any continuous function~$g \in \CCC(V)$, \\
${\qquad\qquad}$ $f|^U_V + g = g + f|^U_V$.
\end{quote}
and is readily seen to be fulfilled.
\end{example}

\begin{example}Any sheaf on any locale can be
regarded as a sheaf model of the theory of objects.\end{example}

\begin{example}\label{ex:sets-as-sheaf-models}
Any set-based model~$M$ of a geometric theory~$\TT$ gives rise to a
sheaf model over the one-point locale, by employing the constant sheaf
construction reviewed on page~\pageref{page:constant-sheaf}. An infinitary
first-order formula holds for~$M$ iff it holds for the induced sheaf model.
Hence sheaf models are strictly more general than set-based models.
\end{example}

In the special case of nullary function symbols or relations symbols, by
``$\brak{S_1} \times \cdots \times \brak{S_n}$'' in
Definition~\ref{defn:sheaf-model} we mean the empty product of sheaves, the
\emph{terminal sheaf}~$1$ with~$1(U) = \{\star\}$ for every open~$U$ of~$X$.
Morphisms~$1 \to F$ are in canonical bijection with global sections of~$F$.
Subsheaves~$F \subseteq 1$ are in canonical bijection with the opens of~$X$
(Exercise~\ref{ex:subsheaves-1}).

With this in mind, we can rephrase Definition~\ref{defn:sheaf-model} in the
special case of propositional theories in the following simpler form. A
\emph{sheaf model} of a propositional geometric theory~$\TT$ over a locale~$X$
consists of a family~$(U_\varphi)_\varphi$ of opens of~$X$, one open for each
nullary relation symbol~$\varphi$ of~$\TT$, in such a way that the axioms are
satisfied. For instance, that an axiom~``$\alpha \wedge \beta \vdash \bigvee_i
\gamma_i$'' is satisfied means that~$U_\alpha \wedge U_\beta \preceq \bigvee_i
U_{\gamma_i}$.

In other words, a model of a propositional geometric theory~$\TT$ over a
locale~$X$ is the same as a frame homomorphism~$\OOO(L(\TT)) \to \OOO(X)$, hence a
locale morphism~$X \to L(\TT)$.

For further reference we state a proposition relating truth in a sheaf model
with truth at all points.

\begin{proposition}\label{prop:at-points}
Let~$\TT$ be a geometric theory without any axioms. Let~$M$ be a sheaf model
of~$\TT$ on a locale~$X$ (hence just a \emph{sheaf structure}) and consider
the structures~$M_x$ obtained by computing the stalks at points~$x \in \Pt(X)$.
Let~$\sigma$ be a geometric sequent.
Then:
\begin{enumerate}
\item If~$M$ validates~$\sigma$, then so does~$M_x$ for any point~$x \in \Pt(X)$.
\item The converse holds if~$X$ is spatial.
\end{enumerate}
\end{proposition}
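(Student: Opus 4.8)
The plan is to reduce the whole proposition to a single lemma comparing forcing in the sheaf semantics with Tarski-truth in a stalk. Fix a point $x\in\Pt(X)$ and let $\fff_x=\{U\in\OOO(X)\mid x\inplus U\}$ be the corresponding completely prime filter, so that each stalk $(\brak S)_x$ is the set of germs at $x$ (the filtered colimit of the $\brak S(U)$ over $U\in\fff_x$) and each $\brak f$, $\brak R$ induces on stalks a function, resp.\ a subset, computed representative-wise. The lemma to establish is: for every geometric formula $\varphi(\vec s)$ in a finite context of sorts $\vec S$, every $U\in\fff_x$ and every tuple $\vec a$ with $a_i\in\brak{S_i}(U)$, one has $M_x\models\varphi(\vec a_x)$ if and only if there is some $V\in\fff_x$ with $V\preceq U$ and $V\models\varphi(\vec a|^U_V)$, where $\vec a_x$ denotes the germs and $M_x\models(\cdot)$ is ordinary Tarski-satisfaction in the set-based structure $M_x$. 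I would prove this by induction on the build-up of $\varphi$ out of $=,\top,\bot,\wedge,\vee,\bigvee,\exists$. The atomic cases $=$ and $R(\cdot)$ and the cases $\top,\bot$ use only that $\fff_x$ is a proper upward-closed set together with the representative-wise description of stalks; the case $\wedge$ uses closure of $\fff_x$ under finite meets and monotonicity (Theorem~\ref{thm:basic-properties-sheaf-semantics}(1)); and the cases $\vee,\bigvee,\exists$ are the crux: from an open covering witnessing a disjunction or an existential in the sheaf semantics one selects, using \emph{complete} primeness of $\fff_x$, a single piece still lying in $\fff_x$. (Readers happy to use a little category theory may instead observe that the stalk functor $x^{\ast}\colon\Sh(X)\to\Set$ is the inverse-image part of a geometric morphism and therefore preserves the interpretation of geometric formulas, which is the lemma restated.)

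Given the lemma, part~(1) is short. Write $\sigma\equiv(\varphi\seq{\vec s}\psi)$; unravelling the clauses for $\forall$ and $\Rightarrow$ in Table~\ref{table:sheaf-semantics}, the hypothesis $M\models\sigma$ says exactly that for every open $W$ and every tuple $\vec b$ of sections over $W$, $W\models\varphi(\vec b)$ implies $W\models\psi(\vec b)$. To check $M_x\models\sigma$, take germs $\vec c$ with $M_x\models\varphi(\vec c)$ and pick representatives $\vec a$ over a common open $W\in\fff_x$ (available since the context is finite and $\fff_x$ is closed under finite meets); the lemma gives $V\in\fff_x$, $V\preceq W$, with $V\models\varphi(\vec a|^W_V)$, hence $V\models\psi(\vec a|^W_V)$, hence $M_x\models\psi(\vec c)$ by the lemma again in the forward direction. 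So $M_x$ validates $\sigma$.

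For part~(2), assume $X$ spatial and $M_x\models\sigma$ for every point $x$. For a geometric formula $\varphi(\vec b)$ over an open $W$ set $\brak{\varphi(\vec b)}\defeq\bigvee\{V\preceq W\mid V\models\varphi(\vec b|^W_V)\}$ (the \emph{extension} of $\varphi(\vec b)$); by monotonicity and locality (Theorem~\ref{thm:basic-properties-sheaf-semantics}(1),(2)) this is the largest $V\preceq W$ forcing $\varphi(\vec b)$, so $V\models\varphi(\vec b|^W_V)\Leftrightarrow V\preceq\brak{\varphi(\vec b)}$, and in particular $W\models\varphi(\vec b)\Leftrightarrow\brak{\varphi(\vec b)}=W$. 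Combining this with the lemma yields, for every point $x$ with $x\inplus W$, the equivalence $x\inplus\brak{\varphi(\vec b)}\Leftrightarrow M_x\models\varphi(\vec b_x)$. Now it suffices to show $W\models\varphi(\vec b)\Rightarrow W\models\psi(\vec b)$ for all $W,\vec b$. Assuming $W\models\varphi(\vec b)$, i.e.\ $\brak{\varphi(\vec b)}=W$, spatiality of $X$ reduces the desired inclusion $W\preceq\brak{\psi(\vec b)}$ to checking $x\inplus W\Rightarrow x\inplus\brak{\psi(\vec b)}$ for all points $x$; and for such $x$ we get $M_x\models\varphi(\vec b_x)$ (since $x\inplus W=\brak{\varphi(\vec b)}$), then $M_x\models\psi(\vec b_x)$ (since $M_x\models\sigma$), then $x\inplus\brak{\psi(\vec b)}$. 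Hence $W\preceq\brak{\psi(\vec b)}\preceq W$, so $W\models\psi(\vec b)$.

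The main obstacle is the bookkeeping in the lemma: moving between germs and sections forces one to keep passing to common refinements inside $\fff_x$, and the relation and existential cases rest on knowing that forming stalks is left exact and commutes with the relevant colimits, so that $\vec a_x\in(\brak R)_x$ iff $\vec a|^U_V\in\brak R(V)$ for some $V\in\fff_x$, and likewise for existential witnesses. Once that is in place, the complete-primeness step for $\vee,\bigvee,\exists$ is the only real content, both halves of the proposition follow formally, and spatiality is used in exactly one place: to certify that an inclusion of opens may be tested at points.
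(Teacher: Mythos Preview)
Your proposal is correct; the paper itself does not give a proof but simply cites \cite[Corollary~D1.2.14(ii)]{johnstone:elephant}, so you have supplied what the paper omits. The approach you take---the stalk lemma proved by induction on geometric formulas, using complete primeness of the filter~$\fff_x$ for the $\vee,\bigvee,\exists$ cases, followed by the extension argument with spatiality for the converse---is the standard route and is essentially what one finds in the cited reference; your parenthetical remark that the lemma is just the statement that $x^\ast$ preserves interpretations of geometric formulas is the categorical summary of the same argument.
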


\begin{proof}See~\cite[Corollary~D1.2.14(ii)]{johnstone:elephant}.\end{proof}

In particular, the stalks of sheaf models of a geometric theory~$\TT$ are
set-based models of~$\TT$.

\paragraph{The generic model.}
A geometric theory can well have no set-based models, but might still be
consistent and allow for a diverse range of sheaf models on a host of different
locales. In fact, any propositional geometric theory always has at least one,
and very peculiar, sheaf model, namely its \emph{generic model}.

\begin{definition}The \emph{generic model}~$U_\TT$ of a propositional geometric
theory~$\TT$ is the sheaf model on~$L(\TT)$ given by~$U_\varphi \defeq
[\varphi]$ for all nullary relation symbols~$\varphi$.\end{definition}

Under the view of models as locale morphisms, the generic model~$U_\TT$
corresponds to the identity morphism~$L(\TT) \to L(\TT)$. If a~$\TT$-model~$M$ on
a locale~$X$ is given by a locale morphism~$f : X \to L(\TT)$, we also say
that~``$M$ arises from~$U_\TT$ by pullback along~$f$''.

For instance, there is the \emph{generic Dedekind cut}, the generic model of
the theory~$\TT$ of Dedekind cuts on the classifying locale of~$\TT$, the
localic real line. By Example~\ref{ex:sets-as-sheaf-models}, any specific Dedekind
real~$x$ (that is, any point~$x$ of~$L(\TT)$) gives rise to a model of~$\TT$ on
the one-point locale~$\pt$, corresponding to the locale morphism~$x : \pt \to
L(\TT)$.

For a geometric sequent~$\sigma$ over the signature of a geometric
theory~$\TT$, the following statements are equivalent:
\begin{enumerate}
\item The generic model of~$\TT$ validates~$\sigma$.
\item All sheaf models of~$\TT$ on arbitrary locales validate~$\sigma$.
\item The theory~$\TT$ proves~$\sigma$.
\end{enumerate}
Hence we may understand the phrase~``Let~$M$ be a model of~$\TT$.'' to
implicitly refer to the generic model. The first-order properties of the
generic model need not be shared by all sheaf models and need not be provable
by~$\TT$.

The story of generic models is at the same time tautologous and deep, and these
short paragraphs do not do justice to this intriguing aspect of the interplay
between topology (in the form of locales) and logic (in the form of
propositional geometric theories). We strongly
recommend the accessible account by Steve Vickers~\cite{vickers:continuity} for a fuller
picture.

\begin{remark}The traditional way of constructing a map of topological spaces~$f : Y \to
Y'$ proceeds in three steps: (1) Write down a map of the underlying sets. (2)
Identify the preimages~$f^{-1}[U]$. (3) Verify that these preimages are open.

Continuing Remark~\ref{rem:traditional-spaces}, locales provide more
efficient means for constructing morphisms: To write down a morphism of
locales~$X \to X'$, where~$X'$ is given as the classifying locale of some
theory~$\TT'$, we only need to specify a~$\TT'$-model~$M'$ on~$X$; no separate
verification of continuity is required.

If~$X$ classifies a theory~$\TT$, such a specification is usually carried out
by constructing~$M'$ in terms of the generic model~$U_\TT$.
For instance, if~$f : A \to B$ is a ring homomorphism, we obtain a locale
morphism~$\Spec(B) \to \Spec(A)$ by specifying the model~$([D(f(x))])_{x \in
A}$ of the theory of prime filters of~$A$ on~$\Spec(B)$. On the level of
points, this morphism induces the expected map~$\fff \mapsto f^{-1}[\fff]$.
\end{remark}

\subsection*{Exercises}
\addcontentsline{toc}{subsection}{Exercises}

\begin{exercise}[The theory of finite objects]%
\begin{alphlist}[(c)]
\item Extend the geometric theory of objects
(page~\pageref{item:theory-of-objects}) by relation symbols and axioms, but not by
constant symbols, to ensure that the set-based models are precisely the
two-element sets.
\item Write down a geometric theory whose set-based models are precisely the
(Kuratowski-)finite sets.\smallskip

{\scriptsize\emph{Hint.} Employ countably many relation symbols~$R_n
\hookrightarrow X \times \cdots \times X$, where~$R_n(x_1,\ldots,x_n)$ should
signify that the elements~$x_1,\ldots,x_n$ exhaust all elements of~$X$. A
solution to this exercise is given in Ref.~\cite[Example~D1.1.7(k)]{johnstone:elephant}.\par}
\end{alphlist}
\end{exercise}

\begin{exercise}[Issues of size]%
Working in~\textsc{izf}, show that the Lindenbaum algebra of a propositional
geometric theory~$\TT$ can be realized as a set, even though the class of
geometric formulas is typically a proper class.\smallskip

\noindent{\scriptsize\emph{Hint.} Use that any geometric formula is provably equivalent
to one of the form~$\bigvee_i \varphi_i$, where each formula~$\varphi_i$ is a
conjunction of nullary relation symbols.\par}
\end{exercise}

\begin{exercise}[Discrete locales]%
\begin{alphlist}[(b)]
\item Find a theory which is classified by the two-point locale (the locale
associated to the discrete two-point topological space).
\item Let~$M$ be a set and endow it with the discrete topology. Find a theory
which is classified by~$L(M)$.
\end{alphlist}
\end{exercise}

\begin{exercise}[The localic unit interval]%
Extend the theory of Dede\-kind cuts in such a way that its models are those
Dedekind reals which are elements of~$[0,1]$.
\end{exercise}

\begin{exercise}[More on exotic locales]%
\label{ex:exotic-locales}%
\begin{alphlist}[(d)]
\item Define the locale of partial surjections~$\NN \twoheadrightarrow \NN^\NN$.\smallskip

{\scriptsize\emph{Note.} In Russian constructivism, as for instance reigning in
the ef{}fective topos~\cite{hyland:effective-topos,phoa:effective,bauer:c2c}, the Baire space~$\NN^\NN$ is subcountable.
However, in this setting any function~$\NN \to \NN$ is computable.
The universe of sheaves over this locale was the first example of a setting in
which the Baire space is subcountable and still any external number-theoretic
function can be imported~\cite[Section~4.3]{scedrov:forcing}.\par}
\item Define the locale of injections~$\RR \hookrightarrow \NN$.\smallskip

{\scriptsize\emph{Note.} In classical mathematics, this locale does not have
any points. However there is a variant of the ef{}fective topos based on the
infinite-time Turing machines by Joel David Hamkins and Andy
Lewis~\cite{hamkins-lewis:ittm} in which an injection~$\RR \hookrightarrow \NN$
exists~\cite{bauer:injection}. See
Ref.~\cite[Section~2.2]{blechschmidt:custom-tailored} for an exposition of the
proof.\par}

\item Define the locale of ``open covers of~$[0,1]$ by intervals with rational
endpoints such that the union of any finite number of those intervals has total
Lebesgue measure less than~$\frac{1}{2}$''.\smallskip

{\scriptsize\emph{Note.} In the universe of sheaves over this locale, the set
of unit interval Dedekind real numbers fails to be (Heine--Borel) compact and
the localic real line fails to be
spatial~\cite[Example~D4.7.13]{johnstone:elephant}.\par}
\item An open~$U$ of a locale~$X$ is \emph{complemented} iff there is an
open~$V$ such that~$U \vee V = \top$ and~$U \wedge V = \bot$. A locale is
\emph{zero-dimensional} iff any open is a join of complemented opens. Assuming
that any two reals are equal or not, show that the locale of surjections~$\NN
\twoheadrightarrow \RR$ is zero-dimensional.
\end{alphlist}
\end{exercise}

\begin{exercise}[Models as points]%
Let~$\TT$ be a propositional geometric theory. Verify that the points
of~$L(\TT)$ are in canonical bijection with the set-based models of~$\TT$.
\end{exercise}

\begin{exercise}[The product of locales]\label{ex:products-of-locales}%
Let~$X$ and~$X'$ be locales, classifying propositional geometric
theories~$\TT$ and~$\TT'$ respectively. Let~$\TT + \TT'$ be the combined theory
(whose set of sorts is the disjoint union of the sets of sorts of~$\TT$
and~$\TT'$, and so on). Show that~$L(\TT + \TT')$ is the product of~$X$
and~$X'$ in the category of locales.
\end{exercise}

\begin{exercise}[The coproduct of locales, revisited]%
Let~$X$ and~$X'$ be locales, classifying some propositional geometric
theories~$\TT$ and~$\TT'$ respectively. Find a theory in terms of~$\TT$
and~$\TT'$ which is classified by~$X \amalg X'$.
\end{exercise}

\begin{exercise}[Spatiality of the localic reals]%
\label{ex:spatiality-reals}%
\newcommand{\fa}{\ensuremath{(\!\cdot}}%
\newcommand{\fb}{\ensuremath{\cdot\!)}}%
Let~$\RR$ be the localic real line, defined as the classifying locale of the
theory~$\TT$ of Dedekind cuts as introduced on
page~\pageref{par:dedekind-cuts}. For rational numbers~$x,y$, let~``$\fa x,y
\fb$'' denote the formula~$\alpha_x \wedge \beta_y$. Let~$(x,y)$ denote the
open interval~$\{ \xi \in \Pt(\RR) \,|\, x < \xi < y \}$ in the set of Dedekind reals.
\begin{alphlist}[(f)]
\item Extend the notation to allow for~$x,y=\pm\infty$.
\item Show that the canonical topology on~$\Pt(\RR)$, defined in
Exercise~\ref{ex:canonical-topology}, coincides with the usual Euclidean
topology.
\item Show that any open of~$\RR$ can be written in the form~$\bigvee_i
[\fa x_i,y_i \fb]$ with~$x_i,y_i \in \QQ \cup \{\pm\infty\}$.
\item Let rational numbers be given such that~$(x,y) \subseteq \bigcup_i
(x_i,y_i)$ in~$\Pt(\RR)$. Assume that this open covering has a Lebesgue number.
Show that~$\TT$ proves~$\fa x,y \fb \vdash \bigvee_i \fa x_i,y_i \fb$.
\item Assume that closed intervals in the set of Dedekind reals are
(Heine--Borel) compact. Then redo part~(d), but without the assumption on the
existence of a Lebesgue number.
\item Under the same assumption as in part~(e), show that the localic real line
is spatial.
\end{alphlist}
{\scriptsize\emph{Note.} Exercise~\ref{ex:exotic-locales} lists settings in
which the assumption in part~(e) is not satisfied. The converse in part~(f)
also holds. Incidentally, the spatiality of the localic real line is an example
for a statement which requires the full version of the Heine--Borel property,
not only Heine--Borel for countable covers as it is often studied in reverse
mathematics~\cite{normann-sanders:uncountable}.\par}
\end{exercise}

\begin{exercise}[The localic Galois group of a field extension]%
Let~$L|k$ be a field extension. The \emph{localic Galois group}~$\Gal(L|k)$
is the classifying locale of the theory of automorphisms~$L \to L$ which
fix~$k$, as described on page~\pageref{par:theory-automorphisms}.
\begin{alphlist}[(g)]
\item Explain how to obtain a morphism~$\Gal(L|k) \to \Gal(L|k)$ which on the
level of points should send an automorphism~$\sigma$ to its inverse~$\sigma^{-1}$.
\item Explain how to obtain a morphism~$\Gal(L|k) \times \Gal(L|k) \to \Gal(L|k)$ which on the
level of points should send a pair~$\langle \sigma, \tau \rangle$ to~$\sigma \circ \tau$.
\item Assume from now on that~$L$ is a \emph{geometric field} in the sense that any
element is zero or invertible. Show that the axiom expressing injectivity can
be dropped without changing the set of provable sequents.
\item Let~$f \in k[X]$ be a monic polynomial such that~$f(X) =
(X-x_1)\cdots(X-x_n)$ over~$L$. Let~$x$ be one of the zeros of~$f$. Show that
the theory proves~$\top \vdash \bigvee_{i=1}^n \sigma_{x,x_i}$.
\item Now furthermore assume that~$L$ is \emph{Galois over}~$k$ in the sense that
any element of~$L$ is the zero of a monic polynomial~$f \in k[X]$ which is
separable (that is,~$(f,f') = (1) \subseteq k[X]$) and splits over~$L$ into
linear factors. Show that the axioms expressing surjectivity can be dropped.
\item Show that the topological space of points of~$\Gal(L|k)$ is canonically
homeomorphic to the topological Galois group of~$L|k$.
\item Show that~$\Gal(L|k)$ is spatial in classical mathematics.
\end{alphlist}
\end{exercise}

\begin{exercise}[Coherent locales and coherent theories]%
\label{ex:coherent}%
An open~$U$ of a locale is \emph{compact} if and only if, whenever~$U
\preceq \bigvee_{i \in I} V_i$, there is a (Kuratowski-)finite subset~$I'
\subseteq I$ such that~$U \preceq \bigvee_{i \in I'} V_i$. A locale is
\emph{coherent} iff its compact opens are closed under finite meets and if
every open is a join of compact opens. A geometric formula is \emph{coherent} iff only
finite disjunctions occur in it. A geometric theory is \emph{coherent}
iff all of its axioms consist of coherent formulas.
\begin{alphlist}[(d)]
\item Show that any coherent locale classifies some coherent theory.
\item Let~$\TT$ be a propositional coherent theory. Show that~$L(\TT)$ is
coherent.\smallskip

{\scriptsize\emph{Hint.} Proceed similarly as in the proof of
Theorem~\ref{thm:baby-barr}. For coherent formulas~$\alpha$ and geometric
formulas~$\varphi$, define a semantics by the clauses below. Show by an induction on the
structure of geometric derivations that, if~$\TT$ proves a geometric
sequent~$\varphi \vdash \psi$, then for all coherent formulas~$\alpha$ such
that~$\alpha \models \varphi$, $\alpha \models \psi$. Show for coherent
formulas~$\alpha,\beta$ that~$\alpha \models \beta$ iff~$\TT$ proves~$\alpha
\vdash \beta$. Use these ingredients to show that, if~$\alpha$ is a coherent
formula and~$(\beta_i)_{i \in I}$ is a family of coherent formulas, if~$\TT$
proves~$\alpha \vdash \bigvee_{i \in I} \beta_i$ then there is a
(Kuratowski-)finite subset~$I' \subseteq I$ such that~$\TT$ proves~$\alpha
\vdash \bigvee_{i \in I'} \beta_i$.

\begin{center}\tablefont
\begin{tabular}{@{}l@{\ \ }c@{\ \ }l@{}}
  $\alpha \models \beta$ &iff& $\TT$ proves~$\alpha \vdash \beta$ \qquad(for~$\beta$ a nullary relation symbol) \\
  $\alpha \models \top$ &iff& true \\
  $\alpha \models \varphi \wedge \psi$ &iff& $\alpha \models \varphi$ and $\alpha \models \psi$ \\
  $\alpha \models \bigvee_i \varphi_i$ &iff& there are coherent
  formulas~$\alpha_1,\ldots,\alpha_n$ such that \\ &&\qquad $\TT$ proves~$\alpha \dashv\vdash
  \alpha_1 \vee \cdots \vee \alpha_n$ and such for each~$k \in \{1,\ldots,n\}$,
  \\ &&\qquad\qquad there is an index~$i \in I$ such that~$\alpha_k \models \varphi_i$
\end{tabular}
\end{center}}

\item Show that an arbitrary (set-indexed) product of coherent locales is
coherent.

\item \emph{Deligne's completeness theorem} states that coherent locales (and
even coherent \emph{toposes}) are spatial. Show that Deligne's
result implies that propositional coherent theories
without any set-based models are inconsistent. (This is usually phrased as
``consistent theories have models''.)
\end{alphlist}
\end{exercise}

\begin{exercise}[Subsheaves of the terminal sheaf]%
\label{ex:subsheaves-1}%
Let~$X$ be a locale.
\begin{alphlist}[(b)]
\item Show that any subsheaf~$F \subseteq 1$ of the terminal sheaf on~$X$ gives rise
to an open of~$X$ by gluing all its sections.
\item Show that the construction of part~(a) yields a canonical bijection of
the subsheaves of~$1$ and the opens of~$X$.
\end{alphlist}
\end{exercise}

\begin{exercise}[A geometric interpretation of double negation]%
\label{ex:negneg-interpretation}%
Let~$\varphi$ be a formula over an open~$U$ of a locale~$X$.
\begin{alphlist}[(c)]
\item Show that there is a largest open,
denoted~``$\llbracket\varphi\rrbracket$'', such that~$U \models \varphi$.\smallskip

{\scriptsize\emph{Hint.} Consider the join of all opens~$V$ such that~$V
\models \varphi$.\par}

\item Show that~$X \models \neg\neg\varphi$ iff~$\llbracket\varphi\rrbracket$
is dense in the sense of Exercise~\ref{ex:dense-opens}.

\item Give a condition on a locale or a topological space such that its sheaf
semantics validates the law of excluded middle (or equivalently the law of
double negation elimination).
\end{alphlist}
\end{exercise}

\begin{exercise}[Unique local existence is global existence]%
\label{ex:global-existence}%
Let~$F$ be a sheaf on a locale~$X$. Let~$\varphi$ be a formula over~$X$. Assume
that~$\Sh(X) \models \exists!s\?F\+ \varphi(s)$, that is
\[ \Sh(X) \models \exists s\?F\+ \varphi(s) \quad\text{and}\quad
  \Sh(X) \models \forall s\?F\+ \forall t\?F\+ (\varphi(s) \wedge \varphi(t)
  \Rightarrow s = t). \]
Show that for any open~$U \in \OOO(X)$, there is a unique section~$s \in F(U)$
such that~$U \models \varphi(s)$.
\end{exercise}

\begin{exercise}[The sheaf of real functions as a field]%
Let~$X$ be a topological space. Let~$\CCC$ be the sheaf of continuous real-valued
functions on~$X$.
\begin{alphlist}[(d)]
\item Let~$U$ be an open of~$X$. Let~$f \in \CCC(U)$. Show using the result
of Exercise~\ref{ex:global-existence} that~$U \models (\exists g\?\CCC\+ fg =_\CCC
1)$ iff there is a function~$g \in \CCC(U)$ such that~$fg = 1$.
\item Show that~$\CCC$ is a field in that
$X \models \forall f\?\CCC\+
  \bigl(\neg(\exists g\?\CCC\+ fg = 1)\bigr) \Rightarrow f = 0$.
\item Give an example of space~$X$ such that it is not the case that~$\CCC$ is a
field in the stronger sense that
$\Sh(X) \models \forall f\?\CCC\+ (\exists g\?\CCC\+ fg = 1) \vee f = 0$.
\item Let~$X = \CC$ and let~$\OOO$ be the sheaf of holomorphic functions on~$X$.
Show that, in classical mathematics, the sheaf~$\OOO$ is \emph{discrete} in that
$\Sh(\CC) \models \forall f\?\OOO\+ \forall g\?\OOO\+ f = g \vee \neg(f = g)$.
\end{alphlist}
\end{exercise}

\begin{exercise}[The failure of the fundamental theorem of algebra]%
\label{ex:fta}%
Let~$F$ be the sheaf of continuous complex-valued functions on~$\CC$. Give an
example of a function~$f : \CC \to \CC$ such that it is not the case that
\[ \Sh(\CC) \models \exists z\?F\+ z^2 = f. \]
{\scriptsize\emph{Note.} The sheaf~$F$ can be obtained by constructing, in the
universe of sheaves on~$\CC$, the set of pairs of Dedekind reals. Hence the
example demonstrates that the usual formulation of the fundamental theorem of
algebra does not admit a constructive proof. (However, there are replacements
available~\cite{richman:fta}, and the fundamental theorem does apply to the
Cauchy complex numbers~\cite{ruitenburg:roots} and under weak choice
principles~\cite{bridges-richman-schuster:wcc}).\par}
\end{exercise}

\begin{exercise}[The least number principle]%
\label{ex:least-number-principle}%
\begin{alphlist}[(e)]
\item A subset~$M \subseteq X$ of a set~$X$ is \emph{detachable} iff for any
element~$x \in X$, either~$x \in M$ or~$x \not\in M$. Show that any inhabited
detachable subset~$M \subseteq \NN$ contains a minimal element.
\item Show that for any inhabited set~$M$ of natural numbers, it is \notnot the
case that there is a number~$n \in M$ such that~$n \leq m$ for all~$m \in M$.
\item Show that for every finitely generated vector space~$V$ over a field in the
sense that nonunits are zero, it is \notnot the case that~$V$ has a basis.
\item Verify that the assumption that any inhabited set of natural numbers
contains a minimal element implies the law of excluded middle.\smallskip

{\scriptsize\emph{Hint.} Consider a set of the form~$\{ n \in \NN \,|\, n = 1
\vee \varphi \}$.\par}
\item Give an explicit example of a subsheaf~$F$ of the constant
sheaf~$\underline{\NN}$ on a topological space~$X$ such that
\[ \Sh(X) \models \exists n\?F\+ \top
  \quad\text{but}\quad
  \Sh(X) \not\models \exists n\?F\+ \forall m\?F\+ n \leq m. \]
\end{alphlist}
\end{exercise}


\section{Applications in constructive algebra}
\label{sect:appl}

We recall from the introduction one way how locales and sheaves yield new
reduction techniques for constructive algebra:
\begin{enumerate}
\item Instead of replacing a given ring with another ring (for instance a ring~$A$ with
one of its stalks~$A_\ppp$ at a minimal prime ideal), we replace a given ring
with a \emph{sheaf of rings}.
\item We then maintain the convenient illusion that we are working with a plain old ring
instead of a sheaf by employing the sheaf semantics.
\end{enumerate}

The goal of this section is to give a leisurely introduction to this procedure.
We recall in Section~\ref{sect:algebraic-preliminaries} algebraic
preliminaries. Section~\ref{sect:a-remarkable-sheaf} explores the sheaf to use.
There is an interesting origin story to this sheaf, which we tell in
Section~\ref{sect:origins}. For computations, it is vital to have an explicit
description of this sheaf available; such a description is presented in
Section~\ref{sect:understanding}. Finally, we illustrate how to use the technique in
a series of concrete examples in Section~\ref{sect:example-applications}.

\subsection{Algebraic preliminaries}
\label{sect:algebraic-preliminaries}

By \emph{ring}, we mean commutative ring with unit. We do not require the unit
to be distinct from the zero; indeed, trivial rings have intriguing
applications~\cite{richman:trivial-rings}.

A ring is \emph{reduced} iff zero is its only nilpotent element; that is, if~$x^n = 0$
implies~$x = 0$.

An \emph{ideal} is a subset~$\aaa \subseteq A$ such that~$0 \in \aaa$, $(x \in
\ppp) \wedge (y \in \ppp) \Rightarrow (x + y \in \ppp)$ and $(x \in
\ppp) \Rightarrow (xy \in \ppp)$ (for any~$y \in A$). A family~$(x_i)_i$ of
ring elements generates an ideal, also denoted~``$(x_i)_i$'', consisting of
(Kuratowski-finite) sums of multiples of the~$x_i$.

An ideal~$\ppp$ is \emph{prime} iff~$1 \not\in \ppp$ and~$(xy
\in \ppp) \Rightarrow (x \in \ppp) \vee (y \in \ppp)$.
A \emph{minimal prime ideal} is a prime ideal~$\ppp$ such that for any prime
ideal~$\qqq$ with~$\qqq \subseteq \ppp$, $\qqq = \ppp$.

A subset~$\fff \subseteq A$ is a \emph{prime filter} iff
$0 \not\in \fff$, $(x + y \in \fff) \Rightarrow (x \in \fff) \vee (y \in
\fff)$, $1 \in \fff$ and $(xy \in \fff) \Leftrightarrow (x \in \fff) \wedge (y
\in \fff)$.

An ideal~$\aaa \subseteq A$ is a \emph{radical ideal} iff~$x^n \in \aaa$
implies~$x \in \aaa$ for any~$x \in A$ and any natural number~$n$. The
\emph{radical}~$\sqrt{\aaa}$ of an arbitrary ideal~$\aaa$ is the radical
ideal~$\{ x \in A \,|\, \exists n \in \NN\+ x^n \in \aaa \}$.

A ring is \emph{local} iff a finite sum is invertible only if one of its
summands also is. More precisely, this means that~$1 \neq 0$ and that
whenever~$a + b$ is invertible, $a$ is invertible or~$b$ is invertible.
This definition is an elementary rephrasing of the textbook definition
according to which a ring is local iff it has exactly one maximal ideal. The
elementary definition works better in constructive contexts.

If~$S$ is a multiplicatively closed subset of a ring~$A$, the
\emph{localization}~$A[S^{-1}]$ is the ring of formal fractions~$\frac{a}{s}$ with~$a
\in A$ and~$s \in S$, where two such fractions~$\frac{a}{s}$ and~$\frac{b}{t}$ are deemed
equivalent iff there is an element~$u \in S$ such that~$uta = usb$. We do not
require that~$0 \not\in S$; instead we plainly observe that~$A[S^{-1}]$ is the
zero ring iff~$0 \in S$. We also remark that~$\frac{a}{s} = 0$ in~$A[S^{-1}]$
iff~$ua = 0$ for some element~$u \in S$.

An important special case is \emph{localizing away from an element}: If~$f$ is
an element of~$A$, then~$A[f^{-1}]$ is~$A[\{1,f,f^2,f^3,\ldots\}^{-1}]$. This
ring is the zero ring iff~$f$ is nilpotent.

The \emph{stalk at a prime ideal}~$\ppp \subseteq A$ is the
localization~$A_\ppp \defeq A[(A \setminus \ppp)^{-1}]$. Assuming de Morgan's
laws, such stalks are local rings. The \emph{stalk at a prime filter}~$\fff \subseteq A$ is
defined as~$A_\fff \defeq A[\fff^{-1}]$ and is a local ring even
constructively. Conversely, if a localization~$A[S^{-1}]$ is local then~$S$ can
be saturated to a prime filter.

A property of rings is \emph{localization-stable} iff it is inherited by any
localization. For instance, if~$A$ is a reduced ring, then so is any
localization~$A[S^{-1}]$.

\subsection{A remarkable sheaf}
\label{sect:a-remarkable-sheaf}

Let~$A$ be a ring. Then there is a certain locale~$X$ on which a
certain ``mirror image'' of~$A$ exists, a sheaf~$A^\sim$ of rings. On the one hand,
this mirror image is ``close to~$A$'', such that studying~$A^\sim$ pays off for
learning about~$A$. On the other hand, it has much better properties than~$A$
has.

\begin{center}
\vbox{$A^\sim$ is close to~$A$: \\[0.6em]
\framebox{\parbox{0.9\textwidth}{\vspace*{-0.7em}
\begin{Alphlist}[(C)]
\item\label{item:iso}
There is a canonical isomorphism~$A \to A^\sim(X)$ of rings.
\item\label{item:loc-stable}
$A^\sim$ inherits any property of~$A$ which is
localization-stable.$^\star$
\item\label{item:stalks}
A geometric sequent holds for~$A^\sim$ iff$^{\star\!\star}$ it holds for all
stalks~$A_\fff$.
\end{Alphlist}\vspace*{-0.7em}}}}
\hfill $^\star$precise formulation in
Proposition~{\ref{prop:immediate-consequences}}{\quad}\phantom{\,\,}\\
\hfill $^{\star\!\star}$assuming \BPIT for the ``if'' direction{\quad}
\bigskip

\vbox{$A^\sim$ has better properties than~$A$: \\[0.6em]
\framebox{\parbox{0.9\textwidth}{\vspace*{-0.7em}
\begin{Alphlist}[(D)]
\addtocounter{enumi}{3}
\item\label{item:local} $A^\sim$ is a local ring.
\end{Alphlist}

Assuming that~$A$ is reduced: \\[-1.8em]
\begin{Alphlist}[(G)]
\addtocounter{enumi}{4}
\item\label{item:field} $A^\sim$ is a field: $\forall x\?A^\sim\+ \Bigl((\neg(\exists y\?A^\sim\+ xy = 1)) \Rightarrow x = 0\bigr)$.
\item\label{item:negneg-stable} $A^\sim$ has $\neg\neg$-stable equality:
$\forall x,y\?A^\sim\+ \neg\neg(x = y) \Rightarrow x = y$.
\item\label{item:noetherian} $A^\sim$ is \emph{anonymously Noetherian}.
\end{Alphlist}\vspace*{-0.7em}}}}
\end{center}

Recall that when we ascribe some ring-theoretic property to~$A^\sim$, we
are implicitly using the sheaf semantics. For instance, when we say
that~$A^\sim$ is a local ring, we actually mean
\begin{multline*}
  \Sh(X) \models \neg(1 =_{A^\sim} 0) \quad\text{and} \\
  \Sh(X) \models \forall x,y\?A^\sim\+ \bigl((\exists z\?A^\sim\+ (x + y)z
  =_{A^\sim} 1) \Longrightarrow \\
  (\exists
  z\?A^\sim\+ xz =_{A^\sim} 1) \vee (\exists z\?A^\sim\+ yz =_{A^\sim} 1)\bigr).
\end{multline*}

The locale~$X$ on which this theater plays out is none other than the
\emph{spectrum of~$A$} from algebraic geometry, and the sheaf~$A^\sim$ is the
sheaf of regular functions, also denoted~``$\OOO_{\Spec(A)}$''. We will give details on its construction in
Section~\ref{sect:origins}, but first want to reflect on the displayed
statements.

\paragraph{A reification of all stalks.}
Statement~(\ref{item:stalks}) explains that, to a first approximation, the
sheaf~$A^\sim$ is a reification of all the stalks of~$A$ into a single coherent
entity: The sheaf~$A^\sim$ has exactly those properties which are shared by all
the stalks~$A_\fff$. For instance, the sheaf~$A^\sim$ is an integral domain iff
all the stalks are.

But crucially, this metatheorem only pertains to properties which can be
formulated as geometric sequents. Being an integral domain is such a property
($1 = 0 \vdash \bot$, $xy = 0 \seq{x:R,y:R} x = 0 \vee y = 0$), but many others
are not. The reductive power of harnessing~$A^\sim$ stems from the fact
that~$A^\sim$ enjoys properties which are not shared by~$A$, by its stalks or
indeed by any of its localizations or quotients.

\paragraph{Back and forth.}
Statement~(\ref{item:iso}) is a kind of conservativity result.
For instance, if we
want to show that a certain ring element~$x \in A$ is zero, we can just as well
show that its mirror image in~$A^\sim$ is zero.

For this step to be useful, we need to know what~$A^\sim$ looks like.
Statements~(\ref{item:stalks}) and~(\ref{item:loc-stable}) both allow us to
transfer our knowledge about~$A$ to~$A^\sim$; while
statement~(\ref{item:stalks}) requires a syntactic restriction on the property
under consideration, statement~(\ref{item:loc-stable}) requires a semantic one.

Not only does the ring~$A$ have a mirror image as a sheaf, also
any~$A$-module~$M$ does. Its mirror image is the \emph{quasicoherent
sheaf}~$M^\sim$ from algebraic geometry. As with~$A$ and~$A^\sim$, there is a
close connection between~$M$ and~$M^\sim$; for instance, the module~$M$ is
finitely generated over~$A$ iff~$M^\sim$ is finitely generated over~$A^\sim$
(Exercise~\ref{ex:bridging-modules}).

\paragraph{The field property.}
The fact that~$A^\sim$ is a field is where~$A^\sim$ derives its main usefulness as a reduction technique
from. While statements~(\ref{item:loc-stable}), (\ref{item:stalks}) and
(\ref{item:local}) are immediate consequences of the way~$A^\sim$ is
constructed (Proposition~\ref{prop:immediate-consequences}), the field property~(\ref{item:field})
requires an actual computation for its verification and was a surprising
discovery by Christopher Mulvey in the 1970s.

Myles Tierney commented around that time that the field property ``is surely important,
though its precise significance is still somewhat obscure---as is the case with
many such nongeometric formulas''~\cite[p.~209]{tierney:spectrum}. Even though
the property has been known for a long time, only recently was it put in a
wider context (recognizing it as a small shadow of algebraic geometry's
\emph{quasicoherence}) and its usefulness for constructive algebra
appreciated~\cite[Sections~3.3 and~11.4]{blechschmidt:phd}.

The field property is a unique feature of~$A^\sim$; it cannot be mimicked by
classical techniques in commutative algebra. Several constructions come close,
but fail in other ways:
\begin{enumerate}
\item The stalks~$A_\mmm$ at maximal ideals: These are not fields.
\item The stalks~$A_\ppp$ at minimal prime ideals: These are fields, but a ring
element~$x \in A$ which is zero in all these stalks~$A_\ppp$ is not necessarily
zero in~$A$. Also, properties of the stalks tend to only spread to
dense opens of the spectrum, if they do so at all, while properties of~$A^\sim$
are in close connection with properties of~$A$. For instance, that all
stalks~$M_\ppp$ of an~$A$-module~$M$ at minimal (or, for that matter, all) prime ideals are finitely
generated does not imply that~$M$ is finitely generated, while finite
generation of~$M^\sim$ does imply finite generation of~$M$.
\item The quotients~$A/\mmm$: These are fields, but an element~$x \in A$ which is
zero in all these quotients~$A/\mmm$ is not necessarily zero in~$A$, and
passing to~$A/\mmm$ is not exact (does not preserve injectivity of linear
maps).
\item The quotients~$A/\ppp$ modulo the prime ideals: These are merely integral
domains.
\end{enumerate}

The field condition displayed in statement~(\ref{item:field}) is not the only
field condition used in constructive algebra. Perhaps the most important such
condition is that any element is zero or invertible, not least because this
condition can be expressed as a geometric formula. It is stronger than the
condition that nonunits are zero and is satisfied by~$A^\sim$ iff~$A$ is reduced
and of Krull dimension~$\leq 0$~\cite[Proposition~2.13]{blechschmidt:phd}.

\paragraph{Double negation stability.}\label{par:double-negation-stability}
Generally in constructive mathematics,
two elements being \notnot equal does not imply that they are actually equal:
\begin{enumerate}
\item Sheaves give counterexamples: The interpretation of the statement that
two given sections of a sheaf are \notnot equal with the sheaf semantics is
that they agree on a dense open (Exercise~\ref{ex:negneg-interpretation}). This
does not generally imply that they agree on all of their domain.
\item Realizability gives counterexamples: Realizers of negated statements are
always uninformative; in the case that a statement~$\varphi$ has a realizer at
all, the statement~$\neg\neg\varphi$ is realized by any number whatsoever.
Only in certain cases, where a realizer can be algorithmically reconstructed
from the mere promise that a realizer exists, is~$\neg\neg\varphi \Rightarrow
\varphi$ realizable.
\end{enumerate}

Hence statement~(\ref{item:negneg-stable}) comes at a surprise. It is useful to
import some of classical logic into the constructive setting of sheaves
on~$\Spec(A)$. For instance, the law of excluded middle is constructively valid
in the form
\[ \neg\neg(\varphi \vee \neg\varphi). \]
Combined with the observation
\[ \bigl(\neg\neg\chi \wedge (\chi \Rightarrow \neg\neg\psi)\bigr) \Longrightarrow \neg\neg\psi \]
we can hence freely use the law of excluded middle in constructive arguments --
but at the price that the conclusion~$\neg\neg\psi$ is doubly negated, which is
why typically this observation is not of much use.

However, double negation stability of equality does allow us to conclude~$\psi$
from the a~priori much weaker premiss~$\neg\neg\psi$, if~$\psi$ is of the
form~$(s =_{A^\sim} t)$.

This stability is a consequence of the field property. In case that the
ring~$A$ is not reduced and hence~$A^\sim$ is not a field, a substitute is given
in Exercise~\ref{ex:local-global}. We stress that the proof of
statement~(\ref{item:negneg-stable}) does not require double negation stability
of~$A$ on the metalevel; in fact, it is fully constructive.

\begin{proposition}If~$A$ is reduced, then the sheaf of rings~$A^\sim$ has $\neg\neg$-stable equality:
\[ \Spec(A) \models \forall x,y\?A^\sim\+ \neg\neg(x = y) \Rightarrow x = y. \]
\end{proposition}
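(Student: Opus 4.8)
The plan is to reduce the claim, by purely intuitionistic reasoning inside the internal language of $\Sh(\Spec(A))$, to the field property~(\ref{item:field}) of $A^\sim$ together with the elementary logical fact that a triply-negated proposition coincides with its single negation. Since every step will be intuitionistically valid, the soundness part of Theorem~\ref{thm:basic-properties-sheaf-semantics} then hands us the forced statement over $\Spec(A)$; throughout we may treat $A^\sim$ as an ordinary reduced ring.

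So, working internally, let $x,y \? A^\sim$ be given with $\neg\neg(x = y)$; we must deduce $x = y$. Setting $z \defeq x - y$, we have $x = y \Leftrightarrow z = 0$ in the ring $A^\sim$, hence $\neg\neg(z = 0)$, and it suffices to obtain $z = 0$. Let $Q$ abbreviate the proposition $\exists w\?A^\sim\+ zw = 1$. First I would check that $z = 0 \Leftrightarrow \neg Q$: the implication $\neg Q \Rightarrow z = 0$ is exactly the field property~(\ref{item:field}) instantiated at $z$ (this is where reducedness of $A$ enters), while the converse $z = 0 \Rightarrow \neg Q$ holds because from $z = 0$ and $zw = 1$ one gets $1 = 0$, contradicting the nontriviality $\neg(1 =_{A^\sim} 0)$ that is part of $A^\sim$ being a local ring~(\ref{item:local}). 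Applying double negation to this equivalence, $\neg\neg(z = 0)$ is the same as $\neg\neg\neg Q$; and $\neg\neg\neg Q \Rightarrow \neg Q$ holds in intuitionistic logic for any proposition whatsoever. Hence $\neg Q$, and therefore $z = 0$ by the field property again, which gives $x = y$.

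I do not expect any genuine obstacle: the substantive work — verifying the field property~(\ref{item:field}) — has already been done, and the present statement is essentially a logical corollary of it. The only points demanding a little care are the bookkeeping of the triple negation and the appeal to nontriviality of $A^\sim$, the latter remaining legitimate even in the degenerate case where $A$ is the zero ring, since then $\Spec(A)$ is the empty locale over which every sequent is vacuously forced.
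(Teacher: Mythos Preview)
Your proposal is correct and follows essentially the same approach as the paper: both argue internally, reduce to $z=x-y$, use nontriviality of $A^\sim$ to show that invertibility of $z$ implies $\neg(z=0)$, and then apply the field property to conclude $z=0$ from noninvertibility. The paper phrases the middle step as a direct derivation of a contradiction (assume $x-y$ invertible, get $\neg(x-y=0)$, contradict $\neg\neg(x-y=0)$), whereas you package the same reasoning via the equivalence $z=0 \Leftrightarrow \neg Q$ and the triple-negation law; this is purely a difference in presentation.
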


\begin{proof}We argue internally in the universe of sheaves, using the sheaf
semantics. Let~$x,y \? A^\sim$ be given such
that~$\neg\neg(x = y)$. We will verify that~$x - y$ is not invertible; by the
field condition, this will imply~$x - y = 0$, hence~$x = y$.

So assume that~$x - y$ is invertible. Then~$\neg(x - y = 0)$, since if~$x - y =
0$, then~$1 = (x-y) (x-y)^{-1} = 0$ in contradiction to~$1 \neq 0$. Hence we have a
contradiction to the assumption~$\neg\neg(x-y=0)$.\end{proof}

\paragraph{The Noetherian property.}

Noetherian conditions are notoriously tricky in constructive algebra, not
least because of their overuse in classical commutative algebra which render
attempts to extract constructive content more challenging.

There are several proposals for constructively sensible definition of
Noetherian rings in the literature on constructive algebra, each with unique
advantages and
disadvantages~\cite{richman:noetherian,mines-richman-ruitenburg:constructive-algebra,perdry:noetherian,perdry:lazy,perdry-schuster:noetherian,tennenbaum:hilbert}.
Insightful comments on why this is so can be found in the introduction and more
specifically on page~27 of the textbook by Henri Lombardi and Claude
Quitté~\cite{lombardi-quitte:constructive-algebra}. To this list of conditions,
we add the following.

\begin{definition}
\label{defn:anonymously-noetherian}
A ring is \emph{anonymously Noetherian} if and only if any of its ideals is
\notnot finitely generated.
\end{definition}

\begin{example}\label{ex:z-noeth}The assumption that any ideal of~$\ZZ$ is finitely generated implies
the law of excluded middle. But we can prove, constructively, that~$\ZZ$ is
anonymously Noetherian: Let~$\aaa$ be an ideal of~$\ZZ$ and assume that~$\aaa$
is not finitely generated. Then~$1 \not\in \aaa$, as else~$\aaa = (1)$ would be
finitely generated. Also~$2 \not\in \aaa$, as else (noting
that~$1\not\in\aaa$),~$\aaa = (2)$. Continuing in this manner, we may
conclude~$n \not\in \aaa$ for every number~$n \geq 1$. Hence~$\aaa = (0)$. But
this is a contradiction to the assumption that~$\aaa$ is not finitely
generated.
\end{example}

Definition~\ref{defn:anonymously-noetherian} is at odds with the idea that constructive mathematics should
be informative: It only expresses that it is impossible for no finite
generating family to exist without requiring that such a family should be given
-- it may remain \emph{anonymous}.\footnote{We borrowed the term ``anonymous''
from type theory, where it is used with a similar meaning (see for instance
Ref.~\bracketedrefcite{kraus-escardo-coquand-altenkirch:anonymous}). However, there is a
subtle difference: The conjunction of ``there is at most one element~$x$'' and
``there is anonymously an element~$x$ in the sense of type theory'' implies
that there actually exists an element~$x$. In contrast, \notnot existence does
not.} And while Hilbert's basis theorem, stating that if a ring~$B$ is
Noetherian then so is the polynomial ring~$B[X]$, does have a constructive
proof for the anonymous version of the Noetherian condition,\footnote{For
instance, a careful reading of the textbook proof given
in~\cite[Theorem~7.5]{atiyah-macdonald:commutative-algebra} shows that the
basis theorem holds intuitionistically as stated.} this result seems empty: For
it just transforms one unconcrete promise (generators of each ideal of~$B$
exist somewhere, platonically) into another (generators for ideals of~$B[X]$
exist somewhere, platonically).

Curiously, the anonymous version of the Noetherian condition is still useful in
constructive algebra when interpreted by the sheaf semantics: This is firstly
because it just so happens that~$A^\sim$ is anonymously Noetherian if~$A$ is reduced,
and secondly because the interpretation of double negation with the sheaf
semantics does have nontrivial content (as evidenced by
statement~(\ref{item:negneg-stable})). The Noetherian condition is also a
unique feature of the sheaf model, not shared by the ring~$A$ or its stalks.

\begin{proposition}\label{prop:anon-noeth}
If~$A$ is reduced, then the sheaf of rings~$A^\sim$ is anonymously
Noetherian.\end{proposition}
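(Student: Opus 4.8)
The plan is to argue entirely inside the universe of sheaves on $\Spec(A)$ and to reduce the claim to the two nongeometric properties of $A^\sim$ that are available because $A$ is reduced: that $A^\sim$ is a field in the sense of statement~(\ref{item:field}) (a noninvertible element is zero) and that $A^\sim$ has $\neg\neg$-stable equality, statement~(\ref{item:negneg-stable}). Both hold in $\Sh(\Spec(A))$, the first by the field property and the second by the preceding proposition. Since the argument I have in mind is purely intuitionistic and uses only these two facts, soundness of the sheaf semantics (Theorem~\ref{thm:basic-properties-sheaf-semantics}(3)) will then give $\Spec(A) \models$ ``every ideal is $\neg\neg$ finitely generated'', which is precisely the assertion that $A^\sim$ is anonymously Noetherian.

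Reasoning internally, the steps would be as follows. Let $I$ be an ideal of $A^\sim$; to prove $\neg\neg(I$ is finitely generated$)$, assume $I$ is \emph{not} finitely generated and derive a contradiction. I would first show $I = (0)$: fix $x \in I$, and, invoking $\neg\neg$-stability of equality, reduce the goal $x = 0$ to $\neg\neg(x = 0)$; so assume $\neg(x = 0)$. Instantiated at $x$ and contraposed, the field property~(\ref{item:field}) reads $\neg(x = 0) \Rightarrow \neg\neg(\exists y\?A^\sim\+ xy = 1)$, so we obtain $\neg\neg(\exists y\?A^\sim\+ xy = 1)$. On the other hand, were $x$ invertible, then $1 = x\,x^{-1} \in I$ (as $I$ is an ideal), whence $I = A^\sim$, which is generated by the one-element family $(1)$ and therefore finitely generated, contradicting our standing hypothesis; hence $\neg(\exists y\?A^\sim\+ xy = 1)$. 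The two statements clash, so $\neg\neg(x = 0)$, and thus $x = 0$. This gives $I = (0)$, which is generated by the empty (Kuratowski-finite) family, so $I$ is finitely generated after all -- the desired contradiction. (Compare Exercise~\ref{ex:least-number-principle}(c).)

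The proof is short; essentially all of its substance is outsourced to statements~(\ref{item:field}) and~(\ref{item:negneg-stable}), whose own proofs -- a genuine computation with the explicit description of $A^\sim$ in the first case, a formal consequence of it in the second -- are the real work and are carried out elsewhere. The one point I would be careful about is that $\neg\neg$-stable equality is genuinely required here, not merely the field property: from $\neg(x = 0)$ the field property yields only $\neg\neg(\exists y\?A^\sim\+ xy = 1)$, so along the way one learns only $\forall x\?A^\sim\+ (x \in I \Rightarrow \neg\neg(x = 0))$, and since intuitionistically $\neg\neg$ does not commute with $\forall$, this falls short of $\neg\neg(I = (0))$. Stability of equality is exactly what bridges the gap, promoting each $\neg\neg(x = 0)$ to $x = 0$ and collapsing $I$ to $(0)$ outright.
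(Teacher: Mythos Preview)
Your proof is correct and follows the same overall strategy as the paper: argue internally, assume the ideal is not finitely generated, and show that every element of it is zero. However, your route is unnecessarily circuitous, and your closing claim that $\neg\neg$-stable equality is ``genuinely required'' is mistaken.

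The field property~(\ref{item:field}) reads, literally, $\neg(\exists y\?A^\sim\+ xy = 1) \Rightarrow x = 0$. In your step you already derive $\neg(\exists y\?A^\sim\+ xy = 1)$ directly: if $x$ were invertible then $I = (1)$ would be finitely generated, contradicting the standing hypothesis. At that moment the field property hands you $x = 0$ outright---no contraposition, no appeal to~(\ref{item:negneg-stable}). This is exactly how the paper proceeds: let $x \in \aaa$; if $x$ is invertible then $\aaa = (1)$ is finitely generated, so $x$ is not invertible; hence $x = 0$ by~(\ref{item:field}); thus $\aaa = (0)$, contradiction. Your detour through $\neg(x=0) \Rightarrow \neg\neg(\text{invertible})$ and then back via stability is logically valid but redundant: you contraposed an implication only to re-derive its original hypothesis and then undo the contraposition with~(\ref{item:negneg-stable}). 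The field condition is already oriented the right way.
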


\begin{proof}We argue internally, under the sheaf semantics. Let~$\aaa \subseteq A^\sim$ be an ideal.
Assume that~$\aaa$ is not finitely generated. We will verify that then~$\aaa =
(0)$, hence that~$\aaa$ is finitely generated; contradiction.

Let~$x \in \aaa$. If~$x$ is invertible, then~$\aaa = (1)$ is finitely
generated. Hence~$x$ is not invertible. Thus~$x = 0$ because~$A^\sim$ is a
field.
\end{proof}

\begin{remark}The condition for a ring to be (anonymously) Noetherian is a
higher-order condition and hence out of scope of the first-order version of the
sheaf semantics presented in Section~\ref{sect:sheaf-semantics}. However, the
general version of the sheaf semantics is sound with respect to intuitionistic
higher-order logic, hence the proof of Proposition~\ref{prop:anon-noeth} is
valid.
\end{remark}

\subsection{An algebraic origin story}
\label{sect:origins}

The purpose of this section is to properly motivate the construction
leading to~$A^\sim$. Briefly, the story is as follows, with details given
below.
\begin{enumerate}
\item Given a ring~$A$, we set out to construct the \emph{free local ring}
over~$A$.
\item In the strict sense of the word, this endeavor will fail, but in a wider
sense the quest will succeed and yield the sheaf~$A^\sim$.
\item It is then a surprising observation that~$A^\sim$ is a field, even though
we only set out to construct a local ring.
\item The field property of~$A^\sim$ should be appropriately appreciated: While
there is a general machinery for free constructions of this generalized kind, this
machinery is \emph{not} applicable for constructing free fields.
\end{enumerate}

\paragraph{Free constructions.}
Let~$G$ be a group. Then there is a universal way of turning~$G$ into an abelian
group~$G^\ab$ equipped with a group homomorphism~$G \to G^\ab$, the \emph{free
abelian group over~$G$}. This abelianization has the universal property that
for any homomorphism~$G \to M$ into an abelian group, there is exactly one
homomorphism~$G^\ab \to M$ rendering the diagram
\[ \xymatrix{
  G \ar[rd] \ar[rrr] &&& {\substack{\text{abelian}\\\text{\normalsize$\!M$}\\\phantom{\text{abelian}}}} \\
  & {\substack{\text{\normalsize$G^\ab$}\\\text{abelian}}} \ar@{-->}_[@!29]{}[rru]
} \]
commutative. In textbooks, the abelianization is usually constructed as~$G^\ab
= G/[G,G]$, that is as the quotient by the subgroup generated by the
commutators. This explicit construction makes it appear that abelianization
is a specific result in algebra pertaining to groups.

We can also cast the construction in logical language. This change in perspective makes it evident
that abelianization is just a special case of a more general procedure.
The free abelian group over~$G$ can be obtained as the \emph{term model} of the
theory of abelian groups extended by a constant~$x_g$ for each element~$g \in
G$ and the axioms
\begin{align*}
  \top &\vdash x_{g \circ h} = x_g \circ x_h & \text{(for each~$g,h \in G$)} \\
  \top &\vdash x_{g^{-1}} = x_g^{-1} & \text{(for each~$g \in G$)} \\
  \top &\vdash x_e = e
\end{align*}
That is, $G^\ab$ contains exactly those elements and exactly those
identifications between elements such that~$G^\ab$ is an abelian group and such
that~$G$ can be interpreted in it (that is, that there is a homomorphism~$G \to
G^\ab$), and nothing more.

In exactly the same manner, we can construct the free group over a monoid, the
free~$\ZZ/(2)$-algebra over a ring, the free ring over a set or the free
Heyting algebra over a partially ordered set. This is all well-understood; a
particularly perspicuous account in the general setting of partial Horn
theories is due to Erik Palmgren and Steve Vickers~\cite[Section~5]{palmgren-vickers:partial-horn}.

However, this procedure cannot be carried out for any pair of theories. For
instance, there are no \emph{free fields}. The free field on a set~$M$ would be
a field~$K$ together with a map~$M \to K$ such that any map~$M \to L$ into a field~$L$ factors
over the given map~$M \to K$ by a unique field homomorphism~$K \to L$.

For instance, in the case~$M = \{ t \}$, the field~$\QQ(t)$ of rational
functions comes close, but fails because there are no field homomorphisms~$\QQ(t)
\to L$ which could map~$t$ to an algebraic element. We could also try to
concoct a term model, but the set of terms in the language of a field extended
by a constant~$t$ modulo provable equivalence is only the ring~$\ZZ[t]$, not a
field.

The non-existence of free fields is put into perspective by the observation that the
theory of fields is, for any of its variants, not a partial Horn theory.

\paragraph{Free local rings.} The theory of local rings, that is the theory of
rings extended by the two coherent axioms
\begin{align*}
  1 = 0 &\vdash \bot \\
  (\exists z\?R\+ (x+y)z = 1) &\seq{x:R,y:R} (\exists z\?R\+ xz = 1) \vee
  (\exists z\?R\+ yz = 1),
\end{align*}
is also not a partial Horn theory. Hence there is no reason to expect that free
local rings exist, and indeed, in general they do not. However, despite this
negative outlook, we want to analyze the situation in more detail.

The correct notion of a morphism between local rings is that of a \emph{local
ring homomorphism}, a ring homomorphism~$f : R \to S$ which \emph{reflects
invertibility}, that is for which~$f(x)$ being invertible implies that~$x$ is
invertible.

Hence a \emph{free local ring} over a ring~$A$ is to be a local ring~$A'$
together with a ring homomorphism~$A \to A'$ such that the following universal
property holds: Any ring homomorphism~$A \to B$ into a local ring uniquely
factors via a local ring homomorphism over~$A \to A'$ as indicated in the diagram.
\[ \xymatrix{
  A \ar[rd] \ar[rrr] &&& {\substack{\text{local}\\\text{\normalsize$\!\!\!B$}\\\phantom{\text{local}}}} \\
  & {\substack{\text{\normalsize$\!\!A'$}\\\text{local}}} \ar@{-->}_[@!31]{\text{local}}[rru]
} \]

To develop intuition for what this universal property requests, let a
homomorphism~$f : A \to B$ into a local ring be given. The subset~$B^\times
\subseteq B$ is a \emph{prime filter}, due to the locality of~$B$. Its
preimage~$\fff \defeq f^{-1}[B^\times]$ is a prime filter of~$A$. Hence the
localization~$A_\fff = A[\fff^{-1}]$ is a local ring and~$f$ factors via the
well-defined map~$f' : A_\fff \to B,\,\frac{x}{s} \mapsto f(s)^{-1}f(x)$ over
the localization morphism~$A \to A_\fff,\,x \mapsto \frac{x}{1}$. Furthermore,
this map is local.

Hence the stalks~$A_\fff$, where~$\fff$ ranges over the prime filters of~$A$,
can be regarded as approximations to the hypothetical true local ring over~$A$.
Each of these stalks is local, but might not validate the universal property
for all homomorphisms~$A \to B$ into local rings. We are thus led to the
following conclusions.

\begin{enumerate}
\item A free local ring~$A \to A'$ over~$A$ exists if~$A$ has exactly one prime
filter. (The converse also holds.) In classical mathematics, this condition is
satisfied if and only if~$A$ is local ring of Krull dimension zero. In this
case, the unique prime filter is~$A^\times$ and the free local ring over~$A$ is given
by~$A$ itself. Hence in general, the problem of constructing the free local
ring over a given ring is not solvable.
\item However, we could solve the problem in full generality if we would have a
special prime filter~$\fff_0$ which could somehow shift shape, that is, turn into any
specific prime filter~$\fff$ on demand.
\end{enumerate}

The existence of such a shape-shifting prime filter~$\fff_0$ is not a pipe dream.
While it does not exist as a set, it can be realized as a \emph{sheaf}.

\begin{definition}\label{defn:generic-prime-filter}
The \emph{generic prime filter}~$\fff_0$ of a ring~$A$ is the
generic model of the theory of prime filters of~$A$.\end{definition}

According to this definition, the generic prime filter is a particular
\emph{sheaf model} over the classifying locale of prime filters of~$A$, the
\emph{spectrum}~$\Spec(A)$ of~$A$. By localizing at the generic prime filter, we solve
the problem of constructing the free local ring in a generalized sense: \emph{The
free local ring over~$A$ exists as a certain sheaf of rings.} It has a
universal property not only with respect to all ordinary rings, but also with respect to all
sheaves of rings on arbitrary locales.

\paragraph{Constructing the sheaf~$A^\sim$.} We recall from Section~\ref{sect:sheaves} that the given ring~$A$
has a simple counterpart in the universe of sheaves over~$\Spec(A)$, namely the
constant sheaf~$\underline{A}$. We construct the sheaf~$A^\sim$ as
\[ A^\sim \defeq \underline{A}[\fff_0^{-1}], \]
that is as the localization of~$\underline{A}$ at its prime
filter~$\fff_0$.\footnote{This description contains a slight abuse of notation.
According to Definition~\ref{defn:generic-prime-filter}, the generic prime
filter $\fff_0$ is a model of the theory of prime filters, hence a
family~$(U_x)_{x \in A}$ of opens of~$\Spec(A)$, one for each nullary relation
symbol of the theory. In fact, by the construction of the generic model, the
open~$U_x$ is simply the element~$[D(x)]$ of the Lindenbaum algebra. We
turn this family into the subsheaf of~$\underline{A}$ given by $U \mapsto \{ g
\in \Hom(U,A) \,|\, \forall x \in A\+ g^{-1}[\{a\}] \preceq [D(a)] \}$. Abusing
notation, we denote this subsheaf also as~``$\fff_0$''. The
statement~``$\fff_0$ is a prime filter'' is then validated by the sheaf semantics.}

With this definition, a number of statements from
Section~\ref{sect:a-remarkable-sheaf} are immediate.

\begin{proposition}\label{prop:immediate-consequences}
Let~$A$ be a ring. Then the statements~(\ref{item:loc-stable}), (\ref{item:stalks}) and
(\ref{item:local}) displayed above hold:
\begin{Alphlist}[(D)]
\item[(\ref{item:loc-stable})]
The ring~$A^\sim$ has every property~$\varphi$ that~$A$ has, assuming
that~$\varphi$ is a property of rings which can be put in the formal
higher-order language of toposes; that there is an intuitionistic proof
that~$\varphi$ is localization-stable; and that every element of~$A$ is
nilpotent or not.
\item[(\ref{item:stalks})]
If a geometric sequent holds for~$A^\sim$, then it also holds for all
stalks~$A_\fff$ at prime filters. The converse holds if~$\Spec(A)$ is spatial,
for instance if~\BPIT is available.
\item[(\ref{item:local})] The ring~$A^\sim$ is local.
\end{Alphlist}
\end{proposition}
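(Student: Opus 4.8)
The plan is to deduce all three claims by transporting intuitionistically provable statements across the sheaf semantics with Theorem~\ref{thm:basic-properties-sheaf-semantics}(3), exploiting the description $A^\sim = \underline{A}[\fff_0^{-1}]$ together with two structural facts. First, the constant-sheaf functor $\underline{(\cdot)}\colon\Set\to\Sh(\Spec(A))$ is the inverse image part of the geometric morphism to $\Set$, hence preserves finite limits and all colimits and in particular commutes with forming localizations of rings (a localization being a filtered colimit). Second, the pullback of the generic prime filter $\fff_0$ along the point $\fff\colon\pt\to\Spec(A)$ determined by an honest prime filter $\fff\subseteq A$ is $\fff$ itself, so that under the canonical isomorphism $\fff^{*}\underline{A}\cong A$ we get $(A^\sim)_\fff=\fff^{*}(\underline{A}[\fff_0^{-1}])\cong(\fff^{*}\underline{A})[(\fff^{*}\fff_0)^{-1}]=A[\fff^{-1}]=A_\fff$. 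Throughout I will use that $\fff_0$, being the generic model of the theory of prime filters of $A$, validates the prime-filter axioms, hence is internally a prime filter of the ring $\underline{A}$.

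For~(\ref{item:local}) I would first record the purely intuitionistic fact that the localization $R[S^{-1}]$ of a ring $R$ at a prime filter $S\subseteq R$ is local: one has $1\neq 0$ in $R[S^{-1}]$ because $0\notin S$, and if $\frac{a}{s}+\frac{b}{t}$ is a unit, so that $v(at+bs)c\in S$ for suitable $v,c$, then the prime-filter axioms force $a\in S$ or $b\in S$, whence $\frac{a}{s}$ or $\frac{b}{t}$ is a unit. Applying this theorem internally to $R=\underline{A}$ and $S=\fff_0$ and invoking soundness of the sheaf semantics (Theorem~\ref{thm:basic-properties-sheaf-semantics}(3)) exhibits $A^\sim$ as a local ring over $\Spec(A)$.

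For~(\ref{item:stalks}) the stalk computation of the first paragraph identifies the stalks of $A^\sim$, regarded as a sheaf structure for the signature of rings (which carries no axioms), at the points of $\Spec(A)$ with the rings $A_\fff$. The ``only if'' direction is then exactly Proposition~\ref{prop:at-points}(1), and the ``if'' direction is Proposition~\ref{prop:at-points}(2), whose hypothesis that $\Spec(A)$ is spatial is met once \BPIT is assumed, since the theory of prime filters of $A$ is propositional and coherent and \BPIT implies that the classifying locale of every such theory is spatial.

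The delicate part is~(\ref{item:loc-stable}). Here the strategy is once more to transfer across the internal localization $A^\sim=\underline{A}[\fff_0^{-1}]$: given an intuitionistic proof, formulated in the higher-order language of toposes, that $\varphi$ is inherited by localizations of rings, soundness reduces the goal to $\Spec(A)\models\varphi(\underline{A})$, i.e.\ to showing that the constant sheaf itself inherits $\varphi$. This is the step in which the hypothesis that every element of $A$ is nilpotent or not is used: it decides, for each $f\in A$, whether the basic open $[D(f)]$ equals $\bot$, and this is what permits the (possibly deeply nested, higher-order) unfolding of $\varphi(\underline{A})$ to be carried out along the cover $([D(f)])_{f\in A}$ of $\Spec(A)$, each local instance reducing to $\varphi$ holding in the localization $A[f^{-1}]$, which it does by localization-stability. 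I expect this last verification --- reconciling the sheaf-semantic clauses for $\forall$, for $\Rightarrow$ and for the power-object quantifiers with the fact that the constant-sheaf functor does not preserve power objects --- to be the main obstacle; by contrast~(\ref{item:local}) and~(\ref{item:stalks}) are routine once the stalk computation is in hand.
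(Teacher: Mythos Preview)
Your arguments for~(\ref{item:local}) and~(\ref{item:stalks}) are essentially those of the paper: $A^\sim$ is local because it is, internally, the localization of a ring at a prime filter, and the stalk computation $(A^\sim)_\fff \cong A_\fff$ together with Proposition~\ref{prop:at-points} settles~(\ref{item:stalks}).

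For~(\ref{item:loc-stable}), your reduction of the goal to $\Spec(A) \models \varphi(\underline{A})$ is correct and matches the paper. But your proposed verification of this step has a genuine gap. You claim that on each basic open $[D(f)]$ the statement $\varphi(\underline{A})$ ``reduces to $\varphi$ holding in the localization $A[f^{-1}]$''. This is not so: the restriction of the \emph{constant} sheaf $\underline{A}$ to the open sublocale $[D(f)]$ is again the constant sheaf with value $A$ on that sublocale, not $A[f^{-1}]$. It is $A^\sim$, not $\underline{A}$, whose sections over $[D(f)]$ form $A[f^{-1}]$ (Proposition~\ref{prop:identification}). Hence the appeal to localization-stability at this point is misplaced: on $[D(f)]$ you are facing exactly the same problem---does the constant sheaf on a locale inherit $\varphi$ from the set $A$?---and not an instance of $\varphi$ for an honest ring. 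Covering by the $[D(f)]$ therefore gains nothing.

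The paper's route is different in kind. The hypothesis that every element of $A$ is nilpotent or not is used to show that $\Spec(A)$ is \emph{overt}: deciding whether each $[D(f)]$ equals $\bot$ furnishes a positivity predicate on basic opens, hence on all opens. For overt locales the constant-sheaf functor is the inverse image of an \emph{open} geometric morphism to $\Set$, and such inverse images preserve the validity of (infinitary) first-order formulas; this is the ``standard fact'' the paper invokes to pass from $\varphi(A)$ in $\Set$ to $\varphi(\underline{A})$ internally. Only after that does one apply localization-stability, internally, to go from $\underline{A}$ to $A^\sim = \underline{A}[\fff_0^{-1}]$. In short, you are invoking localization-stability once too often and at the wrong level; the missing ingredient is overtness.
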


\begin{proof}
\begin{Alphlist}[(D)]
\item[(\ref{item:local})] The ring~$A^\sim$ is local as it is the
localization at a prime filter.
\item[(\ref{item:loc-stable})] The assumption on~$A$ implies that~$\Spec(A)$ is
\emph{overt}. For overt locales, it is a standard fact that any (infinitary)
first-order property of sets passes to the induced constant sheaves. Hence the
claim follows by observing that~$A^\sim$ is a localization of~$\underline{A}$.
\item[(\ref{item:stalks})] We recall that the points of~$\Spec(A)$ are
precisely the prime filters of~$A$. The sheaf-theoretic stalk of the constant
sheaf~$\underline{A}$ at any point is precisely~$A$, and by construction, the
stalk of the generic prime filter~$\fff_0$ (considered as a subsheaf
of~$\underline{A}$) at a prime filter~$\fff$ is~$\fff$. Since localization is a
geometric construction, it is preserved by the operation of computing
(sheaf-theoretic) stalks. Hence the stalk of~$A^\sim$ at a point~$\fff$ is
$A[\fff^{-1}]$, that is, the ring-theoretic stalk of~$A$ at~$\fff$. The claim
therefore follows from Proposition~\ref{prop:at-points}.\qedhere
\end{Alphlist}
\end{proof}

Furthermore, the sheaf~$A^\sim$ is the free local ring over~$A$ in the
following sense. If~$R$ is a sheaf of rings on a locale~$X$ and~$S$ is a sheaf
of rings on a locale~$Y$, then by a morphism~$R \to S$ we mean a pair
consisting of a morphism~$f : Y \to X$ of locales together with a
morphism~$f^\sharp : f^{-1}R \to S$ of sheaves of rings on~$Y$. (The
sheaf~$f^{-1}R$ is the \emph{pullback sheaf} of~$R$ along~$f$.)
In this way a category of sheaves of rings on arbitrary locales is set up, into
which the category of ordinary rings embeds.

For~$A^\sim$ to be the local ring over~$A$, there has to be a morphism~$A \to
A^\sim$. For this we take the pair~$\langle g, g^\sharp \rangle$ where~$g :
\Spec(A) \to \pt$ is the unique morphism into the one-point locale
and~$g^\sharp : g^{-1}A = \underline{A} \to A^\sim$ is the localization
morphism.

If~$\langle f,f^\sharp \rangle : A \to B$ is a morphism into a local sheaf of
rings on an arbitrary locale~$X$, we can construct, in the universe of sheaves
over~$X$, the factorization~$\underline{A} \to
\underline{A}[\fff^{-1}] \to B$ where~$\fff \defeq
(f^\sharp)^{-1}[B^\times]$. Since~$\fff$ is a model of the theory of prime
filters of~$A$, there is a unique morphism~$g : X \to \Spec(A)$ of locales such
that~$g^{-1}\fff_0 = \fff$. Since~$g^{-1}A^\sim = \underline{A}[\fff^{-1}]$,
the localization just constructed gives rise to a morphism~$A^\sim \to B$ of
local sheaves of rings.

\begin{remark}There is general topos-theoretic machinery for generalized free
constructions of this kind~\cite{coste:sheaf-representation,cole:spectra}, and
this machinery can be used to construct several kinds of spectra, including the
ordinary spectrum of a ring. However, this machinery does have limitations, and
in particular it cannot be used to construct free fields.
\end{remark}

\begin{remark}Given a ring~$A$, there is the geometric theory of \emph{local
localizations} of~$A$. This is the theory of local rings extended by constants~$e_a$
for each element~$a \in A$ and the following axioms:
\begin{align*}
  \top &\vdash e_0 = 0 \\
  \top &\vdash e_1 = 1 \\
  \top &\vdash e_{a + b} = e_a + e_b & \text{(for each~$a,b \in A$)} \\
  \top &\vdash e_{ab} = e_a e_b & \text{(for each~$a,b \in A$)} \\
  \top &\vdash e_{-a} = -e_a & \text{(for each~$a \in A$)} \\
  \top &\vdash_{x:R} \bigvee_{a \in A} \bigvee_{s \in A} \bigl((\exists z\?R\+
  e_s z = 1) \wedge e_s x = e_a\bigr) \\
  e_a = 0 &\vdash \bigvee_{\substack{s \in A\\sa = 0}} (\exists z\?R\+ e_s z = 1) & \text{(for each~$a \in A$)}
\end{align*}
By the first five listed entries, a model~$R$ of this theory comes equipped
with a ring homomorphism~$f : A \to R$. The remaining two entries ensure that~$R$ is
a localization of~$A$, more precisely that it is the localization of~$A$
at~$f^{-1}R^\times$.

Since this theory is not propositional, Section~\ref{sect:presenting-frames}
does not apply. There is, however, still a classifying \emph{topos} of this theory, and it turns
out that it coincides with the topos of sheaves over~$\Spec(A)$ -- mainly
because the theory of local localizations of~$A$ is equivalent to the theory of
prime filters of~$A$. Under this identification, the generic model of the local
localization is precisely the sheaf~$A^\sim$, and it validates first-order
sequents not expected from a local localization; as repeatedly stressed
(statement~(\ref{item:field})), it is a field, even though the theory of local
localizations does not prove the field condition.
\end{remark}

\subsection{Understanding the sheaf model}
\label{sect:understanding}

In Section~\ref{sect:origins}, we constructed the sheaf~$A^\sim$ as the free
local ring over~$A$, more specifically as the localization of the constant
sheaf~$\underline{A}$ on~$\Spec(A)$ at the generic prime filter~$\fff_0$
of~$A$. Several of the properties mentioned in
Section~\ref{sect:a-remarkable-sheaf} are immediate consequences of this
abstract description.

However, for a more detailed identification of~$A^\sim$ -- especially for
verifying its field property -- we require a more concrete description. In
particular, we require a concrete description of the underlying frame
of~$\Spec(A)$. Since this frame is defined as the Lindenbaum algebra of the
theory of prime filters of~$A$, we need a thorough grasp of the set of sequents proved by
this theory.

Such an understanding is imparted by the following theorem. It is the
central workhorse of our approach.

\begin{theorem}\label{thm:workhorse}
Let~$A$ be a ring. The Lindenbaum algebra of the theory~$\TT$ of prime filters
of~$A$ is canonically isomorphic to the frame of radical ideals in~$A$, via an
isomorphism which maps
\[ [D(f)] \longmapsto \sqrt{(f)}. \]
In particular:
\[
  \textnormal{$\TT$ proves~$D(f) \vdash \bigvee_{i \in I} D(g_i)$} \quad\text{iff}\quad
    f \in \sqrt{(g_i)_i}. \]
\end{theorem}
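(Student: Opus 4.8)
The plan is to realize the claimed map as the frame homomorphism induced by a particular model of~$\TT$, and then to show it is an isomorphism via the criterion of Exercise~\ref{ex:isomorphisms-of-frames}. \emph{Step~1: a $\TT$-model in $\Rad(A)$.} By Exercise~\ref{ex:frame-of-radical-ideals}(b), $\Rad(A)$ is a frame. I would check that interpreting each relation symbol~$D(x)$ as~$\sqrt{(x)}$ makes~$\Rad(A)$ into a model of~$\TT$; this amounts to five elementary facts: $\sqrt{(0)}$ is the bottom element, $\sqrt{(x+y)} \subseteq \sqrt{(x)} \vee \sqrt{(y)} = \sqrt{(x,y)}$ (as~$x+y \in (x,y)$), $\sqrt{(xy)} \subseteq \sqrt{(x)}$, $\sqrt{(1)} = A$ is the top, and $\sqrt{(x)} \cap \sqrt{(y)} \subseteq \sqrt{(xy)}$ (if~$a^n \in (x)$ and~$a^m \in (y)$, then~$a^{n+m} \in (xy)$). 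Since a model of a propositional geometric theory in a frame is the same as a frame homomorphism out of the Lindenbaum algebra~$\OOO(L(\TT))$, this model yields a frame homomorphism $\psi \colon \OOO(L(\TT)) \to \Rad(A)$ with $\psi([D(f)]) = \sqrt{(f)}$ (well-defined on equivalence classes by soundness).

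\emph{Step~2: surjectivity of $\psi$, and a normal form for formulas.} For a radical ideal~$\aaa$ we have $\aaa = \sqrt{\aaa} = \bigvee_{f \in \aaa} \sqrt{(f)} = \psi\bigl(\bigvee_{f \in \aaa} [D(f)]\bigr)$, the join taken in~$\Rad(A)$; so~$\psi$ is surjective. Next, using distributivity of~$\wedge$ over~$\bigvee$ and the fact that~$\TT$ is propositional (no~$\exists$), every geometric formula over the signature of~$\TT$ is provably equivalent to a set-indexed disjunction of finite conjunctions of the~$D(x)$; and since the axioms give~$D(x) \wedge D(y) \dashv\vdash D(xy)$ and~$\top \dashv\vdash D(1)$, every such finite conjunction is equivalent to a single~$D$. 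Hence every element of~$\OOO(L(\TT))$ has the form~$\bigvee_i [D(f_i)]$.

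\emph{Step~3: $\psi$ reflects the ordering.} By Step~2 it suffices to prove the ``if'' half of the ``in particular'' claim: if~$f \in \sqrt{(g_i)_{i \in I}}$, then~$\TT$ proves~$D(f) \vdash \bigvee_{i \in I} D(g_i)$. Indeed, given~$U = \bigvee_i [D(f_i)]$ and~$V = \bigvee_j [D(g_j)]$ with~$\psi(U) \subseteq \psi(V)$, each~$f_i$ lies in~$\sqrt{(g_j)_j}$, so~$[D(f_i)] \preceq V$ and hence~$U \preceq V$. To prove the implication, write~$f^n = \sum_{j \in J'} a_j g_j$ for a finite~$J' \subseteq I$, coefficients~$a_j \in A$ and~$n \geq 1$. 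The axioms~$D(xy) \vdash D(x)$, $D(xy) \vdash D(y)$, $D(x) \wedge D(y) \vdash D(xy)$ give~$D(f) \dashv\vdash D(f^2) \dashv\vdash \cdots \dashv\vdash D(f^n)$, so~$D(f) \vdash D\bigl(\sum_{j \in J'} a_j g_j\bigr)$; iterating~$D(x+y) \vdash D(x) \vee D(y)$ over the finite sum (the empty sum contributing~$D(0) \vdash \bot$) gives~$D\bigl(\sum_{j \in J'} a_j g_j\bigr) \vdash \bigvee_{j \in J'} D(a_j g_j)$; and~$D(a_j g_j) \vdash D(g_j)$ by~$D(xy) \vdash D(y)$. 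Chaining, $D(f) \vdash \bigvee_{j \in J'} D(g_j) \vdash \bigvee_{i \in I} D(g_i)$. By Exercise~\ref{ex:isomorphisms-of-frames}, $\psi$ is a frame isomorphism, giving the first assertion. The ``only if'' half of the ``in particular'' claim then follows by evaluating any derivation of~$D(f) \vdash \bigvee_i D(g_i)$ in the model of Step~1: it forces~$\sqrt{(f)} \subseteq \sqrt{(g_i)_i}$, i.e.~$f \in \sqrt{(g_i)_i}$.

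The step I expect to be the main obstacle is Step~3, and within it the scrupulous handling of the finitary syntax: verifying that~$D(f) \dashv\vdash D(f^n)$ really follows from the stated axioms, iterating the additive axiom over an arbitrary finite index set (taking care of the empty and singleton cases), and checking that the normal-form reduction in Step~2 does not covertly rely on the axiom of choice — it does not, since one manipulates formulas directly rather than chosen representatives of equivalence classes. The remaining verifications are routine bookkeeping.
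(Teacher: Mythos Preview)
Your proof is correct and follows essentially the same approach as the paper: construct the frame homomorphism $\psi$ by checking the five axioms on the generators~$D(x)\mapsto\sqrt{(x)}$, verify surjectivity via~$\aaa=\psi\bigl(\bigvee_{f\in\aaa}[D(f)]\bigr)$, and establish order-reflection by the chain of entailments $D(f)\vdash D(f^n)\vdash\bigvee D(a_j g_j)\vdash\bigvee D(g_j)$, then invoke Exercise~\ref{ex:isomorphisms-of-frames}. Your Step~2 normal-form argument makes explicit why it suffices to check order-reflection on generators, a point the paper leaves implicit; otherwise the two arguments are the same.
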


\begin{proof}The set of radical ideals~$\Rad(A)$ is ordered by inclusion. Its
least element is~$\sqrt{(0)}$, the meet~$\aaa \wedge \bbb$ of two radical
ideals is their intersection and the join~$\bigvee_i \aaa_i$ of radical ideals
is~$\sqrt{\sum_i \aaa_i}$. In this way the radical ideals form a frame.

The Lindenbaum algebra of~$\TT$ is the free frame generated by the symbols~$D(x)$,
where~$x$ ranges over the elements of~$A$, modulo the axioms for a prime
filter. Hence to give a frame homomorphism~$\OOO(\Spec(A)) \to \Rad(A)$, we need
to specify images in~$\Rad(A)$ for each generator in such a way that the
axioms for a prime filter are satisfied. This is achieved by declaring~$D(f)
\mapsto \sqrt{(f)}$:
\begin{align*}
  D(0) &\vdash \bot & \sqrt{(0)} &\subseteq \sqrt{(0)} \\
  D(f+g) &\vdash D(f) \vee D(g) & \sqrt{(f+g)} &\subseteq\textstyle \sqrt{\sqrt{(f)} + \sqrt{(g)}} \\
  \top &\vdash D(1) & \sqrt{(1)} &\subseteq \sqrt{(1)} \\
  D(f) \wedge D(g) \dashv\!&\vdash D(fg) & \sqrt{(f)} \cap \sqrt{(g)} &= \sqrt{(fg)}
\end{align*}
The most interesting of these radical inclusions is probably~$\sqrt{(f)} \cap
\sqrt{(g)} \subseteq \sqrt{(fg)}$, as this inclusion illustrates why passing to the
radical is vital; in general, we do not have~$(f) \cap (g) \subseteq (fg)$.

The frame homomorphism obtained in this manner is surjective, since a preimage
for a radical ideal~$\aaa$ is given by~$\bigvee_{f \in \aaa} [D(f)]$. By
Exercise~\ref{ex:isomorphisms-of-frames}, it
remains to show that the homomorphism reflects the ordering.

To this end, let an element~$f \in A$ and a family~$(g_i)_{i \in I}$ of elements be
given and assume~$\sqrt{(f)} \subseteq \sqrt{\sum_i \sqrt{(g_i)}}$. Then there
are values~$n,m \in \NN$, $i_1,\ldots,i_m \in I$ and~$u_1,\ldots,u_m \in
A$ such that~$f^n = u_1 g_{i_1} + \cdots + u_m g_{i_m}$. We then have the
following chain of entailments.
\[
  D(f) \vdash
  D(f^n) \vdash
  \bigvee_{k=1}^m D(u_k g_{i_k}) \vdash
  \bigvee_{k=1}^m D(g_{i_k}) \vdash
  \bigvee_i D(g_i). \qedhere
\]
\end{proof}

Theorem~\ref{thm:workhorse} shows that any derivation in the theory of prime
filters of~$A$ can be put into a \emph{normal form}. An expression of the
form~``$f^n = u_1 g_{i_1} + \cdots + u_m g_{i_m}$'' can be regarded as an
\emph{algebraic certificate} of the entailment~$D(f) \vdash \bigvee_i D(g_i)$.

\begin{remark}An immediate corollary of Theorem~\ref{thm:workhorse} is
that~$\Spec(A)$ is compact, since if~$f \in \sqrt{(g_i)_i}$ then also~$f \in
\sqrt{(g_{i_1},\ldots,g_{i_m})}$ for suitable indices~$i_1,\ldots,i_m$.
However, the stronger result that~$\Spec(A)$ is
even a \emph{coherent locale} can also be deduced from a general theorem
since the theory of prime filters is a coherent theory
(Exercise~\ref{ex:coherent}).\end{remark}

\begin{remark}\label{rem:consistency}
A further corollary of Theorem~\ref{thm:workhorse} is that the theory of prime
filters of~$A$ is inconsistent (that it, that it proves the sequent~$\top
\vdash \bot$, which can also be written as~$D(1) \vdash D(0)$) if and only
if~$A$ is the trivial ring. Contrapositively, if~$A$ is not the trivial ring,
then the theory is consistent. This observation can be regarded as a finitary
substitute of the statement (equivalent to~\BPIT in general) ``nontrivial rings contain prime filters''.
\end{remark}

\begin{remark}Given the importance of Theorem~\ref{thm:workhorse}, it is
natural to wonder where the proof stems from. Our proof proceeded in an ad hoc
fashion, verifying that the frame of radical ideals has the required properties
-- but where does the idea to use radical ideals come from? In general, determining the set
of provable sequents of a geometric theory is a challenging problem. Our most
efficient tool is probably the theory of \emph{entailment relations}. We refer
to Refs.~\bracketedrefcite{rinaldi-schuster-wessel:edde,cederquist-coquand:entrel} and
the references therein and also strongly recommend the literature on dynamical
methods in commutative
algebra~\bracketedrefcite{coste-lombardi-roy:dynamical-method,coquand-lombardi:logical-approach,coquand:site}.
\end{remark}

Having identified the underlying frame of~$\Spec(A)$, the second workhorse is
an explicit description of the sheaf~$A^\sim$.

\begin{proposition}\label{prop:identification}
Let~$A$ be a ring. For any~$f \in A$, there is a canonical isomorphism
\[ A[f^{-1}] \longrightarrow A^\sim([D(f)]). \]
In particular, for~$f = 1$, we obtain statement~(\ref{item:iso}) from
Section~\ref{sect:a-remarkable-sheaf}.
\end{proposition}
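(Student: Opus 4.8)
The plan is to construct a ring homomorphism $A[f^{-1}] \to A^\sim([D(f)])$ and then verify directly that it is bijective, thereby re-deriving over our localic base the classical fact that the structure sheaf of an affine scheme has $A[f^{-1}]$ as its sections over the basic open $D(f)$. Recall that by construction $A^\sim = \underline{A}[\fff_0^{-1}]$ is the sheafification of the presheaf $U \mapsto \underline{A}(U)[\fff_0(U)^{-1}]$, that by Theorem~\ref{thm:workhorse} the open $[D(f)]$ is the radical ideal $\sqrt{(f)}$, and that the footnote description of $\fff_0$ shows that a constant section with value $c \in A$ over a basic open $[D(g)]$ lies in $\fff_0([D(g)])$ exactly when $\sqrt{(g)} \subseteq \sqrt{(c)}$, i.e.\ $g \in \sqrt{(c)}$. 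In particular the constant section $f$ lies in $\fff_0([D(f)])$, so its image under the localization morphism $\underline{A}([D(f)]) \to A^\sim([D(f)])$ is invertible; precomposing with the constants $A \to \underline{A}([D(f)])$ and invoking the universal property of $A[f^{-1}]$ yields the desired map, which for $f = 1$ is precisely the map $A \to A^\sim(X)$ of statement~(\ref{item:iso}).

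For injectivity, suppose a fraction $\frac{a}{f^n}$ maps to $0$ in $A^\sim([D(f)])$. Unwinding the sheafification, there is an open covering $[D(f)] = \bigvee_i [D(fg_i)]$ on which $\frac{a}{1}$ and $\frac{0}{1}$ already agree in the presheaf localization, and after refining the covering we may assume that on $[D(fg_i)]$ this is witnessed by a \emph{constant} section $s_i \in A$ of $\fff_0$ — so $fg_i \in \sqrt{(s_i)}$ — with $s_i a = 0$ in $A$. Writing $(fg_i)^{k_i} = v_i s_i$ gives $(fg_i)^{k_i} a = 0$ for every $i$; since the $[D(fg_i)]$ cover $[D(f)]$ we have $f \in \sqrt{(fg_i)_i}$ by Theorem~\ref{thm:workhorse}, hence $f^N \in \bigl((fg_i)^{k_i}\bigr)_i$ for a suitable $N$, and multiplying an algebraic certificate of this membership by $a$ yields $f^N a = 0$ in $A$, so $\frac{a}{f^n} = 0$ already in $A[f^{-1}]$. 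The identical computation carried out over a basic open $[D(g)]$ in place of $[D(f)]$ shows that $A[g^{-1}] \to A^\sim([D(g)])$ is injective for every $g \in A$; this is all we shall use of the proposition for elements other than $f$, so there is no circularity.

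For surjectivity, a section $\sigma \in A^\sim([D(f)])$ is, by sheafification, locally a fraction: there is a covering $[D(f)] = \bigvee_i [D(fg_i)]$ on which $\sigma$ agrees with the image of some $\frac{a_i}{s_i}$, and after refinement we may take $a_i, s_i \in A$ with $fg_i \in \sqrt{(s_i)}$. Writing $(fg_i)^{k_i} = v_i s_i$ lets us rewrite $\frac{a_i}{s_i} = \frac{a_i v_i}{(fg_i)^{k_i}}$ as an element $\tau_i$ of $A[(fg_i)^{-1}] = A[f^{-1}][g_i^{-1}]$. On $[D(fg_ig_j)]$ the images of $\tau_i$ and $\tau_j$ both equal $\sigma$, so by the injectivity just proved $\tau_i = \tau_j$ in $A[f^{-1}][(g_ig_j)^{-1}]$. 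Since $\Spec(A)$ is compact (the remark after Theorem~\ref{thm:workhorse}) the covering reduces to a finite one, and the classical affine gluing argument — clearing denominators to make the compatibilities $\tau_i = \tau_j$ hold on the nose, then combining with a partition of unity $1 = \sum_l c_l g_{i_l}^{N}$ in $A[f^{-1}]$ — produces a single $a \in A[f^{-1}]$ whose image in each $A[f^{-1}][g_{i_l}^{-1}]$ is $\tau_{i_l}$. Its image in each $A^\sim([D(fg_{i_l})])$ then agrees with $\sigma$ on the covering $[D(f)] = \bigvee_l [D(fg_{i_l})]$, so by the sheaf axiom it equals $\sigma$. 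The one point requiring care throughout is the legitimacy of ``refine the covering until the relevant locally constant sections of $\underline{A}$ and $\fff_0$ become genuinely constant with values in $A$'', which is exactly the defining property of $\underline{A}$ as the sheafification of the constant presheaf; modulo this and the routine tracking of exponents, the whole argument is constructive and uses only Theorem~\ref{thm:workhorse} together with the explicit description of $\fff_0$.
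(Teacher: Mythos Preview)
Your argument is correct and arrives at the result, but the organisation differs from the paper's. The paper does not construct the comparison map and check bijectivity; instead it first \emph{builds} a sheaf on the basis of basic opens by setting $[D(f)] \mapsto A[S_f^{-1}]$, where $S_f = \{g \in A \mid \exists n\, \exists u\, f^n = ug\}$ is the saturation of $\{1,f,f^2,\ldots\}$ (the saturation is used only so that the assignment is literally well-defined on opens rather than on their chosen generators). It then checks that this basis presheaf satisfies the sheaf axiom --- which boils down to exactly the partition-of-unity gluing lemma you invoke at the end of your surjectivity step --- and dismisses the identification of the resulting sheaf with $A^\sim = \underline{A}[\fff_0^{-1}]$ as ``sheaf-theoretic generalities''. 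Your route makes that identification explicit, at the price of unwinding the sheafification; the paper's route is cleaner but leaves the last step to the reader.

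One small constructive wrinkle in your injectivity argument: from the equality of the constant sections $s_i a$ and $0$ in $\underline{A}([D(fg_i)])$ you cannot conclude $s_i a = 0$ in $A$ outright. Equality of constant sections $c$ and $0$ over $[D(h)]$ unwinds (via Exercise~\ref{ex:frame-of-radical-ideals}(\ref{item:join-top})) only to ``$h$ is nilpotent or $c = 0$''. This does no damage: either disjunct yields $\exists k\, (fg_i)^k a = 0$ (in the first case trivially, in the second via your $(fg_i)^{k_i} = v_i s_i$), and the remainder of the argument goes through unchanged. It is nonetheless worth noting that the paper's organisation sidesteps this issue entirely by never computing inside $\underline{A}$.
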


\begin{proof}For~$f \in A$, we set~$S_f \defeq \{ g \in A \,|\, \exists n \in
\NN\+ \exists u \in A\+ f^n = ug \} \subseteq A$. This multiplicative system is
the \emph{saturation} of~$\{ f^0, f^1, \ldots \}$. The localization~$A[S_f^{-1}]$
is canonically isomorphic to~$A[f^{-1}]$, but has the advantage that for
elements~$f,g \in A$ such that~$\sqrt{(f)} = \sqrt{(g)}$, the
rings~$A[S_f^{-1}]$ and~$A[S_g^{-1}]$ are actually \emph{the same}
while~$A[f^{-1}]$ and~$A[g^{-1}]$ are merely canonically isomorphic. This
property eases formalization in set theory; it is otherwise not important.

On the basis of~$\Spec(A)$ given by the opens of the form~$[D(f)]$, we define a partial
presheaf by
\[ [D(f)] \longmapsto A[S_f^{-1}]. \]
This partial presheaf is, on the basis where it is defined, a sheaf. Unraveling
the definitions, this claim boils down to the following basic result in
commutative algebra:\par
\begin{quote}
Let~$B$ be a ring. Let~$1 = f_1 + \cdots + f_m \in B$ be a partition of unity. Let
elements~$s_i \in B[f_i^{-1}]$ be given. Assume that~$s_j = s_k$
in~$B[(f_jf_k)^{-1}]$ for all pairs of indices. Then there is exactly one
element~$s \in B$ such that, for all indices~$i$,~$s = s_i$ in~$B[f_i^{-1}]$.
\end{quote}
The remainder of the argument is by sheaf-theoretic generalities.
\end{proof}

More generally, for any~$A$-module~$M$, we can construct the
localization~$M^\sim \defeq \underline{M}[\fff_0^{-1}]$. In the special case
that~$M$ is~$A$, this definition coincides with our original definition
of~$A^\sim$. The proof of Proposition~\ref{prop:identification} carries over to
show that~$M^\sim([D(f)]) \cong M[f^{-1}]$.

\paragraph{An elementary reformulation.}
As a corollary of Theorem~\ref{thm:workhorse} and
Proposition~\ref{prop:identification}, we can unwind the definitions to recast
the sheaf semantics in entirely explicit algebraic terms, with no locales or
sheaves in sight. Notwithstanding the impredicative nature of locale theory,
the resulting formulation will even make sense in metatheories without a
powerset operation such as~\textsc{czf}~\cite{crosilla:predicativity,aczel-rathjen:cst} or arithmetic
universes~\cite{maietti:au,vickers:sketches}. In
fact, it is then a purely syntactic translation procedure which can be carried
out within~\textsc{pra}.

The approach using locales and sheaves provides two ingredients which go beyond
mere syntax: Firstly, they give motivation for setting up the semantics in
the way we do (Section~\ref{sect:origins}). Secondly, they
conceptualize~$A^\sim$ as a single entity, just as rings like~$A/\mmm$
or~$A_\ppp$ are single entities; nevertheless, after unrolling the definitions,
any statement about~$A^\sim$ is merely a (logically more complex) statement
about~$A$ and its localizations~$A[f^{-1}]$.

By a \emph{formula over a ring element}~$f \in A$, we mean a
first-order formula over the signature which has
\begin{enumerate}
\item a sort~``$A^\sim$'' and function symbols for the structure of a ring,
\item for each~$A$-module~$M$, a sort~``$M^\sim$'' and function symbols for the
structure of an~$A^\sim$-module,
\item for each linear map~$M \to N$, a function symbol~$M^\sim \to N^\sim$,
\item a constant of sort~$A^\sim$ for each element of~$A[f^{-1}]$ and
\item for any~$A$-module~$M$, a constant of sort~$M^\sim$ for each element of~$M[f^{-1}]$.
\end{enumerate}

\begin{corollary}\label{cor:algebraic-reformulation}
Let~$A$ be a ring. Let~$\varphi$ be a formula over~$f \in A$.
Then~$[D(f)] \models \varphi$ in the sense of Table~\ref{table:sheaf-semantics} if and only
if~$f \models \varphi$ in the sense of Table~\ref{table:algebraic-kripke-joyal}.
\end{corollary}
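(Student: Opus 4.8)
The plan is to prove the corollary by induction on the structure of the formula~$\varphi$, translating clause by clause between Table~\ref{table:sheaf-semantics} and Table~\ref{table:algebraic-kripke-joyal}. The whole translation rests on two dictionaries already established: Theorem~\ref{thm:workhorse} turns opens and coverings of~$\Spec(A)$ into radical ideals and algebraic certificates, while Proposition~\ref{prop:identification} (together with its extension~$M^\sim([D(f)]) \cong M[f^{-1}]$) turns sections of~$A^\sim$ and of~$M^\sim$ over a basic open~$[D(f)]$ into elements of the localizations~$A[f^{-1}]$ and~$M[f^{-1}]$.

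Before the induction I would record two auxiliary observations. First, by the surjectivity part of the proof of Theorem~\ref{thm:workhorse}, every open of~$\Spec(A)$ is a join of basic opens~$[D(g)]$, so the~$[D(g)]$ form a basis; moreover~$[D(g)] \preceq [D(f)]$ holds exactly when~$g \in \sqrt{(f)}$, which in turn holds exactly when~$f$ becomes invertible in~$A[g^{-1}]$, and the resulting canonical map~$A[f^{-1}] \to A[g^{-1}]$ is, by Proposition~\ref{prop:identification}, the restriction map~$A^\sim([D(f)]) \to A^\sim([D(g)])$ (and likewise for modules). This is exactly what legitimises the abuse of notation by which a formula ``over~$f$'' is silently re-read ``over~$g$'' upon restriction. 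Second, combining the first observation with the monotonicity and locality of the sheaf semantics (Theorem~\ref{thm:basic-properties-sheaf-semantics}(1)--(2)), every open covering appearing in Table~\ref{table:sheaf-semantics} may be refined to a covering by basic opens without changing any truth value~``$\models$''; and a basic covering~$[D(f)] \preceq \bigvee_i [D(g_i)]$ is the same datum as~$f \in \sqrt{(g_i)_i}$, i.e.\ as a relation~$f^n = u_1 g_{i_1} + \cdots + u_m g_{i_m}$, which after inverting~$f$ is precisely a partition of unity~$1 = \sum_k \frac{u_k}{1}\frac{g_{i_k}}{1}$ in~$A[f^{-1}]$ -- the shape in which coverings enter Table~\ref{table:algebraic-kripke-joyal}. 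Here compactness of~$\Spec(A)$ (the remark after Theorem~\ref{thm:workhorse}) is what lets us pass from the infinite family~$(g_i)_i$ to a finite sub-partition of unity.

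With the dictionaries in hand the induction is routine but long. The base cases ($=_{A^\sim}$, $=_{M^\sim}$, and the application of function symbols for the ring-, module- and linear-map structure) reduce to Proposition~\ref{prop:identification} and the evident fact that the algebraic operations on~$A^\sim$ and~$M^\sim$ restrict on~$[D(f)]$ to the ordinary operations of~$A[f^{-1}]$ and~$M[f^{-1}]$. The clauses for~$\top$, $\wedge$ and~$\bigwedge$ are immediate. For~$\Rightarrow$ and~$\forall$, the universal quantification over all~$V \preceq U$ (resp.\ over all~$V \preceq U$ together with sections~$s_0 \in F(V)$) may, by the first auxiliary observation, be restricted to basic~$V = [D(g)]$ with~$g \in \sqrt{(f)}$, where~$A^\sim([D(g)]) = A[g^{-1}]$ and~$M^\sim([D(g)]) = M[g^{-1}]$; this is exactly how the corresponding clauses of Table~\ref{table:algebraic-kripke-joyal} are phrased, so the inductive step goes through. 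The substantial clauses are~$\bot$, $\vee$, $\bigvee$ and~$\exists$: in each case I unfold ``there is an open covering~$U = \bigvee_i U_i$ with~$\dots$'', refine it to a basic covering by the second auxiliary observation, re-express that covering as a partition of unity, and apply the induction hypothesis on each piece (for~$\exists s\?F\+ \varphi(s)$ additionally noting that a section of~$M^\sim$ over~$[D(g_i)]$ is by Proposition~\ref{prop:identification} just an element of~$M[g_i^{-1}]$, i.e.\ exactly a constant of the algebraic language over~$g_i$); conversely every partition of unity furnishes a basic covering, so the two semantics agree. The case~$\bot$ specialises this to: $[D(f)] \models \bot$ iff~$[D(f)] = \bot$ iff~$\sqrt{(f)} = \sqrt{(0)}$ iff~$f$ is nilpotent iff~$A[f^{-1}]$ is the zero ring.

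The main obstacle is not conceptual but bookkeeping in the covering clauses: one must verify that refining an arbitrary cover to a basic one is harmless (monotonicity and locality of the sheaf semantics, together with compactness of~$\Spec(A)$ to keep the sub-covers finite), and that the particular algebraic certificate~$f^n = \sum_k u_k g_{i_k}$ chosen to witness a covering is irrelevant, since distinct certificates determine the same open covering and the sheaf semantics sees only the covering. One must also thread the re-indexing of constants along the localization maps~$A[f^{-1}] \to A[g^{-1}]$ through every inductive step, so that the ``formula over~$f$'' versus ``formula over~$g$'' bookkeeping stays coherent. None of this is hard, which is why the statement is presented as a corollary of Theorem~\ref{thm:workhorse} and Proposition~\ref{prop:identification} rather than as an independent result.
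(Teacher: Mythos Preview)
Your proposal is correct and follows exactly the approach the paper itself takes: its proof is a one-line ``Induction on the structure of~$\varphi$, harnessing the explicit descriptions provided by Theorem~\ref{thm:workhorse} and Proposition~\ref{prop:identification}.'' You have simply spelled out that induction in full, including the two auxiliary observations (reduction to basic opens via monotonicity and locality, and the translation between basic coverings of~$[D(f)]$ and finite algebraic partitions via coherence of~$\Spec(A)$) that the paper leaves implicit.
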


\begin{proof}Induction on the structure of~$\varphi$, harnessing the explicit
descriptions provided by Theorem~\ref{thm:workhorse} and
Proposition~\ref{prop:identification}.\end{proof}

\begin{table}[ht]
  \tbl{A purely algebraic presentation of the sheaf semantics of~$\Spec(A)$}{\begin{tabular}{@{}l@{\ \ }c@{\ \ }l@{}}
    $f \models \top$ &iff& true \\
    $f \models \bot$ &iff& $f$ is nilpotent \\
    $f \models x =_{M^\sim} y$ &iff& $x = y \in M[f^{-1}]$ \\
    $f \models \varphi \wedge \psi$ &iff&
      $f \models \varphi$ and $f \models \psi$ \\
    $f \models \varphi \vee \psi$ &iff&
      there exists a partition~$f^n = fg_1 + \cdots + fg_m$ such that, \\
    &&\quad for each~$i$, $fg_i \models \varphi$ or $fg_i \models \psi$ \\
    $f \models \varphi \Rightarrow \psi$ &iff&
      for all~$g \in A$, $fg \models \varphi$ implies $fg \models \psi$ \\
    $f \models \forall x\?A^\sim\+ \varphi(x)$ &iff&
      for all~$g \in A$ and all $x_0 \in A[(fg)^{-1}]$, $fg \models \varphi(x_0)$ \\
    $f \models \exists x\?A^\sim\+ \varphi(x)$ &iff&
      there exists a partition~$f^n = fg_1 + \cdots + fg_m$ such that, \\
    &&\quad for each~$i$, $fg_i \models \varphi(x_0)$ for some~$x_0 \in A[(fg_i)^{-1}]$
  \end{tabular}}
  \label{table:algebraic-kripke-joyal}
\end{table}

The analogue of Theorem~\ref{thm:basic-properties-sheaf-semantics} for the new
formulation of the sheaf semantics reads as follows.

\begin{theorem}\label{thm:basic-properties-sheaf-semantics-algebraic}
Let~$A$ be a ring. Let~$f \in A$. Let~$\varphi$ be a formula over~$f$.
\begin{enumerate}
\item Let~$g \in A$. If~$f \models \varphi$, then~$fg \models \varphi$.
\item Let~$f^n = fg_1 + \cdots + fg_m$. If~$fg_i \models \varphi$ for all
indices~$i$, then~$f \models \varphi$.
\item Let~$\psi$ be a further formula over~$f$. If~$f \models
\varphi$ and if~$\varphi$ entails~$\psi$ intuitionistically, then also~$f
\models \psi$.
\end{enumerate}
\end{theorem}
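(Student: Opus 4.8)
The plan is to reduce all three statements to the corresponding properties of the genuine sheaf semantics of~$\Spec(A)$, i.e.\ to Theorem~\ref{thm:basic-properties-sheaf-semantics}, by passing through the translation of Corollary~\ref{cor:algebraic-reformulation}. Recall that for a formula~$\varphi$ over~$f \in A$ that corollary gives~$f \models \varphi$ (in the sense of Table~\ref{table:algebraic-kripke-joyal}) if and only if~$[D(f)] \models \varphi$ (in the sense of Table~\ref{table:sheaf-semantics}), where a constant of~$\varphi$ naming an element of~$A[f^{-1}]$ or~$M[f^{-1}]$ is read via the identifications~$A^\sim([D(f)]) \cong A[f^{-1}]$ and~$M^\sim([D(f)]) \cong M[f^{-1}]$ of (the proof of) Proposition~\ref{prop:identification}.

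For part~(1), I would note that~$[D(fg)] = [D(f)] \wedge [D(g)] \preceq [D(f)]$, and that the restriction map~$A^\sim([D(f)]) \to A^\sim([D(fg)])$ corresponds under Proposition~\ref{prop:identification} to the canonical localization map~$A[f^{-1}] \to A[(fg)^{-1}]$ (and similarly for modules); hence the reading of ``$fg \models \varphi$'' matches the reading of ``$[D(fg)] \models \varphi$'' under the convention of Table~\ref{table:sheaf-semantics} by which constants are reinterpreted as their restrictions. So~$f \models \varphi$ yields~$[D(f)] \models \varphi$ by Corollary~\ref{cor:algebraic-reformulation}, then~$[D(fg)] \models \varphi$ by monotonicity (Theorem~\ref{thm:basic-properties-sheaf-semantics}(1)), and finally~$fg \models \varphi$ by Corollary~\ref{cor:algebraic-reformulation} again.

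For part~(2), the point is that the hypothesis~$f^n = fg_1 + \cdots + fg_m$ witnesses~$f \in \sqrt{(fg_1,\ldots,fg_m)}$, so Theorem~\ref{thm:workhorse} gives~$[D(f)] \preceq \bigvee_i [D(fg_i)]$; together with the trivial inclusions~$[D(fg_i)] \preceq [D(f)]$ this exhibits~$[D(f)] = \bigvee_i [D(fg_i)]$ as an open covering. From~$fg_i \models \varphi$ for all~$i$, i.e.\ $[D(fg_i)] \models \varphi$, locality (Theorem~\ref{thm:basic-properties-sheaf-semantics}(2)) gives~$[D(f)] \models \varphi$, hence~$f \models \varphi$ by Corollary~\ref{cor:algebraic-reformulation}. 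Part~(3) is immediate in the same style: $f \models \varphi$ gives~$[D(f)] \models \varphi$; soundness (Theorem~\ref{thm:basic-properties-sheaf-semantics}(3), where the intuitionistic entailment of~$\psi$ from~$\varphi$ may freely invoke the structural axioms---those of a ring for~$A^\sim$, of an~$A^\sim$-module for each~$M^\sim$, and of linearity for the function symbols~$M^\sim \to N^\sim$---all of which hold in the model) gives~$[D(f)] \models \psi$; and Corollary~\ref{cor:algebraic-reformulation} gives~$f \models \psi$.

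I expect the only friction to be bookkeeping rather than mathematics: verifying that the identifications of Proposition~\ref{prop:identification} commute with the restriction maps of~$A^\sim$ and~$M^\sim$, so that the ``reinterpret constants as their restrictions'' convention transports faithfully between the two formulations, and, in part~(2), spelling out that~$f^n = fg_1 + \cdots + fg_m$ places us squarely in the situation covered by Theorem~\ref{thm:workhorse}. Alternatively one could avoid Corollary~\ref{cor:algebraic-reformulation} and prove the three statements directly by induction on the structure of~$\varphi$ following the clauses of Table~\ref{table:algebraic-kripke-joyal}, mirroring the proof of Theorem~\ref{thm:basic-properties-sheaf-semantics} line by line; but routing through the already-established equivalence is shorter.
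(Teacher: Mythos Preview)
Your proposal is correct and follows exactly the approach the paper takes: the paper's proof reads ``Combination of Theorem~\ref{thm:basic-properties-sheaf-semantics} and Corollary~\ref{cor:algebraic-reformulation}, or alternatively direct proof by induction without using any preliminaries from earlier sections of this contribution.'' Your write-up is a careful unpacking of precisely this strategy (including the alternative direct-induction remark), with the additional bookkeeping observations about restriction maps and Theorem~\ref{thm:workhorse} that the paper leaves implicit.
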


\begin{proof}Combination of Theorem~\ref{thm:basic-properties-sheaf-semantics}
and Corollary~\ref{cor:algebraic-reformulation}, or alternatively direct proof
by induction without using any preliminaries from earlier sections of this
contribution.\end{proof}

\paragraph{The field property.} We are now in a position to verify
statement~(\ref{item:field}) of Section~\ref{sect:a-remarkable-sheaf}.

\begin{proposition}\label{prop:field-property}
Let~$A$ be a reduced ring. Then~$A^\sim$ is a field in that
\[ 1 \models \forall x\?A^\sim\+ \bigl((\neg(\exists y\?A^\sim\+ xy = 1)) \Rightarrow
x = 0\bigr). \]
\end{proposition}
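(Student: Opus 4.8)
The plan is to translate the statement into the purely algebraic form of the sheaf semantics from Table~\ref{table:algebraic-kripke-joyal}, available via Corollary~\ref{cor:algebraic-reformulation}, and then read off the arithmetic content. First I would peel off the outer quantifier and implication: by the clauses for $\forall$ and $\Rightarrow$, the assertion $1 \models \forall x\?A^\sim\+((\neg(\exists y\?A^\sim\+ xy = 1)) \Rightarrow x = 0)$ amounts to saying that for all $f, g \in A$ and all $x \in A[f^{-1}]$, if $fg \models \neg(\exists y\?A^\sim\+ xy = 1)$ then $fg \models x = 0$, i.e.\ $x = 0$ in $A[(fg)^{-1}]$. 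Replacing $f$ by $fg$ (and $x$ by its image in $A[(fg)^{-1}]$), this reduces to a single clean claim: \emph{for every $f \in A$ and every $x \in A[f^{-1}]$, if $f \models \neg(\exists y\?A^\sim\+ xy = 1)$, then $x = 0$ in $A[f^{-1}]$.}

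To prove this claim, write $x = a/f^{k}$ with $a \in A$ and $k \in \NN$. The key idea is to feed the specific element $g = a$ into the hypothesis. Unwound via the clause for $\bot$ and the definition of $\neg$, the hypothesis $f \models \neg(\exists y\?A^\sim\+ xy = 1)$ says exactly that for every $g \in A$, if $fg \models \exists y\?A^\sim\+ xy = 1$ then $fg$ is nilpotent. Now in $A[(fa)^{-1}]$ both $a$ and $f$ --- hence $f^{k}$, hence $x = a/f^{k}$ --- are invertible, for the elementary reason that $a\cdot f = fa$ is a unit there. So, using the trivial partition $(fa)^{1} = fa\cdot 1$, one gets $fa \models \exists y\?A^\sim\+ xy = 1$, and the hypothesis yields that $fa$ is nilpotent, say $(fa)^{M} = 0$ in $A$.

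The final step is where reducedness of $A$ enters, and it is the only step that is not pure bookkeeping: from $(fa)^{M} = 0$ we get $(f^{M}a)^{M} = f^{M^{2}-M}(fa)^{M} = 0$, so $f^{M}a$ is nilpotent; since $A$ is reduced, $f^{M}a = 0$ in $A$, hence $a = 0$ in $A[f^{-1}]$ and therefore $x = 0$. I expect the main obstacle to be the disciplined unwinding of the nested $\neg$/$\Rightarrow$/$\exists$ through Table~\ref{table:algebraic-kripke-joyal} without variable-capture slips; the genuine mathematical crux is the tiny computation $(fa)^{M}=0 \Rightarrow (f^{M}a)^{M}=0 \Rightarrow f^{M}a=0$, which is precisely the point at which the hypothesis ``$A$ reduced'' does its work. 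Note that no case distinctions, excluded middle, or choice are used, in accordance with the earlier remark that the argument is fully constructive.
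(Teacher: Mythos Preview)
Your proof is correct and follows essentially the same route as the paper's: unroll the semantics of Table~\ref{table:algebraic-kripke-joyal}, write~$x$ as a fraction with numerator in~$A$, plug that numerator in as the test element to witness invertibility, and use reducedness to pass from nilpotency to vanishing. One small simplification: once~$(fa)^{M}=0$, reducedness of~$A$ already gives~$fa=0$ directly (hence~$a=0$ in~$A[f^{-1}]$), so your detour through~$(f^{M}a)^{M}=0$ is unnecessary---this is exactly how the paper finishes.
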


\begin{proof}By Table~\ref{table:algebraic-kripke-joyal}, we have to verify the
following claim.
\begin{quote}
For any element~$f \in A$ and any element~$x \in A[f^{-1}]$, \\
${\qquad}$ for any element~$g \in A$, \\
${\qquad\qquad}$ if for any element~$h \in A$, \\
${\qquad\qquad\qquad}$ if there is~$(fgh)^n = \sum_{i=1}^m fghp_i$
such that for each~$i$, \\
${\qquad\qquad\qquad\qquad}$ there exists~$y \in
A[(fghp_i)^{-1}]$ such that \\
${\qquad\qquad\qquad\qquad\qquad}$ $xy = 1$ in~$A[(fghp_i)^{-1}]$, \\
${\qquad\qquad\qquad}$ then~$fgh$ is nilpotent, \\
${\qquad\qquad}$ then~$x = 0$ in~$A[(fg)^{-1}]$.
\end{quote}
Hence let elements~$f \in A$, $x \in A[f^{-1}]$ and~$g \in A$ be given and
assume that for any~$h \in A$, the displayed condition holds. We are to show
that~$x = 0$ in~$A[(fg)^{-1}]$.

Write~$x = \frac{x'}{f}$. For~$h \defeq x'$, we have the
partition~$(fgh)^1 = fgh \cdot 1$ with~$m = 1$ and~$p_1 = 1$. For~$y \defeq
\frac{f}{x'} = \frac{f^2gp_1}{fghp_1}$, we have~$xy = 1$ in~$A[(fghp_1)^{-1}]$.
By assumption, the element~$fgh$ is nilpotent, hence zero. Thus~$x' = 0$
in~$A[(fg)^{-1}]$. This implies that~$x = 0$ in~$A[(fg)^{-1}]$.
\end{proof}

The proof of Proposition~\ref{prop:field-property} visibly demonstrates that
unwinding the definition of the sheaf semantics quickly results in formulas
which are quite convoluted. This behavior is a basic consequence of the fact
that, with the exception of the clauses for~``$\top$'' and~``$\wedge$'', all
clauses in Table~\ref{table:algebraic-kripke-joyal} introduce additional
quantifiers.

This increase in complexity should be properly appreciated: The sheaf semantics
allows us to harness convoluted properties in simple language. The unrolled
monstrosity in the proof of Proposition~\ref{prop:field-property} is not
memorable at all and unlikely to find its way into human-written proofs.
However, in the form~``$A^\sim$ is a field'', it becomes easily accessible.
This state of affairs becomes the more pronounced the more negations and especially
double negations occur in the proof.


\begin{example}\label{ex:negneg-interpretation-unrolled}
For future reference, and also for giving a different
perspective on statement~(\ref{item:negneg-stable}), we unroll here the
interpretation of~$f \models \neg\neg\varphi$. It is:
\begin{quote}
For any element~$g \in A$, \\
${\qquad}$ if for any element~$h \in A$, \\
${\qquad\qquad}$ if~$fgh \models \varphi$, \\
${\qquad\qquad}$ then~$fgh$ is nilpotent, \\
${\qquad}$ then~$fg$ is nilpotent.
\end{quote}
In the case that~$A$ is reduced and~$f$ is the unit element of~$A$, this
condition implies (by setting~$g \defeq 1$): ``If~$h = 0$ is the only element of~$A$
such that~$h \models \varphi$, then~$1 = 0$ in~$A$.''
\end{example}

\subsection{Evaluating test cases}
\label{sect:example-applications}

\paragraph{Size of injective and surjective matrices.}

On page~\pageref{par:appl-constr-alg}, we presented a classical proof of the
following theorem.\par
\begin{quote}
\textbf{Theorem.} Let~$M$ be an injective matrix with more columns than rows
over a reduced ring~$A$. Then~$1 = 0$ in~$A$.
\end{quote}
We are now in a position to show how the reduction technique developed in
Section~\ref{sect:appl} can be used to recast the classical proof so that it
becomes constructive:

\begin{quote}
\textbf{Proof.} Since localization is exact, the matrix~$M$ is also injective when
considered as a matrix over~$A^\sim$ (statement (\ref{item:loc-stable})).
Since~$A^\sim$ is a field, this is a contradiction to basic (intuitionistic)
linear algebra.\footnote{We are referencing the following theorem: ``Let~$M$ be
an injective matrix with more columns than rows over a field~$k$ in the sense
that any element of~$k$ is either zero or invertible. Then~$\bot$.'' This
theorem is proven by repeatedly applying elementary row and column
transformations until the matrix is of the
form~$\left(\begin{smallmatrix}I&0\\0&0\end{smallmatrix}\right)$. By assumption
on the size of~$M$, there is at least one zero column. This contradicts
injectivity.

However, since the ring~$A^\sim$ is only a field in the weaker sense that
nonunits are zero, this theorem does not apply directly. We only have that any
element of~$A^\sim$ is \notnot zero or invertible. We hence employ the
trick described in the section on double negation stability
(page~\pageref{par:double-negation-stability}).}
In other words, $1 \models \bot$. By
Remark~\ref{rem:consistency} or Table~\ref{table:algebraic-kripke-joyal}, this amounts to~$1 = 0$ in~$A$.\qed
\end{quote}

If desired, the proof can be mechanically unwound to a fully explicit proof
which does not refer to the sheaf semantics. In this case, unwinding yields
exactly the proof by Fred Richman~\cite[Theorem~2]{richman:trivial-rings} (only
that he needs one additional paragraph to deal with the possibility that the
ring is not reduced, which we for conciseness simply assumed):

\begin{quote}
\textbf{Proof.} Over the localized
ring~$A[M_{ij}^{-1}]$, the matrix element~$M_{ij}$ is invertible and hence~$M$ can be
put into the form~$\left(\begin{smallmatrix}1&0\\0&M'\end{smallmatrix}\right)$
by elementary row and column transformations. The matrix~$M'$ is injective and
has more columns than rows, hence by induction, applied to this submatrix
over~$A[M_{ij}^{-1}]$, we have~$1 = 0$ in~$A[M_{ij}^{-1}]$. Since~$A$ is
reduced, this amounts to~$M_{ij} = 0$ in~$A$.

Hence~$M$ is the zero matrix. Thus~$M(1,0,\ldots,0)^t = 0$ and by
injectivity~$1 = 0$ in~$A$. \qed
\end{quote}

In exactly the same fashion, the dual statement about surjective matrices can
be proven.

Concluding, one can well ask what the computational content of the equation~$1
= 0$ is. The answer depends on the ring in question. In rings where we know a
priori that~$1 \neq 0$, the two statements are most useful in their
contrapositive form. But in rings of the form~$R[f^{-1}]$ or~$R/(f)$, learning
that~$1 = 0$ can indeed be useful: In the former case, this tells us that~$f$
is nilpotent (and a computational witness of~$1 = 0$ can be transformed into a
number~$n$ such that~$f^n = 0$), and in the latter case, this tells us that~$f$
is invertible.

\paragraph{Grothendieck's generic freeness lemma.} This theorem is a basic
theorem in algebraic geometry, used for instance for developing the theory of
moduli spaces. When phrased for the affine situation in a constructively
sensible way, a simple version of this theorem reads as follows. We recall
from Section~\ref{sect:algebraic-preliminaries} that
the notation~$M[f^{-1}]$ makes sense for any element~$f \in A$; in case~$f$ is
zero or nilpotent, the localized module~$M[f^{-1}]$ will be the trivial module.\par
\begin{quote}
\textbf{Theorem.}
Let~$A$ be a reduced ring. Let~$M$ be a finitely generated~$A$-module.
If~$f = 0$ is the only element of~$A$ such that~$M[f^{-1}]$ is a
free~$A[f^{-1}]$-module, then~$1 = 0$ in~$A$.
\end{quote}
This theorem is particularly interesting for the program of constructivizing
classical commutative algebra, as it is an example where the constructive proof
is much shorter and arguably more perspiciuous than classical proofs (for which
see for instance Refs.~\cite[Lemme~6.9.2]{ega-4-2},
\cite[Thm.~24.1]{matsumura:commutative-ring-theory} or
\cite[Thm.~14.4]{eisenbud:commutative-algebra}, which only cover the case of
Noetherian integral domains, or Refs.~\bracketedrefcite{staats:generic-freeness},
\cite[Tag~051Q]{stacks-project}
which proceed in a series of intermediate steps, reducing to that case).\par
\begin{quote}
\textbf{Proof.} By Example~\ref{ex:negneg-interpretation-unrolled} and
Exercise~\ref{ex:bridging-modules}(\ref{item:free}), the claim amounts to the statement that~$M^\sim$ is \notnot
free. Since~$A^\sim$ is a field, this statement follows from
basic (intuitionistic) linear algebra.\footnote{A standard result of linear
algebra is: ``Any finitely generated vector space over a field is finite free
(that is, has a finite basis).'' The proof proceeds by considering a given
generating family and using the law of excluded middle to determine whether one
of the generators can be expressed using the others or not. In the latter case,
the family is linearly independent and hence a basis; in the former case, the
proof removes the redundant generator and continues by induction.

Intuitionistically, we only have the double negation of the law of excluded
middle available; this is why intuitionistically we only have that finitely
generated vector spaces are \notnot finite free. The field condition required
by the proof is exactly the one satisfied by~$A^\sim$.}\qed\end{quote}
Again, the proof can be unwound if
desired~\cite[Proposition~3]{blechschmidt:generic-freeness}:

\begin{quote}\textbf{Proof.}
We proceed by induction on the length of a given generating family
of~$M$. Let~$M$ be generated by~$(v_1,\ldots,v_m)$.

We show that the family~$(v_1,\ldots,v_m)$ is linearly independent. Let~$\sum_i
a_i v_i = 0$. Over~$A[a_i^{-1}]$, the vector~$v_i \in M[a_i^{-1}]$ is a linear
combination of the other generators. Thus~$M[a_i^{-1}]$ can be generated as
an~$A[a_i^{-1}]$-module by fewer than~$m$ generators. The induction hypothesis,
applied to this module over~$A[a_i^{-1}]$, yields that~$1 = 0$ in~$A[a_i^{-1}]$. Since~$A$ is
reduced, this amounts to~$a_i = 0$.

We finish by using the assumption for~$f = 1$. \qed
\end{quote}
The full version of Grothendieck's generic freeness lemma, and a constructive
proof of it, read as follows.\par
\begin{quote}
\textbf{Theorem.}
Let~$A$ be a reduced ring. Let~$B$ be an~$A$-algebra of finite type. Let~$M$ be
a finitely generated~$B$-module.
If~$f = 0$ is the only element of~$A$ such that
\begin{enumerate}
\item $B[f^{-1}]$ and $M[f^{-1}]$ are free modules over $A[f^{-1}]$,
\item $A[f^{-1}] \to B[f^{-1}]$ is of finite presentation and
\item $M[f^{-1}]$ is finitely presented as a module over~$B[f^{-1}]$,
\end{enumerate}
then~$1 = 0$ in~$A$.

\textbf{Proof.} One can check that the claim is the interpretation by the sheaf
semantics of the following statement about the sheaves: It is \notnot the case that
\begin{enumerate}
  \item $B^\sim$ and $M^\sim$ are free modules over $A^\sim$,
  \item $A^\sim \to B^\sim$ is of finite presentation, and
  \item $M^\sim$ is finitely presented as a module over $B^\sim$,
\end{enumerate}

Item~(1) is by basic linear algebra as in the proof of
simple version of Grothendieck's generic freeness lemma, but a bit more involved as we now need to
deal with countable generating
families~\cite[Theorem~11.16]{blechschmidt:phd}. Item~(2) is because finitely
generated algebras over anonymously Noetherian rings are \notnot finitely
presented (in a description of the form~$R[X_1,\ldots,X_n]/\aaa$, the
ideal~$\aaa$ is \notnot finitely generated).
Item~(3) is because finitely generated modules over anonymously Noetherian are
\notnot finitely presented.\qed
\end{quote}
A variant of this proof where the reference to the Noetherian condition has
been replaced by an explicit computation has been unwound in
Ref.~\bracketedrefcite{blechschmidt:generic-freeness}.

\paragraph{McCoy's theorem.}\label{par:mccoy}
It is a familiar fact from linear algebra that an injective~$(n \times
m)$-matrix~$M$ over a geometric field has at least one invertible~$m$-minor.
Like many proofs in linear algebra, the proof proceeds by first applying
elementary row and column transformations to put~$M$ into the form~$N \defeq
\left(\begin{smallmatrix}I&0\\0&M'\end{smallmatrix}\right)$ where all entries
in the submatrix~$M'$ are zero. This step exploits
that any element is zero or invertible. Since~$M$ is injective, the number of columns of~$M'$ is zero.
Hence~$N = \left(\begin{smallmatrix}I\\0\end{smallmatrix}\right)$ has an
invertible~$m$-minor, and thus~$M$ has well.

A version of \emph{McCoy's theorem} states that this fact
generalizes to arbitrary rings:

\begin{quote}
\textbf{Theorem.} Let~$M \in A^{n \times m}$ be an injective matrix.
Then~$(\Lambda^m M)$, the ideal of~$m$-minors of~$M$, is regular.
\end{quote}

The sheaf-theoretic reduction technique allows us to give a constructive proof
of McCoy's theorem which is close to the linear algebra proof. In case that the
ring~$A$ is reduced, we can even copy the original proof almost word for word;
for the general case, we require the following lemma.

\begin{quote}
\textbf{Lemma.} Let~$M \in A^{n \times m}$ be an injective matrix with~$m > 0$. Assume that all entries
of~$M$ are nilpotent. Then~$1 = 0$ in~$A$.

\textbf{Proof of the lemma.} Let~$(M)$ be the ideal generated by the entries
of~$M$. The assumption implies that there is a exponent~$K \in \NN$ such that~$(M)^K = (0)$.
If~$K \geq 1$, then also~$(M)^{K-1} = (0)$, since for~$x \in (M)^{K-1}$ the
entries of~$M (x,0,\ldots,0)^t$ are elements of~$(M)^K$, hence zero,
and~$M$ is injective. We may thus assume~$K = 0$. So~$1 \in (M)^0 = (0)$. \qed
\end{quote}
Because we want to work in the generality of
a ring which might not be reduced, we cannot use that~$A^\sim$ is a field.
Instead we will employ the generalization established in
Exercise~\ref{ex:gen-field-property}: Any nonunit of~$A^\sim$ is nilpotent. The
proof then proceeds as follows.\par
\begin{quote}
\textbf{Proof of the theorem.} Let~$x \in A$ be such that~$x (\Lambda^m M) = (0)$. We are to
show~$x = 0$. By passing to the factor ring~$A/\ann(x)$, we may
assume~$(\Lambda^m M) = (0)$ and~$x = 1$; the new goal is to verify~$1 = 0$. We
do this by validating~$\bot$ in the sheaf semantics.

We may employ the law of excluded middle a finite number of times because we
are to verify~$\bot$. By repeatedly exploiting that any matrix entry is
invertible or not, we may use row and column transformations to put~$M$ into
the form~$N \defeq \left(\begin{smallmatrix}I&0\\0&M'\end{smallmatrix}\right)$ where all
entries of~$M'$ are not invertible. Since nonunits of~$A^\sim$ are nilpotent, the entries of~$M'$ are
nilpotent, and since~$M'$ is injective and since~$1 \neq 0$, the preceding lemma
shows that the number of columns of~$M'$ is zero.

Hence~$N = \left(\begin{smallmatrix}I\\0\end{smallmatrix}\right)$ and~$(1) =
(\Lambda^m N) = (\Lambda^m M) = (0)$, a contradiction. \qed
\end{quote}

\begin{remark}Not of pragmatic, but of logical interest is that the proof can
also be recast as follows. Instead of reducing to the factor ring~$A/\ann(x)$,
we continue considering~$A$. Working under the sheaf semantics, we then
introduce a variant of the usual negation~$\neg\varphi \equiv (\varphi
\Rightarrow \bot)$, namely~$\negg\varphi \equiv (\varphi \Rightarrow x = 0)$.
We may still use the law of excluded middle for this new negation and hence
exploit that any element~$f$ of~$A^\sim$ is invertible or ``not'' invertible. By
Exercise~\ref{ex:gen-field-property}, this amounts to saying that some power
of~$f$ annihilates~$x$. The remainder of the proof carries over with minor
modifications.
\end{remark}

As with the other examples, the proof can mechanically be unwound to obtain a proof
which is fully explicit:

\begin{quote}
\textbf{Proof of the theorem.} Let~$x \in A$ be such that~$x (\Lambda^m M) = (0)$. We are to
show~$x = 0$. By passing to the factor ring~$A/\ann(x)$, we may
assume~$(\Lambda^m M) = (0)$ and~$x = 1$; the new goal is to verify~$1 = 0$.

If~$m = 0$, the claim is immediate because in this case~$(1) = (\Lambda^m M) =
(0)$. So assume~$m > 0$.

Over the localized ring~$A[M_{ij}^{-1}]$, the matrix entry~$M_{ij}$ is
invertible and hence~$M$ can be put into the
form~$\left(\begin{smallmatrix}1&0\\0&M'\end{smallmatrix}\right)$ by elementary
row and column transformations. The matrix~$M'$ is still injective,
and~$(\Lambda^{m-1} M') = (\Lambda^m M)$, hence we may assume by induction
that~$1 = 0$ in~$A[M_{ij}^{-1}]$.

Hence all entries of~$M$ are nilpotent and the claim follows from the preceding
lemma. \qed
\end{quote}

Other short and explicit proofs of McCoy's theorem are given in
Refs.~\bracketedrefcite{blocki:mccoy},
\cite[Theorem~5.22]{lombardi-quitte:constructive-algebra} and
\cite[Exercise~5.23A(3)]{lam:exercises}. The proof in
Ref.~\cite[Proposition~III.8.2.3]{bourbaki:alg} employs the law of excluded
middle. We were not able to discern that these proofs are just different
presentations of the same idea that ours is based on; these proofs all employ
computations with Cramer-like formulas, while our proof stresses the similarity
to the field case.

It is on purpose that the (sheaf-theoretic version of) our proof contains case
distinctions, to closely mimic the proof in the field case. It is also
possible to do without, as already our unrolled proof demonstrates. Thierry
Coquand and Claude Quitté gave a beautiful proof without case distinctions by
applying a local--global
principle~\cite[Theorem~2.4]{coquand-quitte:constructive-finite-free-resolutions};
if desired, their proof can also be cast in the sheaf-theoretic framework
(Exercise~\ref{ex:local-global}).

\subsection*{Exercises}
\addcontentsline{toc}{subsection}{Exercises}

\begin{exercise}[Noetherian properties of the integers]%
\begin{enumerate}
\item Show that the law of excluded middle holds if the ring~$\ZZ$ of integers
is Noetherian in the sense that any ideal is finitely generated.\smallskip

{\scriptsize\emph{Hint.} For a truth value~$\varphi$, consider the ideal~$\{ x
\in \ZZ \,|\, x = 0 \vee \varphi \}$.\par}
\item Give an example of a space~$X$ and a subsheaf~$F$ of the constant
sheaf~$\underline{\ZZ}$ such that, under the sheaf semantics, the subsheaf~$F$
is an ideal, and such that it is not the case that, again under the sheaf
semantics, the ideal~$F$ is finitely generated.
\item Give a proof that~$\ZZ$ is anonymously Noetherian which, unlike the proof
given in Example~\ref{ex:z-noeth}, uses the anonymous version of the least
number principle of Exercise~\ref{ex:least-number-principle}.
\end{enumerate}
\end{exercise}

\begin{exercise}[Free fields]%
\begin{alphlist}[(b)]
\item A \emph{free field} over a set~$M$ would be
a field~$K$ together with a map~$M \to K$ such that any map~$M \to L$ into a
field~$L$ factors
over the given map~$M \to K$ by a unique field homomorphism~$K \to L$. Prove
that free fields exist for no set~$M$.
\item Let~$k$ be a field. Devise a notion of
\emph{free~$k$-fields} (where a~$k$-field is a field together with a
homomorphism from~$k$) and show that the only set~$M$ such that there is a
free~$k$-field over~$M$ is the empty set.
\end{alphlist}
\end{exercise}

\begin{exercise}[Nontriviality of~$\boldsymbol{A^\sim}$]%
Let~$A$ be a ring. In Proposition~\ref{prop:immediate-consequences}, we gave an
abstract proof that~$A^\sim$ is (local and hence) nontrivial in the sense that~$1
\neq 0$. This proof did not need to assume that~$A$ itself is nontrivial. Give a new proof of this
fact, by verifying directly~$1 \models \neg(1 =_{A^\sim} 0)$ using the
semantics given in Table~\ref{table:algebraic-kripke-joyal}.
\end{exercise}

\begin{exercise}[A generalization of the field property]%
\label{ex:gen-field-property}%
Let~$A$ be a ring. We extend the sheaf semantics to infinitary first-order
formulas by adding the following clauses to
Table~\ref{table:algebraic-kripke-joyal}:
\begin{center}\tablefont
\begin{tabular}{@{}l@{\ \ }c@{\ \ }l@{}}
  $f \models \bigwedge_{k \in K} \varphi_k$ &iff&
    for all indices~$k \in K$, $f \models \varphi_k$ \\
  $f \models \bigvee_{k \in K} \varphi_k$ &iff&
    there exists a partition~$f^n = fg_1 + \cdots + fg_m$ such that, \\
  &&\quad for each~$i$, there is an index~$k \in K$ such that~$fg_i \models \varphi_k$
\end{tabular}
\end{center}
Theorem~\ref{thm:basic-properties-sheaf-semantics-algebraic}
also applies to this extension, and moreover, this extension in sound with
respect to infinitary intuitionistic logic.
\begin{alphlist}[(c)]
\item Prove that:
\[ 1 \models \forall f\?A^\sim\+ \forall x\?A^\sim\+
  \bigl((\exists z\?A^\sim\+ fz = 1) \Rightarrow x = 0\bigr) \Longrightarrow \bigvee_{n \in \NN} (f^n x = 0). \]
\item Deduce that~$A^\sim$ is ``almost a field'' in the sense that any nonunit
is nilpotent.
\item Show that~$A$ is reduced iff~$A^\sim$ is.
\end{alphlist}
\end{exercise}

\begin{exercise}[A local--global principle]%
\label{ex:local-global}%
Let~$A$ be a ring. We introduce an operator~$\nabla$ by
\[ \nabla\varphi \defeqv \bigl(\forall s\?A^\sim\+ ((\varphi \Rightarrow s = 0)
\Rightarrow s = 0\bigr).\]
\begin{alphlist}[(f)]
\item Verify that~$\nabla$ is a \emph{local operator}, that is that
\[ (\varphi \Rightarrow \nabla\varphi), \quad
(\nabla\nabla\varphi \Rightarrow \nabla\varphi) \quad\text{and}\quad
((\nabla\varphi \wedge \nabla\psi) \Leftrightarrow \nabla(\varphi \wedge \psi))
\]
hold under the sheaf semantics.
\item Show that equality is~$\nabla$-stable in that
\[ 1 \models \forall x\?A^\sim\+ \forall y\?A^\sim\+ \nabla(x = y) \Rightarrow x = y. \]

{\scriptsize\emph{Note.} If~$A$ is reduced, then~$\nabla$ coincides
with~$\neg\neg$ and the claim reduces to statement~(\ref{item:negneg-stable})
on page~\pageref{item:negneg-stable}. In general, the operator~$\nabla$ is
related to the notion of \emph{scheme-theoretic dense open
subsets}~\cite[Lemma~9.11]{blechschmidt:phd}.\par}
\item Verify that an element of~$A^\sim$ is regular iff it
is~$\nabla$-invertible, that is:
\[ 1 \models \forall f\?A^\sim\+
  \bigl(
    (\forall s\?A^\sim\+ fs = 0 \Rightarrow s = 0) \Longleftrightarrow
    \nabla(\exists g\?A^\sim\+ fg = 1)
  \bigr)
\]
\item Verify the analogous statement for finitely generated ideals:
\begin{multline*}1 \models \forall f_1\?A^\sim\+ \ldots \forall f_n\?A^\sim\+ \\
  \bigl(
    (\forall s\?A^\sim\+ (\textstyle\bigwedge_{i=1}^n f_is = 0) \Rightarrow s = 0) \Longleftrightarrow
    \nabla(1 \in (f_1,\ldots,f_n))
  \bigr) \end{multline*}
\item Explain how the local--global principle ``if~$(f_1,\ldots,f_n)$ is a
regular ideal and if an ideal~$(g_1,\ldots,g_m)$ is regular when considered
over each localization~$A[f_i^{-1}]$, then~$(g_1,\ldots,g_m)$ is regular
over~$A$'' can be viewed as an instance of modus ponens for local operators
($(\nabla\varphi \wedge (\varphi \Rightarrow \nabla\psi)) \Longrightarrow
\nabla\psi$).
\item Fill in the details, unwind the following proof of McCoy's theorem
(page~\pageref{par:mccoy}) and compare the result with the proof in
Ref.~\cite[Theorem~2.4]{coquand-quitte:constructive-finite-free-resolutions}.

\begin{quote}
\textbf{Theorem.} Let~$M \in A^{n \times m}$ be an injective matrix.
Then~$(\Lambda^m M)$, the ideal of~$m$-minors of~$M$, is regular.

\textbf{Proof.} We verify that, under~$\nabla$, there exists an
invertible~$m$-minor, that is we verify~$\nabla(\bigvee_f (\exists g\?A^\sim\+
fg=1))$, where the disjunction is over the~$m$-minors of~$M$.

Since~$M$ is injective, the ideal generated by the first column is regular,
hence under~$\nabla$ the unit ideal. Since our goal is~$\nabla$-stable, we may assume
that it actually is the unit ideal. Hence one of the entries in the first
column is invertible. Applying elementary row and column transformations, we
can put~$M$ into the
form~$\left(\begin{smallmatrix}1&0\\0&M'\end{smallmatrix}\right)$. Continuing
in this fashion, we obtain the
form~$\left(\begin{smallmatrix}I\\0\end{smallmatrix}\right)$. This matrix
obviously has an invertible~$m$-minor, and hence the original one has one as
well.\qed
\end{quote}
\end{alphlist}
\end{exercise}

\begin{exercise}[Bridging modules and their induced sheaves]%
\label{ex:bridging-modules}%
Let~$M$ be a module over a ring~$A$.
\begin{alphlist}[(c)]
\item Show that~$M$ is finitely generated iff~$M^\sim$ is.
\item\label{item:free} Show that~$M$ is finite locally free (that is, that there is a
partition~$1 = f_1 + \cdots + f_n$ such that each module~$M[f_i^{-1}]$ is
finite free over~$A[f_i^{-1}]$) iff~$M^\sim$ is finite free.
\item Show that~$M$ is finitely presented iff~$M^\sim$ is.
\end{alphlist}
\end{exercise}

\begin{exercise}[Local injectivity and surjectivity]%
Let~$M$ and~$N$ be modules over a ring~$A$. Let~$f : M \to N$ be a linear map.
\begin{alphlist}[(b)]
\item Show that~$f$ is injective iff~$f^\sim : M^\sim \to N^\sim$ is injective.
\item Show that~$f$ is surjective iff~$f^\sim : M^\sim \to N^\sim$ is surjective.
\end{alphlist}
\end{exercise}

\begin{exercise}[Units of the sheaf model]%
Let~$A$ be a ring. Let~$f \in A$. Let~$x \in A[f^{-1}]$. Show using
Exercise~\ref{ex:global-existence} that~$f \models \exists y\?A^\sim\+ xy
= 1$ if and only if~$x$ is invertible in~$A[f^{-1}]$.
\end{exercise}

\begin{exercise}[Decidability of invertibility]%
Show that it is not the case that any element of~$\ZZ^\sim$ is invertible or
not invertible.\smallskip

{\noindent\scriptsize\emph{Note.} Since~$\ZZ^\sim$ is a (sheaf) model of the
theory of local rings, this observation shows that the theory of
local rings does not prove that invertibility is decidable. Using much the same
technique, this result has also been proven by Thierry
Coquand~\cite{coquand:local-rings}.

More generally, one can show: A ring~$A$ is of Krull dimension~$\leq n$
iff~$A^\sim$ is; and a local ring (such as~$A^\sim$) is of Krull
dimension~$\leq 0$ iff any element is invertible or nilpotent. Hence
invertibility of elements of~$A^\sim$ is decidable iff~$A$ is of Krull
dimension~$\leq 0$. In this case, up to a technical condition, the universe of
sheaves over~$\Spec(A)$ is classical, hence any statement whatsoever is
decidable. \emph{The ring controls the logic:} Assuming that the metatheory is
classical and assuming the technical condition, the universe of sheaves
over~$\Spec(A)$ is governed by classical logic iff~$A^\sim$ is of Krull
dimension~$\leq 0$.

The for these purposes correct constructive notion of Krull dimension can be
found in Refs.~\bracketedrefcite{coquand-ducos-lombardi-quitte:krull-integral,coquand-lombardi-roy:char-krull}.
It has been used to give a short and constructive proof that~$\dim
k[X_1,\ldots,X_n] = n$, where~$k$ is a geometric
field~\cite{coquand-lombardi:krull-dim-polynomial-ring}. Proofs of the stated
relation between the dimension of~$A$ and the dimension of~$A^\sim$ are recorded in
Ref.~\cite[Section~3.4]{blechschmidt:phd}.\par}
\end{exercise}

\begin{exercise}[Triviality of the spectrum]%
Let~$A$ be a ring. Verify, without assuming~\BPIT, that~$\Spec(A)$ is
isomorphic to the one-point locale iff any element of~$A$ is either
invertible or nilpotent.\smallskip

{\noindent\scriptsize\emph{Hint.} Use the description of the unique map~$\OOO(\pt) \to
\OOO(\Spec(A)),\,\varphi \mapsto \bigvee\{\top\,|\,\varphi\}$ computed in
part~(\ref{item:join-top}) of Exercise~\ref{ex:frame-of-radical-ideals}.\par}
\end{exercise}

\begin{exercise}[Locality of the spectrum]%
Let~$A$ be a ring. Show that the spectrum of~$A$ is local in the sense of
Exercise~\ref{ex:local-locales} if and only if~$A$ is local as a ring.
\end{exercise}

\begin{exercise}[Spatiality of the spectrum]%
Let~$A$ be a ring.
\begin{alphlist}[(c)]
\item Verify that~$\Spec(A)$ is spatial if and only if for any element~$x \in
A$ and any ideal~$\aaa \subseteq A$,
\[ x \in \sqrt{\aaa} \quad\textnormal{if}\quad
\text{for all prime filters~$\fff \subseteq A$ with~$x \in \fff$, $\fff \between \aaa$}, \]
where the symbol~``$\between$'' denotes that the two sets have an element in common.
\item Assuming the law of excluded middle, show that~$\Spec(A)$ is spatial if
and only if for any ideal~$\aaa \subseteq A$ the familiar identity
\[ \sqrt{\aaa} = \bigcap_{\ppp \supseteq \aaa} \ppp \]
holds.
\item Conclude that the spectrum of any ring is spatial iff~\BPIT holds.
\end{alphlist}
\end{exercise}

\begin{exercise}[Prüfer domains as valuation domains]%
An \emph{integral domain} is a ring such that~$1 \neq 0$ and such that~$xy = 0$
implies~$x = 0$ or~$y = 0$. A \emph{valuation domain} is an integral domain
such that for any two elements, one divides the other. A \emph{Prüfer domain}
is an integral domain such that any finitely generated ideal~$\aaa$ is locally a
principal ideal (in the sense that there exists a partition~$1 = f_1 + \cdots +
f_n$ such that, for each index~$i$, the ideal~$\aaa[f_i^{-1}]$ is a principal
ideal in~$A[f_i^{-1}]$).
\begin{alphlist}[(d)]
\item Let~$A$ be a ring. Show that~$A^\sim$ is an integral domain if~$A$ is.
Does the converse hold?
\item Let~$A$ be a valuation domain. Show that any matrix over~$A$ can be put
into diagonal form by elementary row and column operations.
\item Let~$A$ be an integral domain. Show that~$A$ is a Prüfer domain if and
only if~$A^\sim$ is a valuation domain.
\item Let~$A$ be a Prüfer domain. Show that any matrix over~$A$ can locally be
put into diagonal form by elementary row and column operations, by applying the
result of part~(b) to~$A^\sim$.
\end{alphlist}
\end{exercise}

\begin{exercise}[A basic version of Kaplansky's theorem]%
\begin{alphlist}[(c)]
\item Let~$A$ be a local ring. Let~$\aaa \subseteq A$ be a finitely generated
ideal such that~$\aaa^2 = \aaa$. Show that~$\aaa = (0)$ or~$\aaa = (1)$.\smallskip

{\scriptsize\emph{Hint.} Nakayama's lemma.\par}
\item Let~$A$ be a local ring. Let~$M \in A^{n \times n}$ be an idempotent
matrix. Verify that~$M$ is similar to a diagonal matrix with entries zero and
one by applying part~(a) to the ideals of~$k$-minors of~$M$. Deduce that the
cokernel of~$M$ is finite free.
\item Let~$A$ be an arbitrary ring. Let~$M \in A^{n \times n}$ be an idempotent
matrix. Show that the cokernel of~$M$ is finite locally free, by applying the
result of part~(b) to~$A^\sim$.
\item Verify, without using the law of excluded middle or that~$A$ is
Noetherian, that an~$A$-module~$M$ is finitely generated and projective if and
only if it is finite locally free.\smallskip

{\scriptsize\emph{Note.} A self-contained solution is given in
Ref.~\bracketedrefcite{blechschmidt:kaplansky}.\par}
\end{alphlist}
\end{exercise}

\section*{Acknowledgments}

We are grateful to Peter Arndt, Andrej Bauer,
Martin Brandenburg, Thierry Coquand, Martín Escardó, Simon Henry, Matthias
Hutzler, Milly Maietti, Marc Nieper-Wißkirchen, Alexander Oldenziel, Anja
Petković, Peter Schuster, Helmut Schwichtenberg, Steve Vickers and Daniel
Wessel for invaluable discussions shaping this work. We are also grateful to
the anonymous referee, whose comments suggested several improvements to the
presentation.

We thank the
organizers Klaus Mainzer, Peter Schuster and Helmut Schwich\-ten\-berg of
\emph{Proof and Computation 2019}, where this work was presented, for their
kind invitation and for creating a beautiful and stimulating meeting. We also
express our thanks to the local coordinator Chuangjie Xu and to all
participants of that meeting.

}

\bibliographystyle{ws-rv-van}
\bibliography{generalized-spaces}

\end{document}